\numberwithin{equation}{section}
\newtheorem{Theorem}{Theorem}[section]
\newtheorem{Corollary}[Theorem]{Corollary}
\newtheorem{Lemma}[Theorem]{Lemma}
\theoremstyle{definition}
\newcommand{\spacetime}{\mathbb{R}^{1,3}}
\newcommand{\term}[1]{{\bf\em #1}}
\newcommand{\deSitter}{\textrm{d}\mathbb{S}^3}
\newcommand*{\email}[1]{%
    \normalsize\href{mailto:#1}{#1}\par
    }
\title{A proof of the Koebe-Andre'ev-Thurston theorem via flow from tangency packings}
\author{John C.~Bowers}
\affil{Department of Computer Science\\ James Madison University \\ Harrisonburg, VA 22807, USA\\ \email{bowersjc@jmu.edu}} 
\begin{document}

\maketitle

\begin{abstract}\noindent
Recently, Connelly and Gortler gave a novel proof of the circle packing theorem for tangency packings by introducing a hybrid combinatorial-geometric operation, flip-and-flow, that allows two tangency packings whose contact graphs differ by a combinatorial edge flip to be continuously deformed from one to the other while maintaining tangencies across all of their common edges. Starting from a canonical tangency circle packing with the desired number of circles a finite sequence of flip-and-flow operations may be applied to obtain a circle packing for any desired (proper) contact graph with the same number of circles.

In this paper, we extend the Connelly-Gortler method to allow circles to overlap by angles up to $\pi/2$. As a result, we obtain a new proof of the general Koebe-Andre'ev-Thurston theorem for disk packings on $\mathbb{S}^2$ with overlaps and a numerical algorithm for computing them. Our development makes use of the correspondence between circles and disks on $\mathbb{S}^2$ and hyperplanes and half-spaces in the 4-dimensional Minkowski spacetime $\spacetime$, which we illuminate in a preliminary section. Using this view we generalize a notion of convexity of circle polyhedra that has recently been used to prove the global rigidity of certain circle packings. Finally, we use this view to show that all convex circle polyhedra are infinitesimally rigid, generalizing a recent related result. 
\end{abstract}

\section{Introduction}

The celebrated Koebe-Andre'ev-Thurston theorem (KAT) states that, given a triangulation $T$ of a topological sphere together with an assignment of desired overlap angles in $[0, \pi/2]$ for each edge of the triangulation (subject to a few mild conditions), there exists a pattern of disks on the unit sphere $\mathbb{S}^2$, one disk for each vertex, such that the angle of overlap between two circles that correspond to an edge matches the desired overlap angle for that edge. Furthermore, connecting centers of neighboring disks along geodesic shortest paths produces a geodesic triangulation of $\mathbb{S}^2$ that is isomorphic with $T$, and, moreover, the disk pattern obtained by the theorem is unique up to Möbius transformations of $\mathbb{S}^2$ to itself. When all overlap angles are 0 then neighboring disks are tangent and the restricted version of this theorem is called the circle packing theorem. 

Numerous proofs of the circle packing theorem have appeared in the literature, for example \cite{pK36,Thurston:1980,MR90,BSt90,CdV91a,pB93,kS05}. Fewer proofs of the more general KAT theorem have appeared. Andre'ev had a proof \cite{Andreev:1970a,Andreev:1970b} containing errors that was later fixed by Roeder, Hubbard, and Dunbar \cite{Roeder2007}. Bowers and Stephenson proved the result as a special case of their work on branched packings \cite{BS96}. There are also several generalizations, for instance packings with deep overlap angles of more than $\pi/2$ due to Rivin \cite{Rivin:1994kg}. A nice survey on the history of the theorem and a host of related results may be found in \cite{B19}. 

The problem of circle packing was further generalized by Bowers and Stephenson \cite{BS04} to include circles that do not overlap by replacing desired overlap angles with desired {\em inversive distances}. When two circles overlap, their inversive distance is the cosine of their angle of overlap, but is also defined when they do not. In this full generality much less is known. The existence of circle patterns realizing a set of desired inversive distances corresponding to a triangulated polyhedron has not been characterized and remains an important open problem. It is known that not all possible assignments of inversive distances to a triangulation are realizable. Furthermore, unlike in the tangency and overlapping cases, general inversive distance packings are not necessarily unique (as was shown in \cite{Ma:2012hl} and further explored in \cite{Bowers2017}), though uniqueness may be obtained with an appropriate notion of convexity for a circle packing, such as that in \cite{BBP18}, which we generalize in this paper. The development of new techniques seems necessary to get a handle on circle packings in this more general setting.

Recently Connelly and Gortler produced a novel proof of the circle packing theorem that obtains a packing via a continuous motion of circles \cite{CG19}. Their method starts from a canonical packing of circles and, through a finite sequence of moves called flip-and-flow operations, smoothly varies the starting circle pattern into a tangency pattern with the same number of circles with any desired combinatorics homeomorphic to a triangulation of a sphere. Their proof is inherently algorithmic and can be numerically approximated. It is notable that their method computes both the centers and radii of the circles simultaneously. Prior algorithms for circle packing, such as \cite{CS03,OSC17}, have either computed all radii and then produced circle centers in a separate layout step once the radii where computed or used multiple passes that alternate between an adjustment of all radii followed by an adjustment of the circle centers. Connelly and Gortler's method is the only algorithm known to this author that computes both centers and radii in an integrated and simultaneous way. Following their approach we obtain a similar algorithm for computing shallow overlap circle packings that also computes circle centers and radii in an integrated way.

\paragraph{Contributions of this paper.}
In this paper we extend Connelly and Gortler's method to obtain the full generality of the KAT theorem. Our starting point is the ending point of their algorithm--a tangency packing that has the right combinatorics, but does not yet realize desired overlap angles between $0$ and $\pi/2$. Our method then uses the same flow operation as theirs without the combinatorial flip mechanism to adjust, one by one, the overlaps across each edge of the triangulation. Considerable work is required to show that the flow exists and can be carried through to any desired overlap subject to the mild conditions of KAT.

In order to generalize Connelly and Gortler's result we moved the entire process from the plane onto the sphere $\mathbb{S}^2$. Packings on the plane and on the sphere are related by stereographic projection. Our proofs are inherently spherical and the algorithm we use to adjust the inversive distances is carried out {\em in situ} on the sphere. We make significant use of the connection between triangulated polyhedra in the de Sitter space and disk packings on $\mathbb{S}^2$. This allows us to prove the infinitesimal rigidity of all strictly convex circle polyhedra. This is a partial generalization of the result from \cite{BBP19} that all c-polyhedra are rigid. This connection between circle polyhedra and de Sitter polyhedra exists as a sort of mathematical folklore--it is known to several researchers in the field and occasionally plays a role in proofs (e.g. \cite{Ma:2012hl}), but we are not aware of any work that carefully explains the connection in any level of detail. As such we hope that our preliminaries section may be found a useful introduction to this way of viewing circle packings. We remark that this view is also useful computationally, since it provides a homogeneous representation of disks on $\mathbb{S}^2$, which eliminates many special cases from algorithms dealing with circle patterns and linearalizes many properties of circle polyhedra. 

An example of producing an overlap packing using the techniques of this paper is shown in figure~\ref{fig:algorithmrun}. The code that produced this figure is not currently in a form fit to release publicly, but the author is happy to make it available to any interested readers upon request. 

\paragraph{Organization of the paper.}
We first give a short introduction to the analytic geometry of disks on $\mathbb{S}^2$ which identifies disks with spacelike rays through the origin of the Minkowski spacetime $\spacetime$ (\S\ref{sec:preliminaries}). We use this view to develop the geometry of circles/disks on the sphere. 

We then define our notion of a circle packing, which we call circle polyhedra, and develop properties of circle polyhedra, such as convexity and those that are geodesic (\S\ref{sec:circlepolyhedra}). Our notion of convexity is in some sense new to circle packing and generalizes the notion of convexity developed in \cite{BBP18}. It is defined there only for circle polyhedra with hyperbolic faces. Here we drop the restriction to hyperbolic faces and define convexity for all circle polyhedra. 

We then analyze the properties of motions of disk patterns that maintain certain constraints (\S\ref{sec:motions}). Our goal in this section is to show that, subject to some mild constraints that arise naturally from the conditions in the KAT theorem, motions of disk patterns that maintain shallow overlaps behave well--they maintain a geodesic triangulation of $\mathbb{S}^2$, remain convex, and no disk shrinks to a point or grows to encompass the entire sphere without violating a constraint. It is interesting that our analysis here is for very general motions of circle polyhedra in which no edge's inversive distance is fixed, only the range of inversive distances is constrained along each edge. Our use of these motions to prove KAT is a significantly restricted class of this more general investigation.

Next, we show that all strictly convex circle polyhedra are rigid (\S\ref{sec:infrigidity}) and develop the necessary rigidity theory of circle polyhedra following the same general outline as Connelly and Gortler. Though the development here is fairly standard for rigidity theory, we believe this result is of independent interest to the circle packing community in that it generalizes to all strictly convex circle polyhedra the recent result that almost all circle polyhedra are infinitesimally rigid \cite{BBP19}. This section includes the development of the packing manifold, a 1-dimensional Riemannian manifold of disk configurations obtained by dropping one edge constraint from a convex circle polyhedra. The packing manifold was developed in \cite{CGT19,CG19} for circle polyhedra where all edges are tangent save for the dropped edge. We extend this notion to all geodesic convex circle polyhedra with shallow overlaps. 

Finally, we show how to put the pieces together in order to extend Connelly and Gortler's method (\S\ref{sec:proofofkat}) and prove the full Koebe-Andre'ev-Thurston method. The first part of our proof follows the general outline of Connelly and Gortler, with the details provided by the preceding sections. This proves a restricted version of the KAT theorem for circle polyhedra we call strictly shallow. The full generality of the theorem is then obtained by a limiting argument. 

We end with an intriguing example obtained by computational experimentation that shows this idea extends beyond the KAT theorem. 

\begin{figure}[hbt!]
  \centering
  \begin{minipage}[b]{0.4\textwidth}
  	\includegraphics[width=\textwidth]{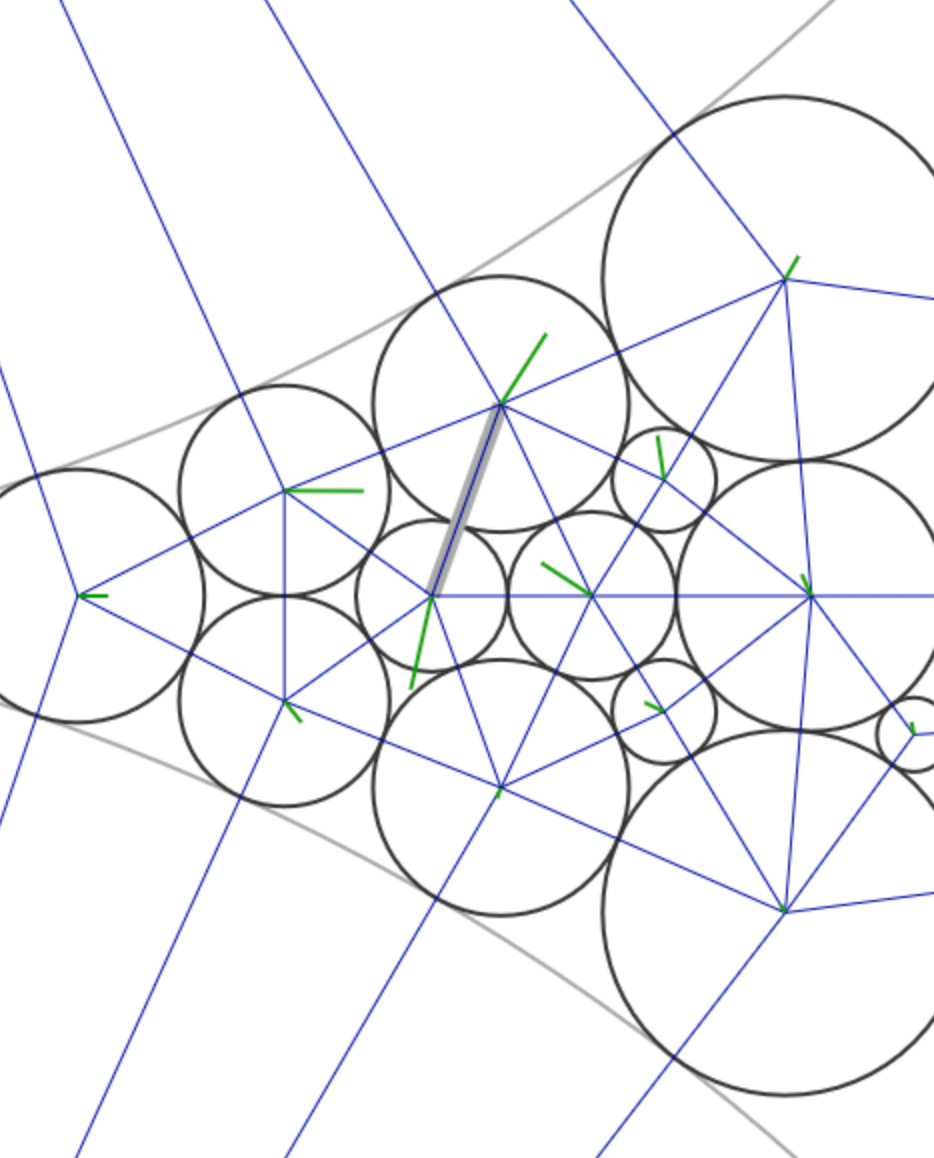}
	\subcaption{}
  \end{minipage}
  \begin{minipage}[b]{0.4\textwidth}
	  \includegraphics[width=\textwidth]{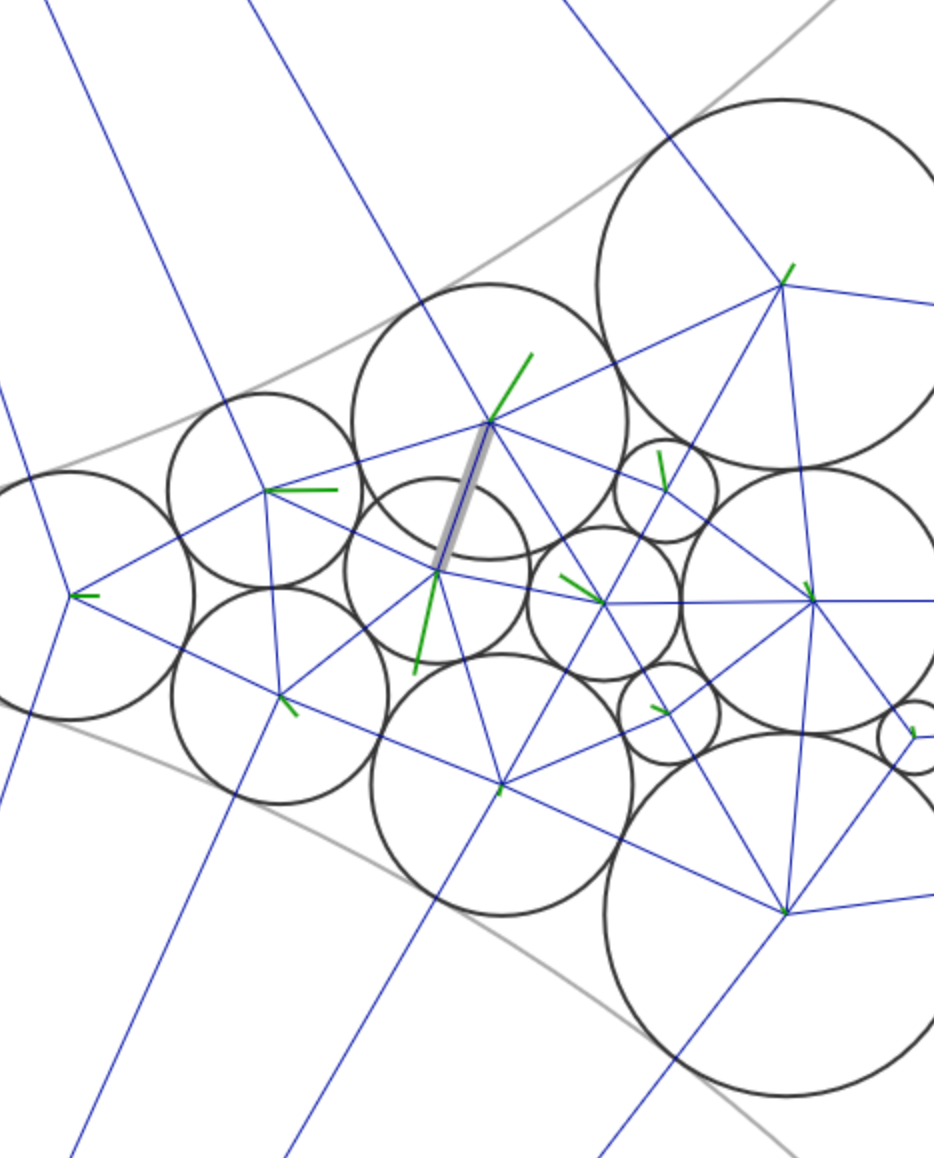}
	  \subcaption{}
  \end{minipage} \\
  \begin{minipage}[b]{0.4\textwidth}
  	\includegraphics[width=\textwidth]{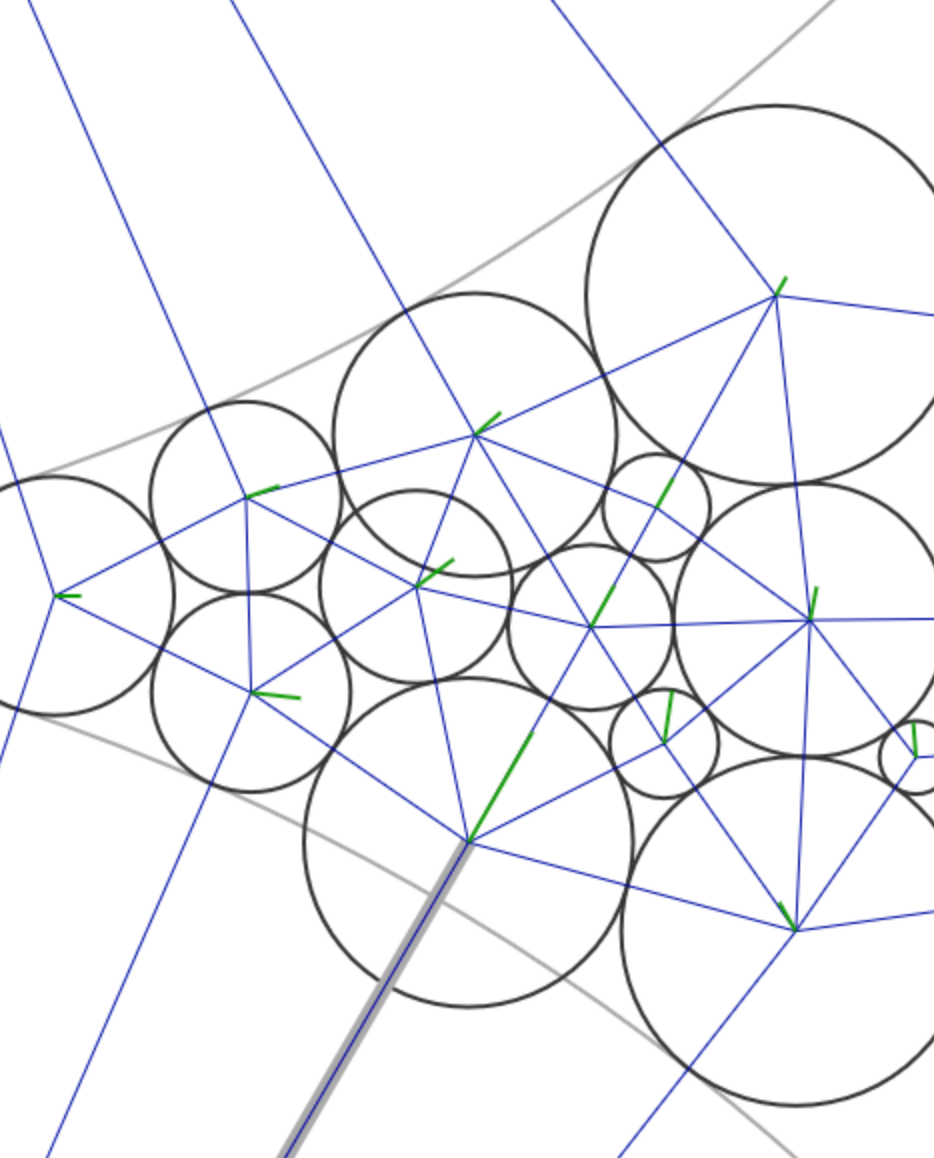}
	\subcaption{}
  \end{minipage}
  \begin{minipage}[b]{0.4\textwidth}
	  \includegraphics[width=\textwidth]{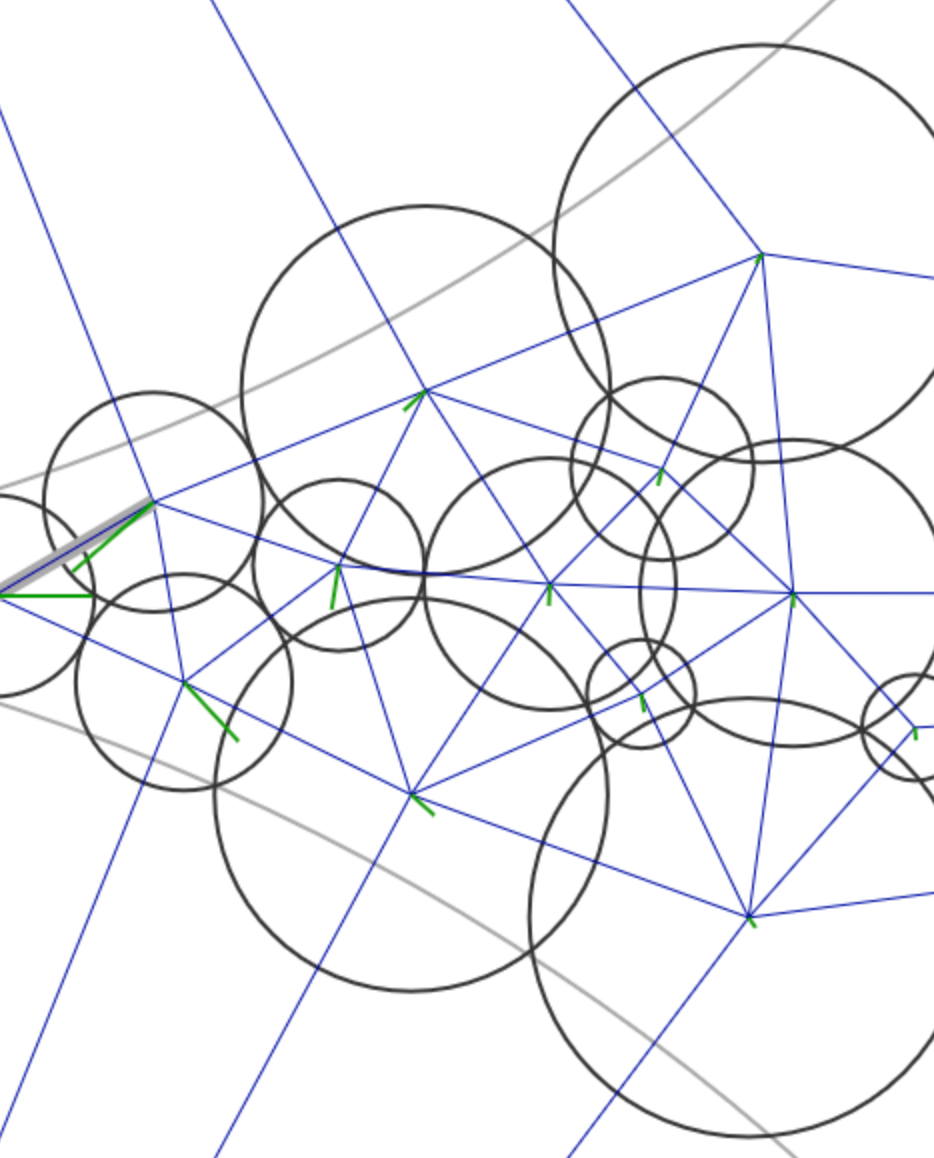}
	  \subcaption{}
  \end{minipage} \\
  
  \caption{Starting from a tangency packing (a), our proof fixes all inversive distances except one (represented by the gray edge) which is corrected via flow. (b) and (c) show two successive applications of flow along two different edges. (d) shows a final packing. The green lines are the tangent vectors to the motion of each circle's center in the flow determined by the gray free-edge.}
  \label{fig:algorithmrun}
\end{figure}

\section{Preliminaries}\label{sec:preliminaries}

\subsection{Analytic geometry of disks on $\mathbb{S}^2$}

Our main objects of interest are disks on $\mathbb{S}^2$. In order to parametrize disks on $\mathbb{S}^2$ we begin with the Minkowski spacetime, $\spacetime$, which is $\mathbb{R}^4$ endowed with the pseudo-Euclidean inner product of signature $(1, 3)$. This inner product, known as the \term{Lorentz inner product}, is given by 
\begin{equation}
	\langle (t_1, x_1, y_1, z_1), (t_2, x_2, y_2, z_2)\rangle_{1,3} = t_1 t_2 - x_1 x_2 - y_1 y_2 - z_1 z_2.
\end{equation}
It is standard to treat the $t=1$ subspace of $\spacetime$ as a model for three dimensional Euclidean space, but such a choice has certain disadvantages in our setting. Instead, we model $\mathbb{E}^3$ as the set of rays from the origin having a positive $t$-coordinate (or alternatively as the upper hemisphere of the unit sphere in $\mathbb{R}^4$). We parametrize this space by specifying a point on the ray $(t, x, y, z)$ and thus the Euclidean point corresponds to the equivalence class of points $(\lambda t, \lambda x, \lambda y, \lambda z)$ for $\lambda > 0$. To ``view'' a set of Euclidean points we compute the intersection of the ray with the $t=1$ subspace, so that the usual $\mathbb{E}^3$ coordinates for the point represented by $(t, x, y, z)$ are $(x/t, y/t, z/t)$. The advantage of viewing $\mathbb{E}^3$ as a set of rays instead of the $t=1$ subspace is that compactifying $\mathbb{E}^3$ with the sphere at infinity is natural in this view. Every ray through a point $(0, x, y, z)$ represents the point at infinity in $\mathbb{E}^3$ in the direction $(x, y, z)$. This \term{extended Euclidean space}, $\widehat{\mathbb{E}}^3$ is modeled by the rays in $\spacetime$ with $t\geq 0$, or equivalently with the upper $t$-hemisphere of the unit Euclidean 3-sphere in $\spacetime$. We could also include those rays with negative $t$-coordinate, which leads to a double covering of $\mathbb{E}^3$, in which every point of $\mathbb{E}^3$ also has an orientation $+$ or $-$, connected by the sphere at infinity. Such a construction leads to a model of the oriented projective space $T^3$, an oriented version of $\mathbb{RP}^3$, but we will not explicitly use this in this paper. See \cite{S87} for a development of oriented projective geometry. 

Let $\mathbf{x}\in\spacetime$. The \term{Minkowski norm} of $\mathbf{x}$ is the complex valued function $|\mathbf{x}|_{1,3} = \sqrt{\langle\mathbf{x}, \mathbf{x}\rangle_{1,3}}$. The points for which $|\mathbf{x}|_{1,3} = 0$ is called the \term{light cone}. The set of points for which $|\mathbf{x}|_{1,3}^2 = -1$ forms a hyperboloid of revolution around the $a$-axis in $\spacetime$ called the \term{de Sitter space}, denoted $d\mathbb{S}^3$, and the set of points for which $|\mathbf{x}|_{1,3}^2 =1$ forms a hyperboloid of two sheets which is a model for the hyperbolic space $\mathbb{H}^3$. The points $\mathbf{x}\in\spacetime$ are classified as \term{lightlike} if $|\mathbf{x}|_{1,3} = 0$, \term{spacelike} if $|\mathbf{x}|_{1,3}^2 < 0$, and \term{timelike} if $|\mathbf{x}|_{1,3}^2 < 0$. Similarly, rays, vectors, and lines through the origin of $\spacetime$ are classified as lightlike, timelike, or spacelike depending on whether the points along the ray, the endpoint of the vector, or the points of the line are lightlike, timelike, or spacelike.

\paragraph{The unit sphere $\mathbb{S}^2$.}
Consider the rays from the origin of $\spacetime$ with positive $t$-coordinate. Those that are lightlike correspond to the points of the unit sphere $\mathbb{S}^2$ in $\mathbb{E}^3$. Those that are timelike correspond to points on the interior of the sphere (and thus are a model of the hyperbolic space $\mathbb{H}^3$). Those that are spacelike correspond to points on the exterior of $\mathbb{S}^2$. 

\paragraph{Lorentz normals and hyperplanes.}
Let ${\bf N} = (a, b, c, d)$ and ${\bf x} = (t, x, y, z)$ be two vectors in $\mathbb{R}^{1,3}$. The two vectors are \term{Lorentz orthogonal}, or simply orthogonal, if and only if $\langle {\bf N}, {\bf x}\rangle_{1,3} = 0$. The set $\Pi_{\bf N} = \{{\bf x}\in\mathbb{R}^{1,3} : \langle {\bf N}, {\bf x} \rangle_{1, 3} = 0\}$ is a hyperplane through the origin of $\spacetime$ with \term{Lorentz normal}, or simply normal, ${\bf N}$. We parametrize the hyperplanes through the origin of $\mathbb{R}^{1,3}$ by their Lorentz normals. We will use $(a, b, c, d)$ to describe coefficients of the normal of a hyperplane through the origin of $\spacetime$ and $(t, x, y, z)$ to denote a point.  A hyperplane through the origin of $\spacetime$ is lightlike if it is tangent to the lightcone, timelike if it intersects the lightcone non-trivially, and spacelike if it meets the lightcone only at the origin. A lightlike hyperplane has a lightlike normal, a timelike hyperplane has a spacelike normal, and a spacelike hyperplane has a timelike normal.

\paragraph{Circles and disks on $\mathbb{S}^2$}
A circle on $\mathbb{S}^2$ is given by the intersection of a hyperplane through the origin of $\mathbb{R}^{1,3}$ with $\mathbb{S}^2$. Thus, we parametrize the set of circles on $\mathbb{S}^2$ by $(a, b, c, d)$ where the circle is the set of points $(t, x, y, z) \in \mathbb{S}^2$ satisfying
\begin{equation}
	a t - b x - c y - d z = 0. 
\end{equation}
The two disks bounded by the circle $(a, b, c, d)$ are the positively oriented disk $a t - b x - c y - d z < 0$ and the negatively oriented disk $a t - b x - c y - d z > 0$. Given a disk $D$, we denote its boundary circle by $\partial D$.

Scaling the coordinates of $(a, b, c, d)$ by any $\lambda>0$ corresponds to the same disk and by any $\lambda < 0$ corresponds to the opposite disk with the same boundary. Thus each disk is identified with a ray in $\mathbb{R}^{1,3}$. Note that unlike with our model of $\widehat{\mathbb{E}}^3$, there is no constraint on the sign of the $a$-coordinate.

A hyperplane $(a, b, c, d)$ has a real circle of intersection with $\mathbb{S}^2$ if and only if its normal is spacelike. If $(a, b, c, d)$ is lightlike, then the intersection of the hyperplane $(a, b, c, d)$ with $\mathbb{S}^2$ is a point. If $(a, b, c, d)$ is timelike, then the intersection is imaginary. Thus, we categorize circles and disks of $\mathbb{S}^2$ as real, point, or imaginary depending on whether the Lorentz normal is spacelike, lightlike, or timelike. From here on we think of a 4-tuple $(a, b, c, d)$ as simultaneously defining a normal vector in $\spacetime$, a halfspace incident a hyperplane in $\spacetime$, and a disk on $\mathbb{S}^2$ that may be real, point, or imaginary. 

\paragraph{Areas of disks on $\mathbb{S}^2$}
The $a$-coordinate of a real disk determines a bound on the area of the disk. When $a > 0$, the area of the disk is less than $2\pi$. When $a = 0$, the disk is a great circle and thus has area $2\pi$. When $a < 0$, the area of the disk is greater than $2\pi$. For point disks, a positive $a$-coordinate corresponds to zero area, while a negative $a$ coordinate corresponds to a covering of the entire sphere with area $4\pi$.

\paragraph{Conical caps.} 
Let $D = (a, b, c, d)$ be a real disk of area at most $2\pi$. Treat $(a, b, c, d)$ as a vector in $\spacetime$. Since $D$ is real, the vector $(a, b, c, d)$ is spacelike. Let $D^*$ denote the ray from the origin of $\spacetime$ in the direction $(a, b, c, d)$. This ray corresponds to a point of $\widehat{\mathbb{E}}^3$ which is finite when $a > 0$ (i.e. $D$ has area less than $2\pi$) and on the sphere at infinity when $a = 0$ (i.e. $D$ has area $2\pi$). When $a > 0$ the point $D^*$ in $\widehat{\mathbb{E}}^3$ is precisely the apex of the cone tangent to $\mathbb{S}^2$ at $\partial D$. When $a = 0$, $\partial D$ is a great circle and the cylinder tangent to $\mathbb{S}^2$ at $\partial D$ is parallel to the vector $(b, c, d)$. Properly, this is a cone with apex at the point of infinity in direction $(b, c, d)$ tangent to $\mathbb{S}^2$ in $\widehat{\mathbb{E}}^3$.

\paragraph{Lorentz transformations and Möbius transformations.} The isometries (i.e. Lorentz inner product preserving maps) of $\mathbb{R}^{1,3}$ that fix the origin form a group called the \term{Lorentz group}. The subgroup of the Lorentz group that fix the time direction $t$ and are orientation preserving on the space directions $x$, $y$, and $z$ are called the \term{restricted Lorentz group}. The elements of this group map the light cone to itself, or rather map $\mathbb{S}^2$ to itself. Restricted to $\mathbb{S}^2$, the restricted Lorentz group is simply the 6-dimensional orientation preserving \term{Möbius group}, $\textrm{Möb}(\mathbb{S}^2)$, on $\mathbb{S}^2$.  

\subsection{Inversive Distance}

The \term{inversive distance} is a Möbius invariant measurement defined on two disks $D_1$ and $D_2$ of $\mathbb{S}^2$. In our parametrization, the inversive distance $d(D_1, D_2)$ is simply the negative normalized Lorentz inner product between them. 
\begin{equation}
	d(D_1, D_2) = -\frac{\langle D_1, D_2\rangle_{1,3}}{|D_1|_{1,3}|D_2|_{1,3}},
\end{equation}
When $D_1$ and $D_2$ are both real disks, then the normal to their defining planes are spacelike. Thus we may normalize the coordinates of $D_1$ and $D_2$ so that $\langle D_1, D_1 \rangle_{1, 3} = \langle D_2, D_2 \rangle_{1, 3} = -1$. In this case, the inversive distance between $D_1$ and $D_2$ simplifies to 
\begin{equation}
	d(D_1, D_2) = \langle D_1, D_2 \rangle_{1, 3}
\end{equation}
We will call the coordinates of a normalized real disk its \term{de Sitter coordinates}. Given any real disk $D=(a, b, c, d)$, its de Sitter coordinates are $(\lambda a, \lambda b, \lambda c, \lambda d)$ where $\lambda$ is the normalization factor $\lambda = 1/\sqrt{b^2 + c^2 + d^2 - a^2}$. The \term{absolute inversive distance}, $|d(D_1, D_2)|$ is sometimes useful as well.  

Inverting a disk $D_1$ across its boundary to obtain $-D_1$ flips the sign of its inversive distance with any other disk, since $\langle D_1, D_2 \rangle_{1, 3} = -\langle -D_1, D_2\rangle_{1, 3}$.

The inversive distance between two disks gives us information on how they situate relative to one another. Let $D_1$ and $D_2$ be two disks and $C_1 = \partial D_1$ and $C_2 = \partial D_2$ be their boundary circles. If $|d(D_1, D_2)| > 1$ then $C_1$ and $C_2$ are disjoint. When $d(D_1, D_2) < -1$, one of the disks completely contains the other on its interior. When $d(D_1, D_2) > 1$, then either the two disks are disjoint, or intersect at an annular region. When $|d(D_1, D_2)| = 1$, then $C_1$ and $C_2$ are tangent at a point $p$. If $d(D_1, D_2) = -1$ then one of the disks contains the interior of the other on its interior. Otherwise, the two disks either meet at just the point $p$ or their intersection forms a crescent region that comes to a point at $p$. If $d(D_1, D_2)\in[-1, 1]$ then $C_1$ and $C_2$ intersect at two distinct points  $p_1$ and $p_2$ and divide $\mathbb{S}^2$ into four lune regions, each of which is bounded by an arc of $C_1$ and an arc of $C_2$. The angle of intersection $\theta$ on the interior of both $D_1$ and $D_2$ can be computed from the inversive distance via
\begin{equation}
	\cos \theta = d(D_1, D_2).
\end{equation}
We call this the \term{overlap angle} of $D_1$ and $D_2$. When $0\leq \theta \leq \pi/2$ we say that the overlap is \term{shallow} and when $0\leq \theta < \pi/2$ we say the overlap is \term{strictly shallow}. Note that two disks $D_1$ and $D_2$ have a shallow overlap if and only if $d(D_1, D_2) \in [0, 1]$ and a strictly shallow overlap if and only if $d(D_1, D_2)\in (0, 1]$. 

\paragraph{Möbius transformations and inversive distance data.}
The Möbius invariance of the inversive distance leads to several nice properties. First, there exists a Möbius transformation taking a pair of disks $(D_1, D_2)$ to a pair $(D_1', D_2')$ if and only if the inversive distances are the same between the two pairs, $d(D_1 D_2) = d(D_1', D_2')$. 

Second, given two triples of disks $(D_1, D_2, D_3)$ and $(D_1', D_2', D_3')$ such that each triple is linearly independent as rays in $\spacetime$, there exists a unique Möbius transformation $M$ taking $D_1\mapsto D_1'$, $D_2\mapsto D_2'$, and $D_3\mapsto D_3'$ if and only if the pairwise distances match, $d(D_i, D_j) = d(D_i', D_j')$ for $i, j = 1, 2, 3$, $i\neq j$. 

Finally, given two distinct disks $D_1$ and $D_2$, there is a one parameter continuous differentiable subgroup of $\textrm{Möb}(\mathbb{S}^2)$ called a \term{Möbius flow} that fixes both disks. More information on Möbius flows may be found in \cite{Bowers2017}. 

\subsubsection{Planar representations of disks and inversive distances.}

Any pattern of disks on $\mathbb{S}^2$ may be taken to a pattern of disks on the extended complex plane (Riemann sphere) $\mathbb{C}\cup\{\infty\}$ by stereographic projection from the North pole $N$. The North pole itself is mapped to $\infty$. The boundary circle $\partial D$ of a disk $D$ either maps to a circle in $\mathbb{C}$ if $\partial D$ does not contain $N$ or to a line in $\mathbb{C}$, which is a circle of infinite radius in $\mathbb{C}\cup\{\infty\}$ containing the point $\infty$, if $\partial D$ does contain $N$. Each circle in $\mathbb{C}$ bounds two disks in $\mathbb{C}\cup\{\infty\}$, the first being the standard interior disk and the second being the exterior disk containing $\infty$. Topologically, both are disks in $\mathbb{C}\cup\{\infty\}$. A line in $\mathbb{C}$ (which is still a circle in $\mathbb{C}\cup\{\infty\}$) bounds two half-planes which are disks in $\mathbb{C}\cup\{\infty\}$.

A key property of stereographic projection is that it preserves inversive distances between pairs of disks. Thus we may start with a set of $n$ disks $\mathbf{D}$ on $\mathbb{S}^2$, stereographically project to a set of disks on $\mathbb{C}\cup\{\infty\}$ and then apply a Möbius transformation to $\mathbb{C}\cup\{\infty\}$ to obtain a new set of disk $\mathbf{D}'$ in $\mathbb{C}\cup\{\infty\}$ while preserving all pairwise inversive distances. 

We give a partial parametrization of disks on $\mathbb{C}\cup\{\infty\}$. Let $(x, y, r)$ denote the center $(x, y)\in\mathbb{C}$ of a disk and $r\neq 0$ be its signed radius. The boundary circle of $(x, y, r)$ is the circle centered at $(x, y)$ with radius $|r|$. The disk $(x, y, r)$ is the usual interior of this circle when $r > 0$ and denotes the exterior disk when $r < 0$. Under this parametrization, the inversive distance between two disks $D_1 = (x_1, y_1, r_1)$ and $D_2 = (x_2, y_2, r_2)$ is 
\begin{equation}
d(D_1, D_2) = \frac{d_{12}^2 - r_1^2 - r_2^2}{2r_1 r_2}	
\end{equation}
where $d_{12}^2 = (x_2-x_1)^2 + (y_2-y_1)^2$ is the squared distance between their centers. By a limiting argument, the inversive distance between a finite disk $D_1$ and a disk of infinite radius $D_2$ may be shown to be
\begin{equation}
	d(D_1, D_2) = \frac{d_{12}}{2r_1}
\end{equation}
where $d_{12}$ here denotes the signed distance from the center $(x_1, y_1)$ to the boundary line in $\mathbb{C}$. The sign of this distance is positive if $(x_1, y_1)$ is not in $D_2$, negative otherwise.

The inversive distance between two disks of infinite radius $D_1$ and $D_2$ is $1$ if their boundary lines are parallel in $\mathbb{C}$ (and hence their boundary circles are tangent at $\infty$) or is the cosine angle of overlap between them $\theta_{12}$:
\begin{equation}
	d(D_1, D_2) = \cos\theta_{12}.
\end{equation}

\subsubsection{Computations with inversive distances}

Here we do a few quick computations with inversive distances that are used later.

\begin{Lemma}
	Let $b$, $c$, and $d$ be given. There exist two Lorentz-normalized disks with $b$, $c$, and $d$ as the last three coordinates. Both are real, point, or imaginary depending on whether  $b^2 + c^2 + d^2$ is greater than, equal to, or less than 1. 
\end{Lemma}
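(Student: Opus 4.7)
Set $r^{2}:=b^{2}+c^{2}+d^{2}$. Observe that the Lorentz norm squared of the candidate vector $(a,b,c,d)$ is
\[
\langle (a,b,c,d),(a,b,c,d)\rangle_{1,3}\;=\;a^{2}-r^{2},
\]
so fixing the last three coordinates reduces everything to a quadratic in the single unknown $a$. By the preliminaries, a disk is \emph{real} (spacelike) exactly when $a^{2}-r^{2}<0$, a \emph{point} disk (lightlike) when $a^{2}-r^{2}=0$, and \emph{imaginary} (timelike) when $a^{2}-r^{2}>0$; the Lorentz normalization fixes the squared norm at $-1$, $0$, or $+1$ in the three respective cases. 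The plan is therefore to examine, for given $b,c,d$, which of these three normalization conditions is compatible with a real value of $a$, and to count the number of solutions.

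Running through the three ranges of $r^{2}$: if $r^{2}>1$, then the real-disk normalization $a^{2}-r^{2}=-1$ gives $a^{2}=r^{2}-1>0$, producing two distinct real solutions $a=\pm\sqrt{r^{2}-1}$, and hence two Lorentz-normalized real disks; if $r^{2}=1$, the point-disk condition $a^{2}-r^{2}=0$ gives $a^{2}=1$, yielding two point disks $a=\pm 1$; and if $r^{2}<1$, the imaginary-disk normalization $a^{2}-r^{2}=1$ gives $a^{2}=r^{2}+1>0$, yielding two imaginary disks $a=\pm\sqrt{r^{2}+1}$. In each case the quadratic has two distinct real roots, which differ only by sign of $a$; they correspond to disks that agree in $(b,c,d)$ and are of the type asserted by the lemma.

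There is essentially no obstacle here beyond pinning down the correct normalization in each regime; once the map between sign of $a^{2}-r^{2}$ and disk type is in place, the statement reduces to solving $a^{2}=r^{2}+k$ for $k\in\{-1,0,1\}$ chosen by whether $r^{2}$ is greater than, equal to, or less than $1$, and observing that the right-hand side is strictly positive in each case.
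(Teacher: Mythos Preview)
Your proof is correct and follows the same core computation as the paper: write $\langle D,D\rangle_{1,3}=a^{2}-r^{2}$ and solve the resulting quadratic in $a$. The paper's own proof is terser and only writes out the real-disk case explicitly, fixing the normalization $\langle D,D\rangle_{1,3}=-1$ throughout and obtaining $a=\pm\sqrt{r^{2}-1}$; your treatment is more careful in that you select the appropriate normalization ($-1$, $0$, or $+1$) in each regime, which makes the trichotomy in the lemma statement transparent and guarantees two distinct real solutions for $a$ in every case.
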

\begin{proof}
	Let $D = (a, b, c, d)$ be a normalized disk with the desired three last coordinates. Since $D$ is Lorentz-normalized $\langle D, D, \rangle_{1,3} = -1 \Rightarrow a^2 - b^2 - c^2 - d^2 = -1\Rightarrow a^2 = b^2 + c^2 + d^2 -1 \rightarrow a = \pm \sqrt{b^2 + c^2 + d^2 - 1}$. Thus there are two real normalized disks 
\end{proof}

\begin{Lemma}\label{lem:twosolutionsgivenbc}
	Let $D = (a, b, c, d)$ be a normalized disk on $\mathbb{S}^2$. Let $\delta \in \mathbb{R}$ be a desired inversive distance. Let $b', c'\in \mathbb{R}$ be given. Then there exists at most two real normalized disks $D_1'$ and $D_2'$ with $b$ and $c$ coordinates given by $b'$ and $c'$ such that $d(D, D_1') = d(D, D_2') = \delta$. 
\end{Lemma}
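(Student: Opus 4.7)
The plan is to show that, with $b'$ and $c'$ fixed, the two remaining coordinates $a', d'$ of $D'$ must satisfy one linear and one quadratic equation in the plane, which intersect in at most two points.

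First, I would translate the two defining conditions into coordinate equations in the unknowns $(a', d')$. Writing $D' = (a', b', c', d')$, Lorentz-normalization $\langle D', D' \rangle_{1,3} = -1$ becomes
\begin{equation*}
    (a')^2 - (d')^2 = b'^2 + c'^2 - 1,
\end{equation*}
a conic in the $(a', d')$-plane. Since both $D$ and $D'$ are normalized and real, the inversive distance is computed by the simplified formula $d(D, D') = \langle D, D' \rangle_{1,3}$, so the constraint $d(D, D') = \delta$ becomes
\begin{equation*}
    aa' - dd' = \delta + bb' + cc',
\end{equation*}
an affine line in the $(a', d')$-plane.

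Second, I would eliminate one variable. Assuming $a \neq 0$, solving the linear equation for $a'$ in terms of $d'$ and substituting into the quadratic yields a single quadratic equation in $d'$ whose leading coefficient is $d^2 - a^2$. When nonzero, this quadratic has at most two real roots, each of which determines $a'$ uniquely from the linear equation, giving at most two real normalized disks $D_1', D_2'$. The case $a = 0$ is symmetric: one instead solves for $d'$ and substitutes.

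The only real obstacle is the degenerate case $a^2 = d^2$, where the reduced equation collapses to a linear or even trivial identity in $d'$. In this situation the system either still has at most one solution or, in a very specific alignment of the parameters, becomes underdetermined. Since the lemma asserts only the upper bound of two and serves merely to constrain the solution set in later arguments, I expect these measure-zero coincidences to be either harmless or implicitly excluded by the contexts in which the lemma is applied.
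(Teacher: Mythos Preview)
Your proposal is correct and follows essentially the same approach as the paper: both reduce the problem to a single quadratic equation in $d'$. The only cosmetic difference is that the paper substitutes $a' = \sqrt{(b')^2+(c')^2+(d')^2-1}$ from the normalization into the inversive-distance equation and then squares, whereas you substitute $a'$ from the linear inversive-distance equation into the normalization; your route is slightly cleaner in that it avoids the square-root and the attendant sign bookkeeping, and your remark on the degenerate case $a^2=d^2$ is more careful than the paper's own treatment.
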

\begin{proof}
	Let $D = (a, b, c, d)$ and  $\delta, b', c'\in\mathbb{R}$ be given. A normalized disk $D'$ with $b$ and $c$ coordinates given by $b'$ and $c'$ and third coordinate $d'$ has coordinates $(\sqrt{(b')^2+(c')^2+(d')^2-1}, b', c', d')$. Then $d(D, D') = \langle D, D'\rangle_{1,3} = a \sqrt{(b')^2+(c')^2+(d')^2-1} - b b' - c c' - d d'$. Setting equal to $\delta$ and solving for $d'$ we obtain a quadratic equation in $d'$, thus proving the lemma. 
\end{proof}

We end with a computation in the planar representation. 

\begin{Lemma}
	Let $D_1$ and $D_2$ be the interior disks of finite radius boundary circles in $\mathbb{C}\cup\{\infty\}$. If the center of $D_2$ is contained in $D_1$ then $d(D_1, D_2) < 0$ (or vice versa).
\end{Lemma}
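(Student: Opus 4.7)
The plan is to apply directly the planar formula
\[
d(D_1,D_2) = \frac{d_{12}^2 - r_1^2 - r_2^2}{2 r_1 r_2}
\]
established in the preceding discussion, where $d_{12}$ is the Euclidean distance between the centers of $D_1$ and $D_2$ and $r_1, r_2$ are their signed radii. Since the statement fixes $D_1$ and $D_2$ to be the \emph{interior} disks of their respective finite-radius boundary circles, the convention in the parametrization gives $r_1 > 0$ and $r_2 > 0$, so the denominator $2 r_1 r_2$ is strictly positive.

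Next, I would translate the hypothesis ``the center of $D_2$ is contained in $D_1$'' into the inequality $d_{12} < r_1$, which follows immediately from the fact that $D_1$ is the standard interior disk of radius $r_1$ about its center. Squaring gives $d_{12}^2 < r_1^2$, hence
\[
d_{12}^2 - r_1^2 - r_2^2 < -r_2^2 < 0,
\]
since $r_2 \neq 0$. Combining this with the positivity of the denominator yields $d(D_1,D_2) < 0$, as required. The ``or vice versa'' clause is symmetric: swapping the roles of $D_1$ and $D_2$ in the argument (using $d_{12} < r_2$) gives the same conclusion because the formula is symmetric in the two disks.

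There is no real obstacle here; the only point requiring care is to note that being an ``interior disk of a finite radius boundary circle'' forces the signed radius to be positive under the stated parametrization, so that the sign of the denominator is controlled. Once that is observed, the argument is a one-line estimate on the numerator.
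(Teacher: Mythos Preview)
Your proof is correct and essentially identical to the paper's: the paper first normalizes by a rotation and translation so that $D_1$ is centered at the origin and $D_2$ at $(x,0)$, but then carries out exactly the same computation you give, using $x < r_1 \Rightarrow x^2 - r_1^2 - r_2^2 < 0$ together with $r_1, r_2 > 0$. The only difference is that you work directly with $d_{12}$ rather than introducing the normalization, which is a cosmetic simplification.
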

\begin{proof}
	Since translations and rotations of $\mathbb{C}$ are Möbius transformations we may, without loss of generality, assume that $D_1$ is centered at the origin and $D_2$ is centered at a point $(x, 0)$ on the real axis. Then the inversive distance is
	\[
	d(D_1, D_2) = \frac{x^2 - r_1^2 - r_2^2}{2r_1 r_2}.
	\] 
	Since $D_1$ and $D_2$ are interior disks, then $r_1, r_2 > 0$. If the center of $D_2$ is contained in $D_1$, then $x < r_1\Rightarrow x^2 < r_1^2\Rightarrow x^2 - r_1^2 - r_2^2 < 0 \Rightarrow d(D_1, D_2) < 0$. The same argument holds if the center of $D_1$ is contained in $D_2$. 
\end{proof}

\subsection{Coaxial families}
\label{sec:coaxialfamilies}

Let $D_1$ and $D_2$ be two disks on $\mathbb{S}^2$ not sharing the same boundary circle. The set $\Pi$ of linear combinations of $D_1$ and $D_2$ forms a 2-dimensional plane in $\mathbb{R}^{1,3}$. The intersection of this plane with $\deSitter$ gives a family of disks, which is called the \term{coaxial family} of $D_1$ and $D_2$, denoted $D_1\vee D_2$. The set of directions in $\mathbb{R}^{1, 3}$ orthogonal to both $D_1$ and $D_2$ spans a second 2D plane $\Pi^\perp$ through the origin of $\mathbb{R}^{1, 3}$. The intersection of this plane with $\deSitter$ also forms a coaxial family, which we call the \term{orthogonal coaxial family}, denoted $(D_1\vee D_2)^\perp$. A coaxial family and its orthogonal, taken together, is called an \term{Appollonian family} (because such families were studied by Appollonius of Perga). Every disk in a coaxial family meets every disk in the orthogonal family at a right angle, and thus if $D\in D_1\vee D_2$ and $D'\in (D_1\vee D_2)^\perp$ then $d(D, D') = 0$. Similarly, if $d(D, D_1) = 0$ and $d(D, D_2) = 0$, then $D\in (D_1\vee D_2)^\perp$.  

The intersection of $\Pi$ with $\widehat{\mathbb{E}}^3$ is the line $L$ connecting conical caps $D_1^*$ and $D_2^*$. If one, say $D_1$ is a great circle, then it is the line through the conical cap of $D_2$ parallel to the normal of $D_1$. If both $D_1$ and $D_2$ are great circles, then $\Pi$ does not meet $\mathbb{E}^3$. Here it represents the oriented projective line at infinity (which is a circle) bounding the plane through the origin of $\mathbb{E}^3$ whose normal is the cross product of the normals of $D_1$ and $D_2$. In this case $\partial D_1$ and $\partial D_2$ intersect at antipodal points on the sphere $\mathbb{S}^2$ and the coaxial family is the set of disks whose boundary pass through these two antipodal points. 

\paragraph{Classification of coaxial families.}
Coaxial families come in three types depending on whether the line $L$ intersects, is tangent to, or is disjoint from the sphere $\mathbb{S}^2$, which we call \term{hyperbolic}, \term{parabolic}, and \term{elliptic} respectively. The orthogonal family of a hyperbolic family is elliptic (and vice versa), while the orthogonal to a parabolic family is also parabolic. The type of a coaxial family may be determined by measuring the absolute inversive distance between any two distinct members of the family. Let $D_1$ and $D_2$ be disks. If $|d(D_1, D_2)| > 1$, then $D_1\vee D_2$ is hyperbolic; if $|d(D_1, D_2)| = 1$, then parabolic; and if $|d(D_1, D_2)| < 1$, then elliptic. Thus the boundary circles of any two members of a hyperbolic family are disjoint, of a parabolic family are tangent, and of an elliptic family intersect at two points. 

\paragraph{Möbius transformations of coaxial families.} It is a standard fact of inversive geometry that there is a Möbius transformation $M$ taking one coaxial family $F_1$ to another coaxial family $F_2$ if and only if $F_1$ and $F_2$ are the same type. Furthermore, if $M$ takes $F_1$ to $F_2$, then $M$ also takes the orthogonal families $(F_1)^\perp$ to $(F_2)^\perp$. This is useful because it allows us to take any coaxial family to certain standard coaxial families that can be used to simplify computations. For example, any hyperbolic family may be taken by a Möbius transformation to the set of latitude disks on $\mathbb{S}^2$. Similarly, any elliptic family may be taken to the set of longitude lines. A parabolic family make be taken via stereographic projection to the set of lines/half-planes in $\mathbb{C}$. Taking a family to some standard view often simplifies proofs, and we make extensive use of this below.

\subsection{Orthodisks, orthocircles, and c-planes}

Let $D_1 = (a_1, b_1, c_1, d_1)$, $D_2 = (a_2, b_2, c_2, d_2)$, and $D_3 = (a_3, b_3, c_3, d_3)$ be three linearly independent disks given on $\mathbb{S}^2$. Taken as rays in $\spacetime$ there is a unique hyperplane $\Pi$ through the origin of $\spacetime$ containing $D_1$, $D_2$, and $D_3$. It is an exercise to show that the Lorentz normal to this hyperplane is given by\begin{equation}
	C_\perp = 
	\left( 
	\begin{vmatrix}
		b_1 & c_1 & d_1 \\
		b_2 & c_2 & d_2 \\ 
		b_3 & c_3 & d_3
	\end{vmatrix},
	\begin{vmatrix}
		a_1 & c_1 & d_1 \\
		a_2 & c_2 & d_2 \\ 
		a_3 & c_3 & d_3
	\end{vmatrix},
	-\begin{vmatrix}
		a_1 & b_1 & d_1 \\
		a_2 & b_2 & d_2 \\ 
		a_3 & b_3 & d_3
	\end{vmatrix},
	\begin{vmatrix}
		a_1 & b_1 & c_1 \\
		a_2 & b_2 & c_2 \\ 
		a_3 & b_3 & c_3
	\end{vmatrix}
	\right).
\end{equation}
The intersection of this hyperplane with with $\widehat{\mathbb{E}}^3$ corresponds either to a plane in $\mathbb{E}^3$ together with its circle at infinity, or the sphere at infinity itself in $\widehat{\mathbb{E}}^3$. Since $\Pi$ is a hyperplane through the origin of $\spacetime$, it corresponds to a real, point, or imaginary circle depending on whether its intersection with $\mathbb{S}^2$ is real, point, or imaginary. We call this circle the \term{orthocircle} of the triple $(D_1, D_2, D_3)$. The two disks bounded by this circle, which correspond to the intersections of the two half-spaces defined by $\Pi$ with $\mathbb{S}^2$ we call \term{orthodisks}. The half-space in the positive direction of normal $C_\perp$ is the \term{positively oriented orthodisk} $O^+$ and that in the negative direction of $C_\perp$ is the \term{negatively oriented orthodisk} $O^-$. Any even permutation of the triple $(D_1, D_2, D_3)$ defines the same positive and negative orthodisks, whereas an odd permutation of the triple swaps the positive for the negative. 

We classify triples of disks by whether their orthocircle is an imaginary, point, or real circle. A triple $(D_1, D_2, D_3)$ is \term{elliptic} if its orthocircle is imaginary. It is \term{parabolic} if its orthocircle is a point. It is \term{hyperbolic} if its orthocircle is real\footnote{When an orthodisk is hyperbolic its interior may be viewed as a Poincaré disk model of the hyperbolic plane, and its defining disks as hyperbolic lines.}. 

Now, let $D_4$ be any other point of $\Pi$ in $\spacetime$. Since every vector from the origin to a point in $\Pi$ is Lorentz-orthgonal to the normal $C_\perp$ we have that $d(C_\perp, D_4) = 0$ and conversely if $D_4$ is any disk such that $d(C_\perp, D_4) = 0$, then $D_4$ is contained in $\Pi$. The intersection of $\Pi$ with the $\deSitter$ gives us the set of real disks meeting $C_\perp$ Lorentz-orthogonally. We call this set a \term{c-plane} of disks. Testing whether a disk $D_4$ is contained in the c-plane defined by a triple $(D_1, D_2, D_3)$ follows the usual determinant test. The sign of the following determinant
\begin{equation}\label{eq:determinanttest}
det(O^+, D_4) = 
	\begin{vmatrix}
		a_4 & b_4 & c_4 & d_4 \\ 
		a_1 & b_1 & c_1 & d_1 \\
		a_2 & b_2 & c_2 & d_2 \\
		a_3 & b_3 & c_3 & d_3 
	\end{vmatrix}
\end{equation}
determines when $D_4$ is in the c-plane defined by $D_1$, $D_2$, and $D_3$.
When $det(O^+, D_4) = 0$, $D_4$ is in the c-plane of $(D_1, D_2, D_3)$. Otherwise, the sign of the determinant determines on which side of the hyperplane $\Pi$ $D_4$ lies. We use this in section~\ref{sec:circlepolyhedra} to define the notion of convexity for circle polyhedra. As with the triples themselves, we classify the c-planes defined by a triple as hyperbolic, parabolic, or elliptic. 

A result used at least as far back as Thurston's {\em Notes} \cite{Thurston:1980} is that three circles that mutually overlap or are tangent have a common real orthocircle if and only if the sum of their overlap angles is less than $\pi$. When equal to $\pi$, the three circles are coincident at a single point which corresponds to the parabolic orthocircle of the three. An elegant geometric proof of this fact may be obtained by treating the interior of the orthocircle as the Poincaré disk model of the hyperbolic plane. The arcs of the three circles restricted to the interior of the orthocircle become hyperbolic lines and since the three circles intersect, they describe a hyperbolic triangle. The interior angles of the triangle are the angles of overlap between the circles, and thus sum to less than $\pi$. This is summarized in the following lemma. 

\begin{Lemma}
	Let $D_1, D_2, D_3$ be three disks such that $d(D_i, D_j)\in [0, 1]$ for $i, j = 1\dots 3$, $i\neq j$ and let $O_+$ denote the positively oriented orthodisk of $(D_1, D_2, D_3)$. Let $\theta_{ij}$ denote the angle of overlap between disks $D_i$ and $D_j$, which is $0$ if $\partial D_i$ and $\partial D_j$ are tangent. Let $\theta = \theta_{12} + \theta_{13} + \theta_{23}$. Then $O_+$ is hyperbolic when $\theta < \pi$, is parabolic when $\theta = \pi$, and is elliptic when $\theta > \pi$. 
\end{Lemma}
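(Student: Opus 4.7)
My plan is to reduce the classification of $O_+$ to the sign of the Gram determinant of $D_1, D_2, D_3$ in de Sitter coordinates, then determine that sign via a short trigonometric computation. The orthodisk $O_+$ is hyperbolic, parabolic, or elliptic precisely when its Lorentz normal $C_\perp$ is spacelike, lightlike, or timelike, i.e., when $\langle C_\perp, C_\perp\rangle_{1,3}$ is negative, zero, or positive. Since $C_\perp$ is the Lorentz-orthogonal complement of $\mathrm{span}(D_1, D_2, D_3)$ in $\mathbb{R}^{1,3}$, its Lorentz type is dictated by the signature of the Lorentz form restricted to that span. Crucially, the ambient form has signature $(1, 3)$, so any subspace carries at most a one-dimensional positive-definite part; this restricts the possible signatures of the $3\times 3$ Gram matrix $G$ to $(1, 2)$, $(0, 3)$, and their degenerations, and these three cases are distinguished exactly by $\det G > 0$, $\det G < 0$, and $\det G = 0$ respectively.

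In de Sitter coordinates we have $\langle D_i, D_i\rangle_{1,3} = -1$ and $\langle D_i, D_j\rangle_{1,3} = \cos\theta_{ij}$ for $i \neq j$ (using shallowness to fix the sign), so a direct cofactor expansion gives
\[
\det G = -1 + \cos^2\theta_{12} + \cos^2\theta_{13} + \cos^2\theta_{23} + 2\cos\theta_{12}\cos\theta_{13}\cos\theta_{23}.
\]
Two facts about this expression drive the proof. First, substituting $\theta_{23} = \pi - \theta_{12} - \theta_{13}$ and simplifying via the cosine addition and double-angle identities shows $\det G = 0$ whenever $\theta := \theta_{12} + \theta_{13} + \theta_{23} = \pi$. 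Second, for $\theta_{ij} \in [0, \pi/2]$,
\[
\frac{\partial(\det G)}{\partial \theta_{23}} = -2\sin\theta_{23}\bigl(\cos\theta_{23} + \cos\theta_{12}\cos\theta_{13}\bigr) \le 0,
\]
so $\det G$ is monotonically decreasing in $\theta_{23}$ for fixed $\theta_{12}, \theta_{13}$ (and by symmetry in each angle separately). Together these imply $\det G > 0 \iff \theta_{23} < \pi - \theta_{12} - \theta_{13} \iff \theta < \pi$, with the analogous equivalences for the parabolic and elliptic cases.

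The main obstacle I anticipate is the signature identification in the first paragraph: without the observation that the ambient Lorentz form restricts to at most a one-dimensional positive-definite subspace, $\det G < 0$ could a priori also correspond to signature $(2, 1)$, and one would have to argue by continuity from a sample configuration instead. An appealing alternative, hinted at in the surrounding exposition, is available in the hyperbolic case: view the interior of $O_+$ as the Poincar\'e disk and note that, since each $\partial D_i$ meets $\partial O_+$ at a right angle, $\partial D_i \cap O_+$ is a hyperbolic geodesic, and the three arcs bound a (possibly ideal) hyperbolic triangle whose interior angles are exactly $\theta_{ij}$. Gauss--Bonnet then immediately yields $\theta < \pi$, and the parabolic and elliptic cases follow from the hyperbolic case by a continuity/limiting argument applied to the Gram determinant.
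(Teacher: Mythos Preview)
Your proof is correct and takes a genuinely different route from the paper. The paper does not give a formal proof at all; the preceding paragraph sketches only the hyperbolic direction via the Poincar\'e-disk picture: if the orthocircle is real, the three boundary arcs become hyperbolic geodesics bounding a hyperbolic triangle whose interior angles are the overlap angles, so their sum is $<\pi$. The parabolic and elliptic cases are asserted but not argued. Your approach is algebraic: you identify the type of $C_\perp$ with the sign of the Gram determinant $\det G$ of $D_1,D_2,D_3$ in de Sitter coordinates, compute $\det G = -1 + \sum\cos^2\theta_{ij} + 2\prod\cos\theta_{ij}$, and then pin down the sign of $\det G$ on the cube $[0,\pi/2]^3$ by the trigonometric identity at $\theta=\pi$ together with monotonicity in each angle. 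The signature bookkeeping (that the ambient $(1,3)$ form allows only signatures $(1,2)$, $(0,3)$, or degenerate on a $3$-plane, hence $\det G$ alone distinguishes them) is the only subtle step and you handle it cleanly.

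The trade-offs: the paper's synthetic argument is conceptually transparent for the hyperbolic case and explains \emph{why} $\pi$ is the threshold (Gauss--Bonnet), but as written it does not actually cover the converse or the elliptic case without an additional trichotomy/continuity argument. Your Gram-determinant computation treats all three cases uniformly and rigorously, and as a bonus yields the explicit polynomial obstruction, at the cost of an identity verification and a monotonicity check. You even note the Poincar\'e-disk alternative in your final paragraph, which is exactly the paper's sketch. One small remark: your monotonicity inequality $\partial(\det G)/\partial\theta_{23}\le 0$ is only claimed on $[0,\pi/2]$, so when $\theta_{12}+\theta_{13}<\pi/2$ the zero $\theta_{23}=\pi-\theta_{12}-\theta_{13}$ lies outside that interval; it is worth saying explicitly that in this sub-case $\theta<\pi$ automatically and $\det G\ge \det G\big|_{\theta_{23}=\pi/2}=\cos^2\theta_{12}+\cos^2\theta_{13}-1>0$ (the last inequality because $\cos\theta_{13}>\sin\theta_{12}$). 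This is implicit in your argument but easy to miss.
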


\paragraph{Möbius transformations of c-planes} A hyperbolic c-plane corresponds to a hyperplane that intersects the interior of the light cone in $\spacetime$ and thus has a spacelike normal; a parabolic c-plane corresponds to a hyperplane tangent to the light cone with a lightlike normal; and an elliptic c-plane corresponds to a hyperplane that does not meet the interior of the light cone and has a timelike normal. Since Lorentz transformations preserve  the spacelike, lightlike, or timelikeness of vectors, this has implications for Möbius transformations between c-planes. Given two c-planes $\Pi_1$ and $\Pi_2$ there exists a Möbius transformation mapping $\Pi_1$ bijectively to $\Pi_2$ if and only if the two c-planes are the same type (i.e. both hyperbolic, or both parabolic, or both elliptic).

\subsection{Properties of small disk arrangements}

Here we include a few basic facts about triples and quadruples of disks.

\paragraph{Parabolic triples}
Let $(D_1, D_2, D_3)$ be a parabolic triple of disks and let $p$ be the common point of intersection $p = \partial D_1 \cap \partial D_2 \cap \partial D_3$ as in figure~\ref{fig:paraboliccases}. The point $p$ may (a) be on the interior of the union of the disks, (b) be on the boundary of the union of the disks next to a vanishing angle, or (c) be on the boundary of the union of the disks next to a positive angle. We now show that if all pairwise inversive distances of the triple fall in $[0, 1)$ we have case (a) and case (b) occurs if and only if two disks are tangent and meet the third disk at an overlap of $\pi/2$ (i.e. the inversive distances are $0$, $0$, and $1$).

\begin{figure}[hbt]
  \centering
  \begin{minipage}[b]{0.3\textwidth}
  	\includegraphics[width=\textwidth]{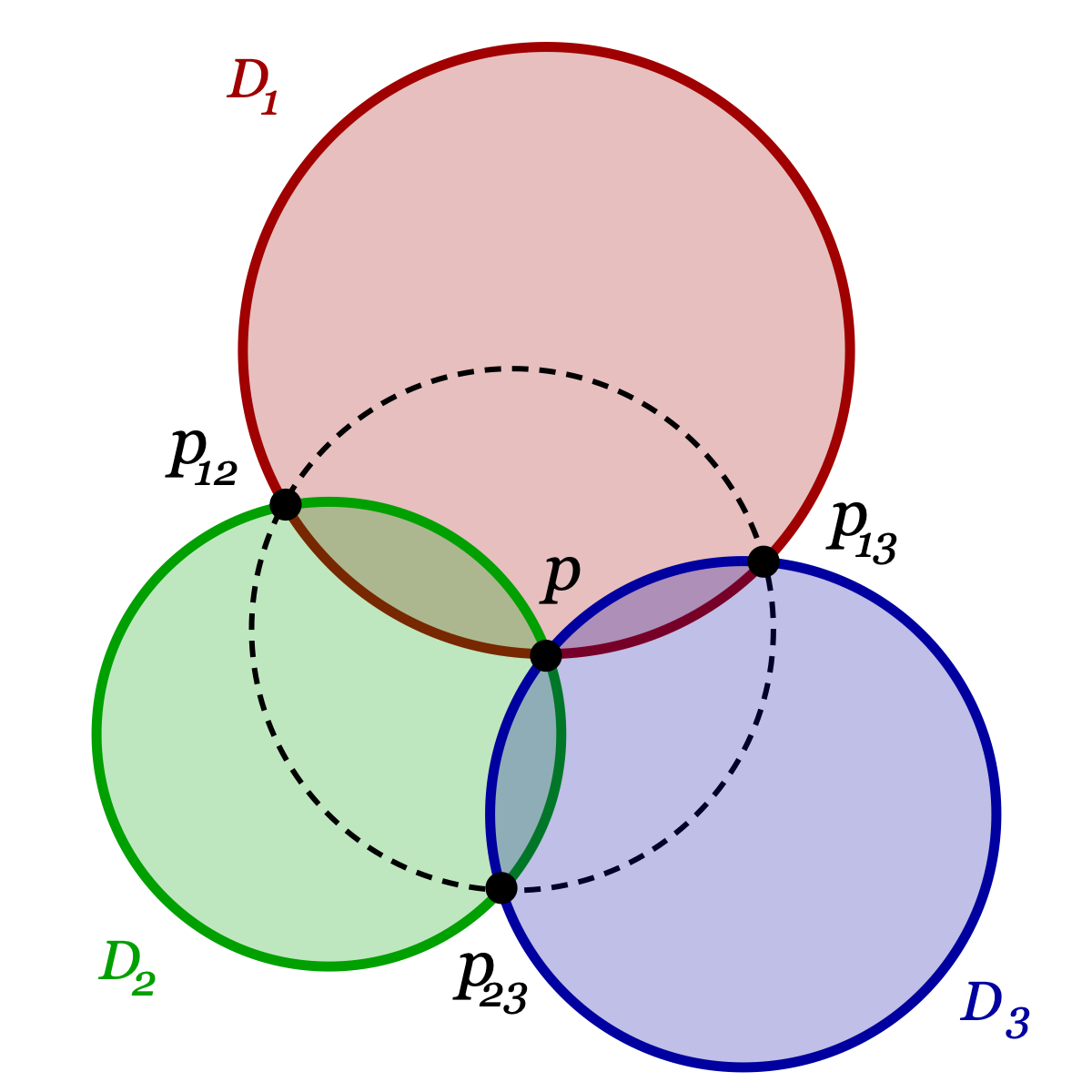}
	\subcaption{}
  \end{minipage}
  \begin{minipage}[b]{0.3\textwidth}
	  \includegraphics[width=\textwidth]{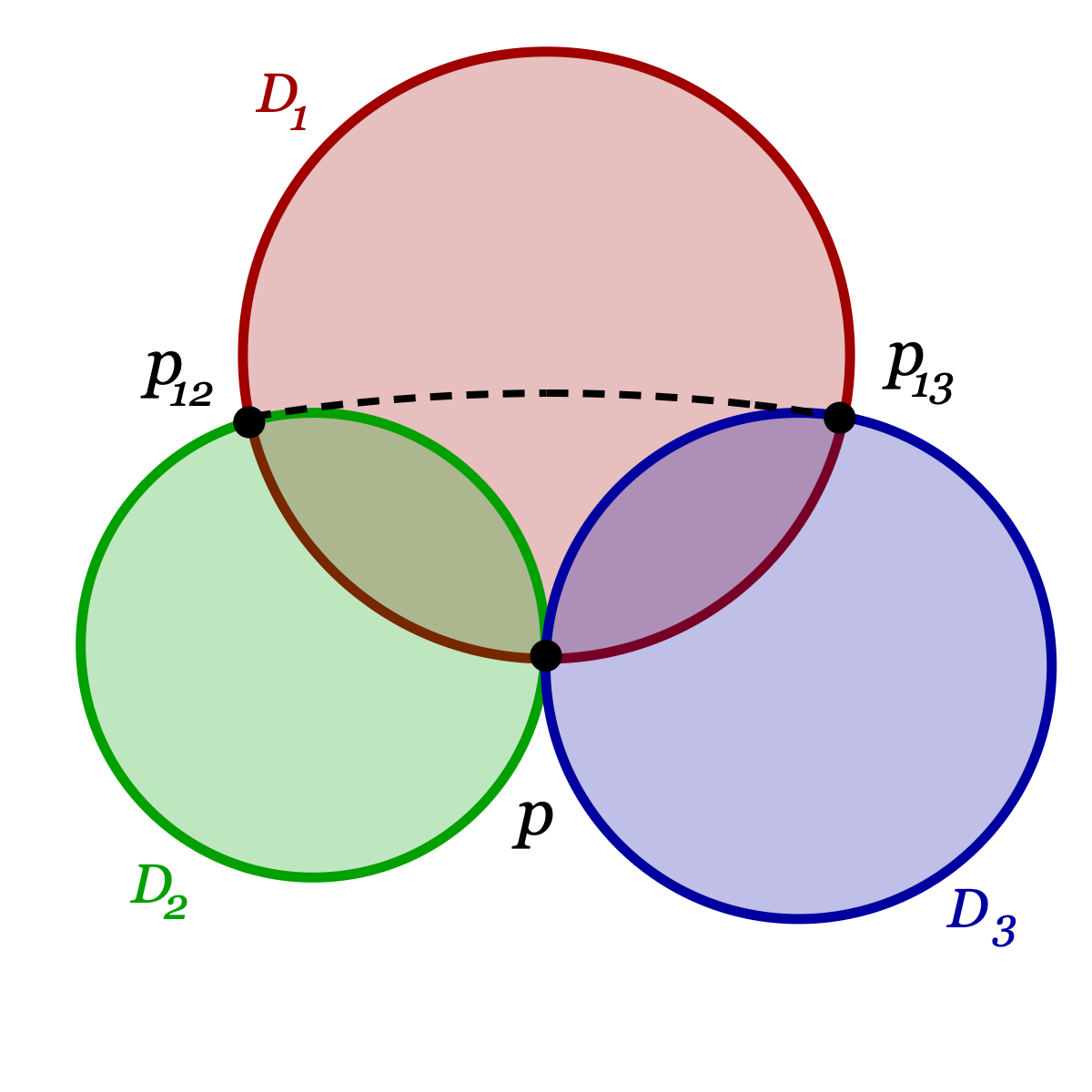}
	  \subcaption{}
  \end{minipage}
  \begin{minipage}[b]{0.3\textwidth}
  	\includegraphics[width=\textwidth]{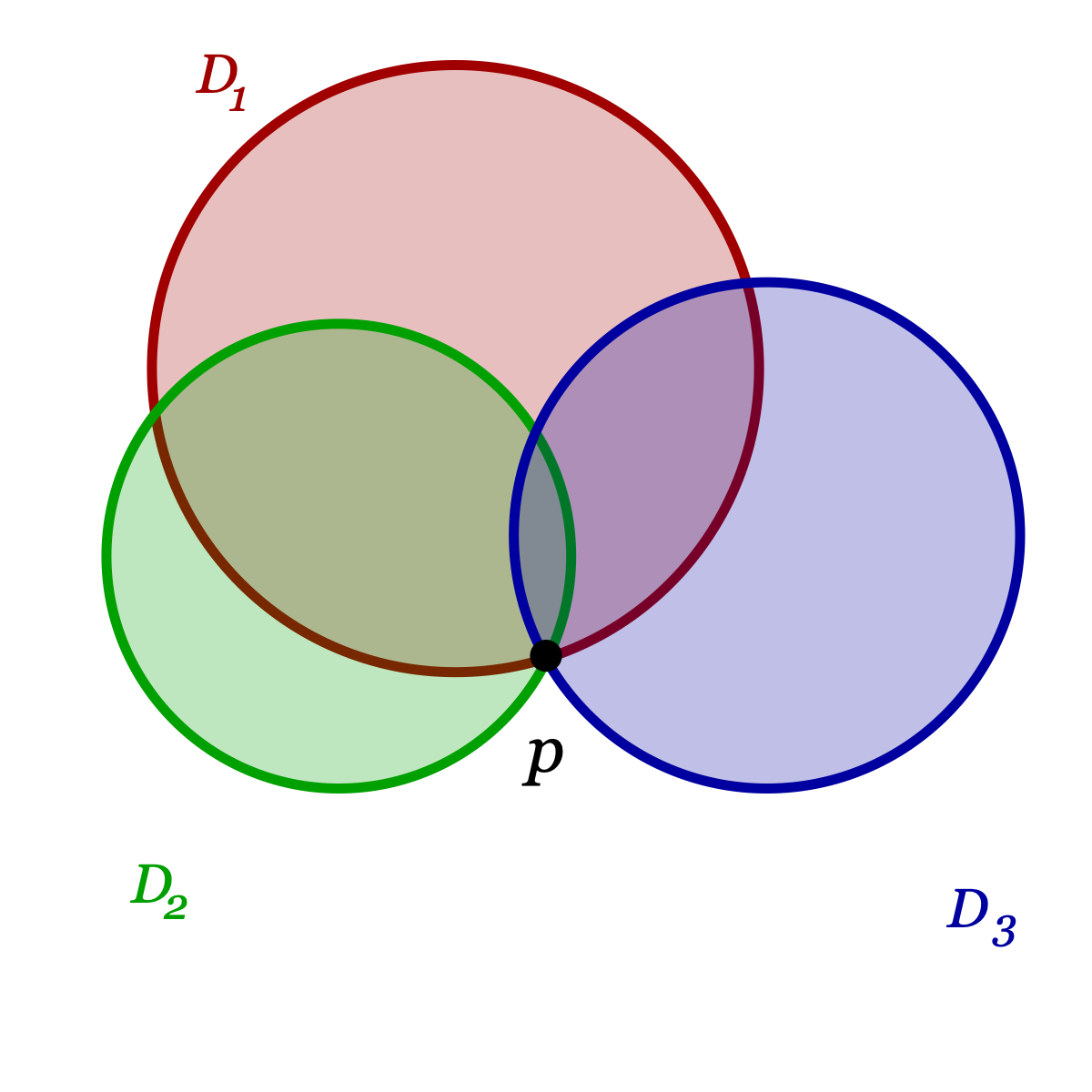}	
  	\subcaption{}
  \end{minipage}
  \caption{The three cases of parabolic triples. Case (b) occur if and only if the inversive distance between $D_1$ and both $D_2$ and $D_3$ is 0 and the other two are tangent at $p$. Case three cannot occur with all inversive distances in $[0, 1]$.}
  \label{fig:paraboliccases}
\end{figure}

\begin{Lemma}\label{lem:paraboliccases}
	Let $D_1$, $D_2$, and $D_3$ be three disks that meet at a point $p$ such that $d(D_i, D_j)\in [0,1]$ for $i\neq j$, $i, j = 1, 2, 3$. Then either $p$ is on the interior of $D_1\cup D_2 \cup D_3$ and no two of the disks are tangent or $p$ is the point of tangency between two of the disks which both meet the third disk at a right angle. 
\end{Lemma}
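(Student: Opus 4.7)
The plan is to reduce to a picture in $\mathbb{C}$ by stereographic projection from the common point $p$, after which the three disks become three open half-planes and the question becomes a piece of elementary combinatorics on the unit circle of asymptotic directions.

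First I would observe that because each $\partial D_i$ contains $p$, stereographic projection $\sigma$ from $p$ sends each $\partial D_i$ to a straight line $L_i\subset\mathbb{C}$ and each $D_i$ to an open half-plane $H_i$ bounded by $L_i$. Since $\sigma$ preserves inversive distance, $d(H_i,H_j)=d(D_i,D_j)\in[0,1]$ and $p$ corresponds to $\infty$, which lies on the boundary of each $H_i$ inside $\mathbb{C}\cup\{\infty\}$. Attach to each half-plane its unit inward normal $\hat n_i\in\mathbb{S}^1$. A direct two-line computation (using that the overlap angle of two half-planes is the interior angle at a corner of their intersection, and the relation $d=\cos\theta$) gives $\theta_{ij}=\pi-\angle(\hat n_i,\hat n_j)$ where $\angle(\cdot,\cdot)\in[0,\pi]$. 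The hypothesis $\theta_{ij}\in[0,\pi/2]$ therefore forces $\angle(\hat n_i,\hat n_j)\in[\pi/2,\pi]$, with tangency of $D_i$ and $D_j$ at $p$ corresponding to $\hat n_j=-\hat n_i$.

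Next I would translate the condition that $p$ lies interior to $D_1\cup D_2\cup D_3$ into a condition on the normals. A punctured neighborhood of $\infty$ is of the form $\{|z|>R\}$, and the set of asymptotic directions along which $H_i$ is eventually reached is the open arc $A_i=\{\hat v\in\mathbb{S}^1:\hat v\cdot\hat n_i>0\}$ of length $\pi$. Hence $p$ is interior to the union iff $A_1\cup A_2\cup A_3=\mathbb{S}^1$, equivalently iff no closed half-plane of $\mathbb{R}^2$ contains all three $\hat n_i$.

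Finally I would run the case analysis: list the normals in cyclic order and let $\alpha_1,\alpha_2,\alpha_3\geq 0$ be the consecutive gaps, summing to $2\pi$. The pairwise angular distance between two adjacent normals is $\min(\alpha_k,2\pi-\alpha_k)$, so the constraint $\angle(\hat n_i,\hat n_j)\geq\pi/2$ restricts the gaps. A strict $\alpha_k>\pi$ is impossible, since it would force the other two pairwise distances to be $\alpha_j$ and $\alpha_i$ with $\alpha_i+\alpha_j=2\pi-\alpha_k<\pi$, contradicting both being $\geq\pi/2$. If all $\alpha_k<\pi$ then each pairwise distance equals the corresponding $\alpha_k$, forcing $\alpha_k\in[\pi/2,\pi)$; here no two normals are antipodal, so no two disks are tangent, and $p$ is in the interior. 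The remaining case is $\alpha_k=\pi$ for some $k$; the inequalities on the other two gaps then force them both to equal $\pi/2$, which says two disks are tangent at $p$ with antipodal normals, and the third has its normal perpendicular to theirs, i.e.\ meets both at right angles. The main obstacle is getting the orientation-sensitive relation $\theta_{ij}=\pi-\angle(\hat n_i,\hat n_j)$ right and handling the degenerate tangent/parallel configurations uniformly; once that is in place the combinatorics on $\mathbb{S}^1$ is routine.
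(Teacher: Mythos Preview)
Your proof is correct and follows essentially the same approach as the paper: both send $p$ to $\infty$ by stereographic projection so that the three disks become half-planes, then reduce the statement to elementary angle-chasing in the plane. The only difference is bookkeeping in that final step---the paper splits into three pictorial cases based on the triangle cut out by the three boundary lines, while you parametrize by the inward unit normals $\hat n_i\in\mathbb{S}^1$ and do a clean gap analysis; your version avoids the separate figures but is otherwise the same argument.
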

\begin{proof}
Stereographically project the disks to the extended complex plane $\widehat{\mathbb{C}}$. Then apply a Möbius transformation taking $p$ to $\infty$. The result is that all three disks become lines in $\mathbb{C}$ as in figure~\ref{fig:parabolicproof}. If it exists, let $p_{ij}$ denote the point of intersection between $D_i$ and $D_j$ that is not $p$ and $\theta_{ij}$ denote the oriented angle of overlap between the two disks at that point (which is 0 when that point $p_{ij} = p$ in case (b)). Recall that the inversive distance is the angle of overlap between the disks, and stereographic projections and Möbius transformations are conformal, so $d(D_i, D_j) = \cos\theta_{ij}$. In case (a) the three angles are equal to the interior angle of the triangle with vertices $p_{12}$, $p_{13}$, and $p_{23}$ and thus all inversive distances are in $[0, 1]$ precisely when this triangle is non-obtuse. For case (b), without loss of generality assume that $p$ is the point of tangency between $D_2$ and $D_3$ and thus $p_{23} = p = \infty$. In this case $\theta_{12} + \theta_{13} = \pi$ and thus either both angles are $\pi / 2$ and thus both inversive distances $d(D_1, D_2) = d(D_1, D_3) = 0$ or one of the angles is greater than $\pi/2$ and its corresponding inversive distance is less than 0. Finally, in case (c) without loss of generality assume $p_{23}$ is on the interior of $D_1$. Then the angles $\theta_{12}$ and $\theta_{13}$ are exterior angles along the same supporting line of a triangle and therefore $\theta_{12} + \theta_{13} > \pi$. Thus at least one of them is greater than $\pi/2$ and its corresponding inversive distance is therefore less than $0$. 
\begin{figure}[hbt]
  \centering
  \begin{minipage}[b]{0.3\textwidth}
  	\includegraphics[width=\textwidth]{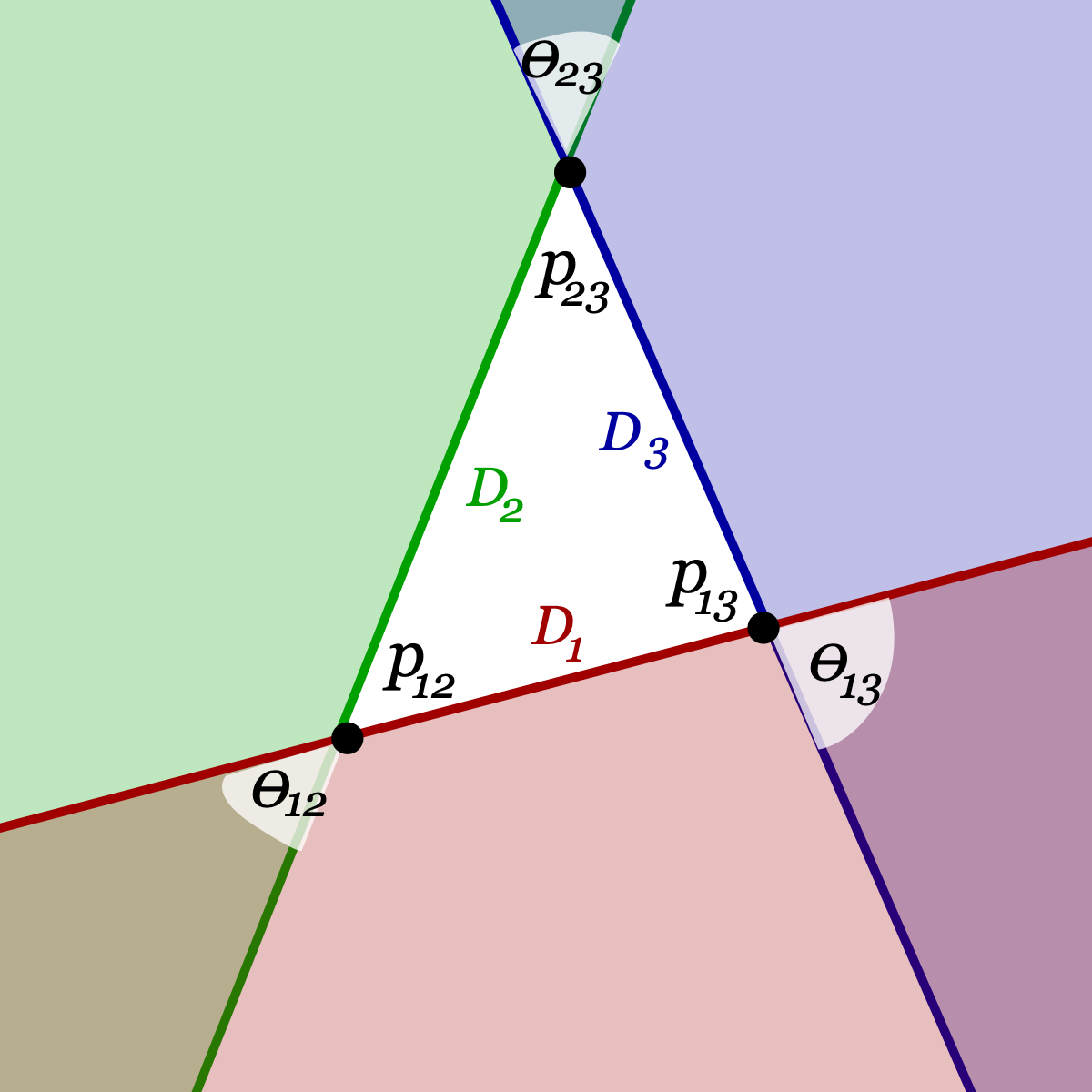}
	\subcaption{}
  \end{minipage}
  \begin{minipage}[b]{0.3\textwidth}
	  \includegraphics[width=\textwidth]{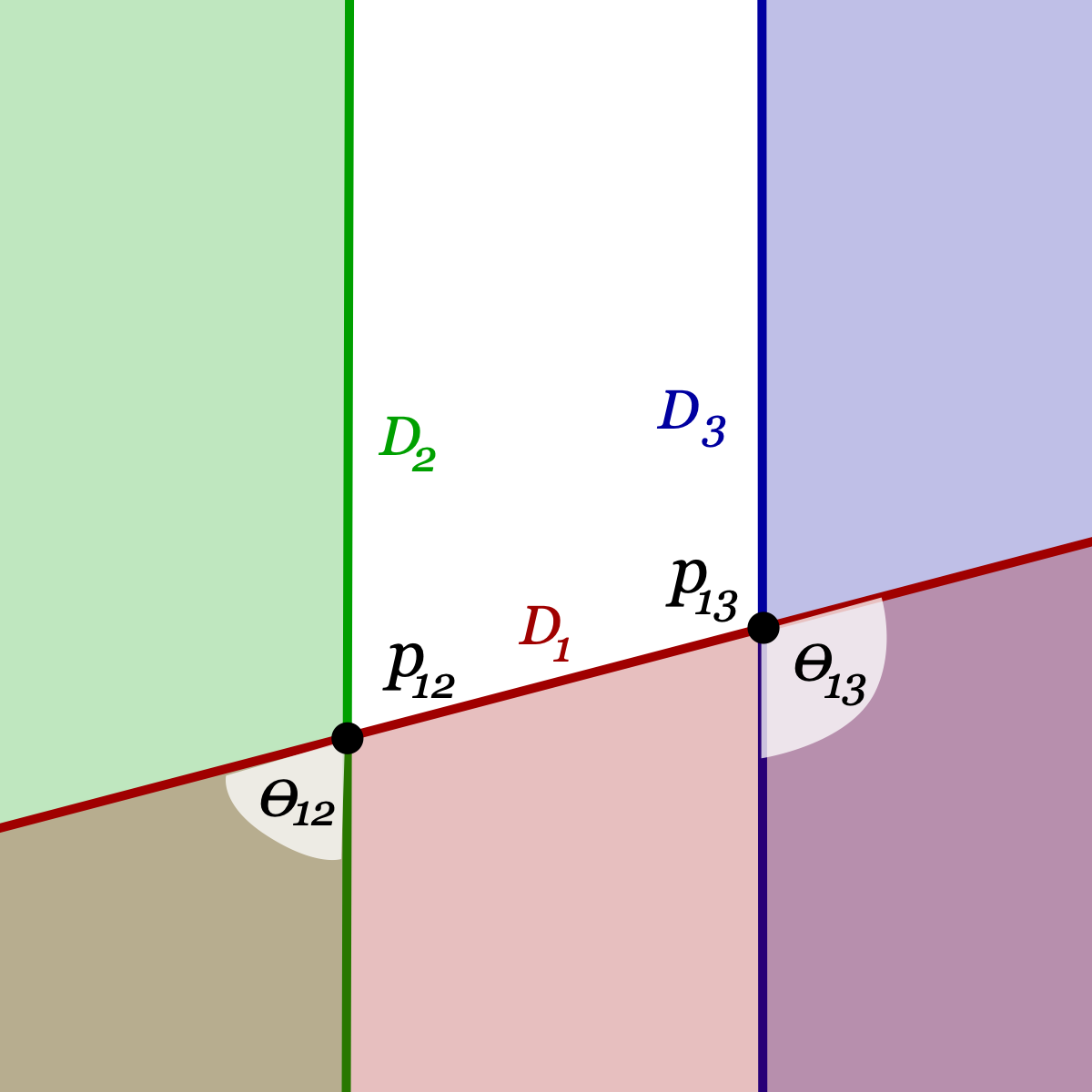}
	  \subcaption{}
  \end{minipage}
  \begin{minipage}[b]{0.3\textwidth}
  	\includegraphics[width=\textwidth]{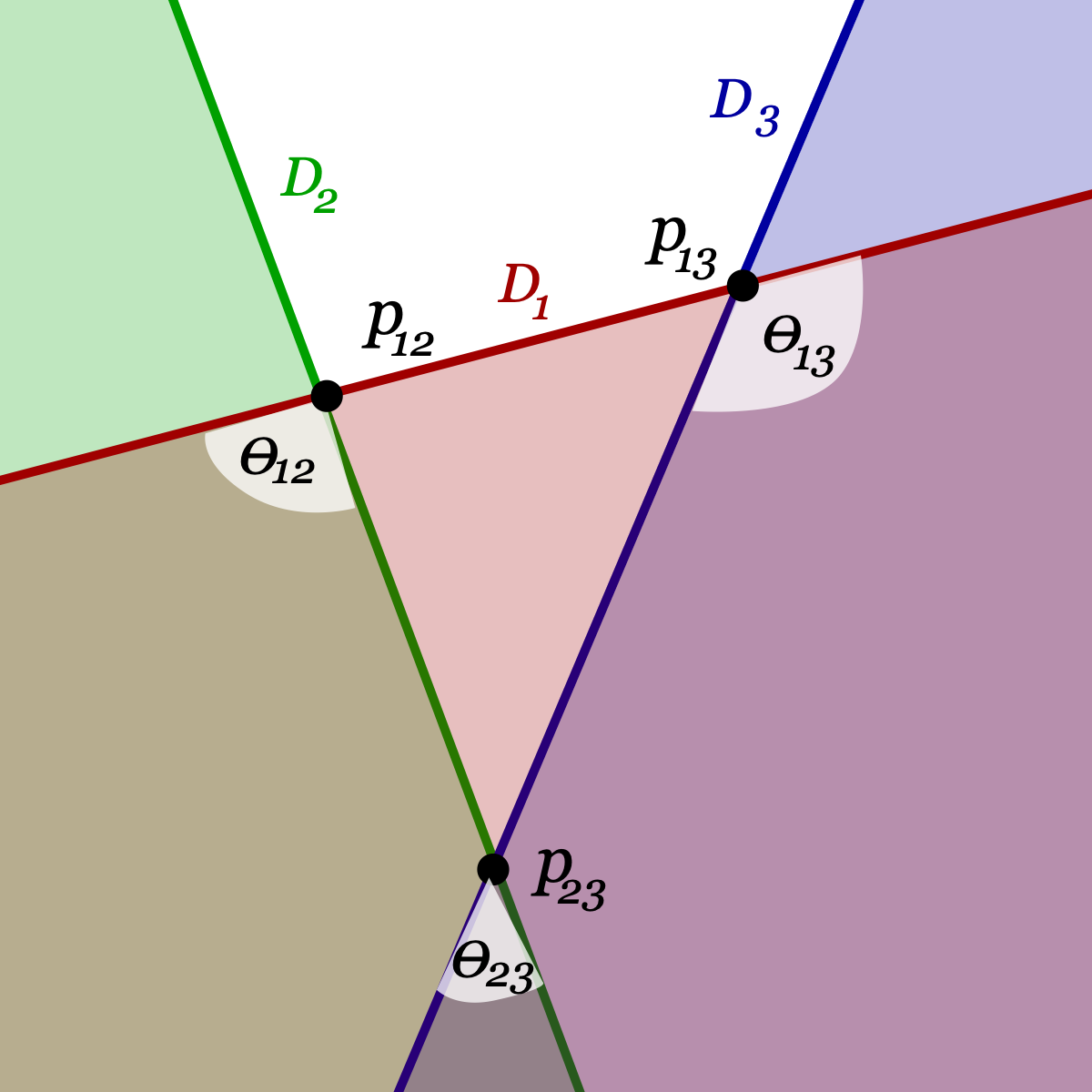}	
  	\subcaption{}
  \end{minipage}
  \caption{The setup for the proof of lemma~\ref{lem:paraboliccases}. Moving the common point of the three circles to $\infty$ by a Möbius transformation turns the boundary disks into half-planes. (a) and (b) are possible with shallow overlaps and show that the sum of the overlap angles is $\pi$. (c) is not possible with shallow overlaps. }
  \label{fig:parabolicproof}
\end{figure}
\end{proof}

An immediate corollary of the proof above follows.
\begin{Corollary}\label{cor:tripleanglesum}
	If $D_1$, $D_2$, and $D_3$ are three disks with mutually shallow overlaps, then the sum of their overlaps is $\pi$. 
\end{Corollary}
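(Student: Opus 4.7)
The corollary asserts that for a parabolic triple of disks (three meeting at a common point $p$) with all pairwise inversive distances in $[0,1]$, the three overlap angles sum to exactly $\pi$. Since this is billed as an immediate corollary of the proof of Lemma~\ref{lem:paraboliccases}, my plan is to simply read off the angle sum from the case analysis that has already been carried out there, rather than redoing any geometry from scratch.

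The plan is to re-invoke the reduction used in the lemma: stereographically project to $\widehat{\mathbb{C}}$, then apply a Möbius transformation sending the common point $p$ to $\infty$. Since stereographic projection and Möbius transformations preserve inversive distances (and hence overlap angles), this is a lossless reduction. Under this reduction the three disks become closed half-planes bounded by lines in $\mathbb{C}$, and one only has to check each of the three cases pictured in figure~\ref{fig:parabolicproof}. In case (a), the three boundary lines are pairwise nonparallel and bound a (non-degenerate) triangle; the overlap angles $\theta_{ij}$ are precisely the interior angles of this triangle, which sum to $\pi$. In case (b), two of the disks are tangent at $p$, so their boundary lines are parallel; the corresponding overlap angle is $0$, while the other two angles are the two angles that a transversal makes on one side of a line, so they sum to $\pi$, giving a total of $\pi$ again. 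Case (c) is already ruled out by the shallow-overlap hypothesis in the proof of the lemma, so it need not be considered.

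I would then conclude by stating that in every case allowed by $d(D_i,D_j)\in[0,1]$ the angle sum is $\pi$. The main (minor) subtlety is simply being careful in case (b) that the two overlap angles with the third disk are measured on the correct sides so that they are genuinely supplementary rather than equal; this is exactly the orientation bookkeeping done in the lemma's proof, so no new work is needed. There is no serious obstacle here: the corollary is essentially a restatement of what was computed, and the only thing one needs to do is extract the numerical consequence ($\theta_{12}+\theta_{13}+\theta_{23}=\pi$) from the two surviving cases.
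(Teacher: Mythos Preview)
Your proposal is correct and is exactly what the paper does: the corollary is stated with no separate proof, merely as an immediate consequence of the case analysis in the proof of Lemma~\ref{lem:paraboliccases}, and you have correctly read off the angle sum $\pi$ from cases (a) and (b) of that proof while noting case (c) is excluded by shallowness.
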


We now consider adding a fourth disk. 

\begin{Lemma}\label{lem:parabolic4diskconfig}
The only configurations of four disks contained in a parabolic plane that have only shallow overlaps are configurations comprised of two pairs of tangent disks which are mutually orthogonal. 
\end{Lemma}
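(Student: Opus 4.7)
The strategy is to move the problem to a standard model in $\mathbb{C}$ and reduce it to a short angular packing fact on $S^1$. A parabolic c-plane has a lightlike defining normal $C_\perp$, whose ray meets $\mathbb{S}^2$ in a single point $p$; every disk in the c-plane contains $p$ on its boundary. Using the Möbius-equivalence of parabolic c-planes established at the end of \S\ref{sec:coaxialfamilies} together with stereographic projection from $p$, I send $p\mapsto\infty$, so each $D_i$ becomes a half-plane in $\mathbb{C}$. Let $\mathbf{n}_i\in S^1$ denote its inward unit normal.

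The shallow-overlap hypothesis converts to a condition on the $\mathbf{n}_i$. For two half-planes whose inward normals make angle $\phi_{ij}\in[0,\pi]$, the interior overlap angle is $\theta_{ij}=\pi-\phi_{ij}$, so $\theta_{ij}\in[0,\pi/2]$ is equivalent to $\phi_{ij}\in[\pi/2,\pi]$, i.e.\ $\langle\mathbf{n}_i,\mathbf{n}_j\rangle\leq 0$. In particular no two normals can coincide (that would give inner product $+1$), so the four $\mathbf{n}_i$ are pairwise distinct.

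The combinatorial heart of the proof is the claim that four distinct unit vectors in $\mathbb{R}^2$ with pairwise non-positive inner products form two antipodal pairs at a right angle. Sort the normals cyclically on $S^1$ and let $g_1,g_2,g_3,g_4$ be the successive angular gaps, so that $\sum g_k = 2\pi$. The angular distance between two cyclically adjacent normals is $\min(g_k,2\pi-g_k)\geq \pi/2$, forcing $g_k\in[\pi/2,3\pi/2]$; combined with $\sum g_k = 2\pi$ this immediately forces every $g_k=\pi/2$. Hence $\{\mathbf{n}_1,\ldots,\mathbf{n}_4\}=\{\mathbf{v},\mathbf{v}^\perp,-\mathbf{v},-\mathbf{v}^\perp\}$ for some unit $\mathbf{v}$.

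Finally, I translate the conclusion back into the circle picture. A pair with $\mathbf{n}_i=-\mathbf{n}_j$ has parallel oppositely-oriented bounding lines, so the two boundary circles of $D_i$ and $D_j$ are tangent at $p$; distinctness of the boundary circles excludes the degenerate complementary sub-case, and in the remaining configurations one reads off $d(D_i,D_j)=1$ from the infinite-radius formula of \S\ref{sec:preliminaries}, giving a pair of tangent disks. A pair drawn with one member from each antipodal pair has $\phi_{ij}=\pi/2$ and hence $d(D_i,D_j)=\cos(\pi/2)=0$, i.e.\ orthogonal. The main obstacle is only bookkeeping in this final step---correctly identifying which configurations of parallel-bounded half-planes yield $d=1$ versus the non-shallow value $d=-1$---after which the geometry reduces to the angular pigeonhole on $S^1$.
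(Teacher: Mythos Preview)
Your proof is correct and takes a genuinely different, cleaner route than the paper's. Both arguments begin identically by sending the common point $p$ to $\infty$ so that the four disks become half-planes in $\mathbb{C}$. From there the paper splits into cases: if some pair is tangent it invokes Lemma~\ref{lem:paraboliccases} to force the remaining structure, and if no pair is tangent it runs an orientation/consistency argument on the four triangles cut out by the boundary lines to reach a contradiction. You instead encode each half-plane by its inward unit normal $\mathbf{n}_i\in S^1$ and observe the identity $d(D_i,D_j)=\cos\theta_{ij}=-\langle\mathbf{n}_i,\mathbf{n}_j\rangle$, so that ``all overlaps shallow'' becomes ``all pairwise inner products $\le 0$.'' The problem then collapses to the elementary fact that four distinct unit vectors in $\mathbb{R}^2$ with pairwise non-positive inner products must be $\{\mathbf v,\mathbf v^\perp,-\mathbf v,-\mathbf v^\perp\}$, which your gap pigeonhole ($g_k\ge\pi/2$, $\sum g_k=2\pi$) proves in one line. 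This avoids the paper's triangle-orientation casework entirely and makes the structure of the conclusion (two antipodal pairs at right angles) transparent. The only mild caveat is the degenerate antipodal sub-case where two half-planes share a boundary line; you flag this, and in the applications of the lemma the disks always have distinct boundaries, so it is harmless.
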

\begin{proof}
	We begin the same as in the proof of lemma~\ref{lem:paraboliccases} by stereographically projecting all disks to the sphere and moving the common point $p$ to $\infty$ by a Möbius transformation. The statement is now a statement about half-planes in $\mathbb{C}$: the only configuration of half-planes that all have mutually shallow overlap angles are the configurations of two pairs of disjoint half-planes with parallel boundary lines that meet each other orthogonally. Let $l_i$ denote the boundary line and $h_i$ denote the half-plane corresponding to each disk $D_i$.
	
	We first analyze the case where there is at least one tangency. Without loss of generality, assume $D_2$ and $D_3$ are tangent. Then $l_2$ and $l_3$ are parallel and by lemma~\ref{lem:paraboliccases} since all overlaps are shallow, both $l_1$ and $l_4$ meet $l_2$ and $l_3$ orthogonally. Then $l_1$ and $l_4$ are parallel so $\partial D_1$ and $\partial D_4$ are tangent and thus $d(D_1, D_4) \in \{-1, 1\}$. 
	
	Now for contradiction assume no tangencies occur. Since all disks overlap at a real angle each of the lines $l_1\dots l_4$ intersect. The intersection of the complements of $h_1$, $h_2$, and $h_3$ is a triangle as in (a) of figure~\ref{fig:parabolicproof}. By shallowness, this triangle is acute. 
	
	Now consider any such triangle made by three mutually intersecting lines and choose an orientation for each line. By convention let the left-hand side of the orientated line be its half-space. Say that two of the lines \term{agree} at a corner of the triangle if there orientation is consistent along the boundary of the triangle, either counter-clockwise or clockwise. Note that when two lines agree, the oriented angle over overlap between their half-spaces equals the angle of the triangle. When the two orientations disagree, the angle of overlap between the half-spaces is $\pi$ minus the angle formed on the interior of the triangle at each corner. We observe that either all three corners of the triangle agree or exactly two disagree with any orientation of the lines we choose.
	
	Now take the half-plane orientations induced by our disks.  Because our half-plane overlaps are shallow it is not possible that two corners disagree, because a corner that disagrees with our orientation must have an angle of at least $\pi/2$ and no triangle has two angles both greater than or equal to $\pi/2$. Thus any triangle formed by any three shallow overlapping half-planes must agree with the orientation of those half-planes, which means that the orientation of the lines must be clockwise or counter-clockwise around the triangle.
	
	Now consider adding the fourth half-plane $h_4$ to our arrangement of $h_1$, $h_2$, and $h_3$. We observe that either $l_4$ does not meet any of the corners of the triangle formed by $l_1$, $l_2$, and $l_3$, or it meets exactly one corner. In the first case any three of the lines defines a triangle and by the discussion above, the orientations on the lines induced by $h_1\dots h_4$ must be counter-clockwise or clockwise around each of these triangles. However, there is no orientation of four lines in general position that is consistent around each of the four triangles formed by any three of the lines. Similarly, when $l_4$ passes through a corner of the triangle formed by $l_1$, $l_2$, and $l_3$ there are three triangles formed, one of which is the union of the other two. But then again it is not possible to orient the lines so that the orientation is consistent across all three triangles. But if all overlaps were shallow, then any three of the half-planes must define a consistent orientation around the triangle formed by their boundary lines. Therefore this case is not possible, and we must have two pairs of tangent disks contained in orthogonal coaxial families.
\end{proof}

As an immediate corollary we have: 

\begin{Corollary}\label{cor:no5shallowinparabolic}
	There are no five disks whose boundary circles meet at a common point such that the pairwise overlaps between all disks are shallow.
\end{Corollary}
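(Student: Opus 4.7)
The plan is to derive a contradiction by applying Lemma \ref{lem:parabolic4diskconfig} to three carefully chosen four-element sub-collections of the hypothesized five disks. Suppose for contradiction that $D_1,\dots,D_5$ have boundary circles meeting at a common point $p$ with all pairwise overlaps shallow. Following the setup of the proof of Lemma \ref{lem:paraboliccases}, stereographically project to $\widehat{\mathbb{C}}$ and send $p$ to $\infty$ by a Möbius transformation, so each $D_i$ becomes a half-plane with boundary line $\ell_i$; the shallow overlap hypothesis is preserved, and parallel lines correspond to disks tangent at $p$.

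First I would apply the lemma to $\{D_1,D_2,D_3,D_4\}$: after relabeling, this produces $\ell_1\parallel\ell_2$, $\ell_3\parallel\ell_4$, with the two tangent pairs mutually perpendicular, so $\ell_1\perp\ell_3$. Next I would apply the lemma to $\{D_1,D_2,D_3,D_5\}$ to obtain another decomposition into two tangent (parallel) pairs. The pair containing $\ell_1$ cannot be $\{\ell_1,\ell_3\}$ (since $\ell_1\perp\ell_3$) nor $\{\ell_1,\ell_5\}$ (since then the complementary pair $\{\ell_2,\ell_3\}$ would force $\ell_2\parallel\ell_3$, contradicting $\ell_1\parallel\ell_2$ together with $\ell_1\perp\ell_3$). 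Hence the partition must be $\{\ell_1,\ell_2\}\cup\{\ell_3,\ell_5\}$, giving $\ell_5\parallel\ell_3$, and therefore $\ell_3,\ell_4,\ell_5$ are pairwise parallel. Finally, I would apply the lemma to $\{D_1,D_3,D_4,D_5\}$: any partition of these four lines into two pairs must place $\ell_1$ together with one of $\ell_3,\ell_4,\ell_5$, each of which is perpendicular to $\ell_1$, so no decomposition into two parallel pairs exists. This contradicts Lemma \ref{lem:parabolic4diskconfig} and completes the proof.

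The argument is essentially combinatorial bookkeeping on top of the previous lemma, so no substantial obstacle arises. The only delicate point is choosing the three 4-subsets in the correct order: the first subset installs the rigid ``two orthogonal tangent pairs'' skeleton on four of the lines, the second subset forces $\ell_5$ to lie in the same direction as $\ell_3$ and $\ell_4$, and the third subset then pits the lone transverse line $\ell_1$ against three mutually parallel lines, a configuration that admits no partition into two parallel pairs.
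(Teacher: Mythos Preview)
Your argument is correct and follows the same approach as the paper, which states the result as an immediate corollary of Lemma~\ref{lem:parabolic4diskconfig} without further proof; your three applications of the lemma to well-chosen 4-subsets are a valid and careful elaboration of why the corollary is indeed immediate. A slightly quicker phrasing of the same idea: the lemma forces the directions of the five boundary lines to lie in at most two perpendicular classes, and then pigeonhole puts three lines in one class, so those three together with any line from the other class cannot be partitioned into two parallel pairs.
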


\paragraph{Coplanarity of four disks.} Let $D_1$, $D_2$, $D_3$, and $D_4$ be four distinct disks and assume that the pairwise overlap angles between disks in the two triples $(D_1, D_2, D_3)$ and $(D_2, D_3, D_4)$ fall between 0 and $\pi/2$. When are the four disks contained in the same c-plane? Lemma~\ref{lem:cplanarity} characterizes when all four disks are coplanar. 

\begin{Lemma}\label{lem:cplanarity}
	Let $(D_1, D_2, D_3)$ and $(D_2, D_3, D_4)$ be two triples of disks such that all pairwise inversive distances in the triples are shallow and no three of $D_1\dots D_4$ is contained within a coaxial family. 
	Let $\Pi$ be the c-plane defined by the orthodisk of $(D_1, D_2, D_3)$.
	Then:
	\begin{enumerate}
		\item If $\Pi$ is hyperbolic and $D_4$ is contained in $\Pi$ then $d(D_1, D_4) < 0$. 
		\item If $\Pi$ is parabolic and $D_4$ is contained in $\Pi$ then either $d(D_2, D_3) = d(D_1, D_4) = 1$ and all other pairwise inversive distances are 0 or $d(D_1, D_4) < 0$. 
		\item If $\Pi$ is elliptic and $D_4$ is contained in $\Pi$ then either $d(D_1, D_4) \geq 0$ and $D_1\cup D_2 \cup D_3 \cup D_4 = \mathbb{S}^2$ or $d(D_1, D_4) < 0$. Furthermore, in the first case there does not exist any other disk that has shallow overlaps with each of $D_1\dots D_4$.
	\end{enumerate}
\end{Lemma}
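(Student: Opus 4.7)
The plan is to treat the three cases separately. In all cases, the hypothesis $D_4 \in \Pi = \mathrm{span}(D_1,D_2,D_3)$ together with the normalization $\langle D_4, D_4\rangle_{1,3} = -1$ forces the $4\times 4$ Gram matrix $G$ of $(D_1,D_2,D_3,D_4)$ to have rank at most three, i.e., $\det G = 0$. Writing $a = \langle D_1,D_2\rangle$, $b = \langle D_1,D_3\rangle$, $c = \langle D_2,D_3\rangle$, $d = \langle D_2,D_4\rangle$, $e = \langle D_3,D_4\rangle$, and $f = \langle D_1,D_4\rangle$ for the six pairwise inner products, the Schur complement expansion of $\det G$ about the $3\times 3$ block $G_{123}$ (which is invertible in the hyperbolic and elliptic cases) reduces the question to the sign analysis of a single scalar equation in $f$.

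For Case 1 (hyperbolic), a direct computation of the Schur complement gives the quadratic $A f^2 + 2 B f + C = 0$ with $A = 1 - c^2 \ge 0$, $B = (a+bc)\,d + (b+ac)\,e \ge 0$, and $C = (1-b^2)\,d^2 + 2(c+ab)\,de + (1-a^2)\,e^2 + \Delta$, where $\Delta = \det G_{123} = a^2+b^2+c^2+2abc-1 > 0$ in the hyperbolic case. Every term of $C$ is non-negative under the shallowness assumption $a,b,c,d,e\in[0,1]$, so $C \ge \Delta > 0$. Hence the product of roots $C/A$ is positive and the sum $-2B/A$ is non-positive, which forces both real roots to be strictly negative provided $B > 0$. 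To rule out $B = 0$: if $d,e > 0$, vanishing would force $a + bc = b + ac = 0$ and hence $a = b = 0$, contradicting $\Delta > 0$; if $d = e = 0$, the equation reduces to $A f^2 + C = 0$ with $A,C > 0$, which admits no real solution and contradicts the existence of $D_4$. The degenerate subcase $c = 1$ (where $A = 0$) is handled by reading the equation as linear, $2Bf + C = 0$, where the same positivity of $B$ and $C$ gives $f = -C/(2B) < 0$. This algebraic case analysis is where the main bookkeeping burden lies.

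For Case 2 (parabolic), a parabolic c-plane consists precisely of disks whose boundary circles all pass through a common point $p$. In particular $\partial D_1$ and $\partial D_4$ both contain $p$, so the circles meet and $d(D_1,D_4) \le 1$. If $d(D_1,D_4) \ge 0$, then all six pairwise inversive distances among $D_1,\dots,D_4$ lie in $[0,1]$, and Lemma~\ref{lem:parabolic4diskconfig} applies to force the configuration to be two tangent pairs mutually orthogonal, i.e., $d(D_2,D_3) = d(D_1,D_4) = 1$ with the remaining four pairwise inversive distances equal to $0$. Otherwise $d(D_1,D_4) < 0$.

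For Case 3 (elliptic), a Möbius transformation takes $\Pi$ to the family of great circles on $\mathbb{S}^2$, so each $D_i$ becomes a hemisphere with outward unit normal $\mathbf{n}_i \in \mathbb{R}^3$ and $\langle D_i,D_j\rangle_{1,3} = -\mathbf{n}_i\cdot\mathbf{n}_j$. Shallowness in the two triples becomes $\mathbf{n}_i \cdot \mathbf{n}_j \le 0$ for the five corresponding pairs, and the no-three-coaxial condition becomes no three $\mathbf{n}_i$ coplanar. If $d(D_1,D_4) < 0$ we are done. Otherwise all six pairwise dot products are $\le 0$, and I would show that $\{\mathbf{n}_i\}$ positively spans $\mathbb{R}^3$: were the four vectors instead all contained in a closed half-space $\{\mathbf{x}\cdot\mathbf{u}\ge 0\}$, projecting to $\mathbf{u}^\perp$ would yield four non-zero planar vectors with pairwise non-positive dot products, which (by the cyclic angular sum $4\cdot \pi/2 = 2\pi$) is possible only as two antipodal pairs meeting at right angles; unwinding the orthogonality conditions for the projected pairs forces at least three of the $\mathbf{n}_i$ to lie in $\mathbf{u}^\perp$ and hence be coplanar, contradicting the hypothesis. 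Positive spanning gives $\sum \lambda_i \mathbf{n}_i = \mathbf{0}$ with each $\lambda_i > 0$, so every $\mathbf{p}\in\mathbb{S}^2$ satisfies $\mathbf{p}\cdot\mathbf{n}_i \ge 0$ for some $i$, i.e., the four hemispheres cover $\mathbb{S}^2$. For the ``furthermore'' clause, any additional real disk $D_5$ with shallow overlaps to each $D_i$ has normalized coordinates $(a_5,b_5,c_5,d_5)$ satisfying $(b_5,c_5,d_5)\cdot\mathbf{n}_i \le 0$; pairing with $\sum \lambda_i \mathbf{n}_i = \mathbf{0}$ forces $(b_5,c_5,d_5)\cdot\mathbf{n}_i = 0$ for each $i$, and linear independence of three of the $\mathbf{n}_i$ gives $(b_5,c_5,d_5) = \mathbf{0}$, leaving $a_5^2 = -1$ with no real solution.
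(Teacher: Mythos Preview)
Your Cases 2 and 3 follow essentially the same route as the paper: Case 2 defers to the parabolic four-disk lemma, and Case 3 passes to great circles and their unit normals in $\mathbb{R}^3$. The paper's Case 3 is terser than yours---it simply asserts that four unit vectors with pairwise non-positive dot products force the hemispheres to cover $\mathbb{S}^2$ and that no fifth such vector exists---whereas you supply an argument via positive spanning. One small slip in your Case 3: from the half-space/projection analysis you conclude that ``at least three of the $\mathbf{n}_i$ lie in $\mathbf{u}^\perp$,'' but in fact the constraints $\mu_1\mu_2=\mu_1\mu_4=\mu_2\mu_3=\mu_3\mu_4=0$ can leave only \emph{two} of the $\mu_i$ equal to zero (e.g.\ $\mu_2=\mu_4=0$ with $\mu_1,\mu_3>0$). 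What actually happens in that subcase is that those two vectors, being unit and lying in $\mathbf{u}^\perp$ along antipodal projected directions, are themselves antipodal ($\mathbf{n}_4=-\mathbf{n}_2$), so together with any third disk they span only a $2$-plane and hence form a coaxial triple. The contradiction with ``no three coaxial'' still goes through, just via antipodality rather than three vectors in $\mathbf{u}^\perp$.

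Your Case 1 is a genuinely different argument from the paper's. The paper treats the hyperbolic c-plane geometrically: it realizes the orthodisk as a Poincar\'e model of $\mathbb{H}^2$, turns each $D_i$ into a hyperbolic half-plane, and then runs a triangle-orientation argument (every shallow triple must induce a consistent cyclic orientation on its triangle, and no orientation of four lines can be consistent on all four sub-triangles). Your approach is purely algebraic: the coplanarity $\det G=0$ plus the Schur complement against $G_{123}$ gives the quadratic $Af^2+2Bf+C=0$, and shallowness makes all coefficients non-negative with $C\ge\Delta>0$, forcing any real root $f$ to be negative. Your route is shorter and entirely self-contained once the cofactor computation is done; the paper's route is more pictorial and explains \emph{why} the obstruction is combinatorial. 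One bookkeeping remark: in your $B=0$ discussion you cover $d,e>0$ and $d=e=0$ but not $d=0,\,e>0$ (or symmetrically); that subcase is harmless---$B=(b+ac)e=0$ forces $b=ac=0$, and then $\Delta=a^2+c^2-1\le 0$ contradicts hyperbolicity---but you should say so. Alternatively, note that when $A>0$ and $C>0$ the case $B=0$ gives a negative discriminant and hence no real root at all, contradicting the existence of $D_4\in\Pi$, so the separate $B>0$ verification is not really needed.
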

\begin{proof}

{\em Case 1:}
In this case, let $O^+$ be the positively oriented orthodisk defining $\Pi$. Since $\Pi$ is hyperbolic, this is a real disk. Assume $d(D_4, O^+) = 0$. Since all disks $D_1, \dots, D_4$ meet $O^+$ orthogonally, they correspond to hyperbolic half-planes in $\mathbb{H}^2$ obtained by treating the interior of $O^+$ as a Poincaré disk model of $\mathbb{H}^2$. This correspondence preserves overlap angles. Let $h_i$ denote the half-plane and $l_i$ denote the boundary line corresponding to each disk $D_i$. Let $h_i'$ denote the reflection of $h_i$ across $l_i$. The angle $\theta_{ij}$ of intersection between the half-planes $h_i$ and $h_j$ is given by $\theta_{ij} = \arccos d(D_i, D_j)$ for all $i, j = 1 \dots 4$, $i\neq j$. We now argue that it is not possible that all of these angles are between $0$ and $\pi/2$. 

By the overlap conditions on the triples $(D_1, D_2, D_3)$ and $(D_2, D_3, D_4)$ the lines $l_1$, $l_2$, and $l_3$ form a hyperbolic triangle, as do $l_2$, $l_3$, and $l_4$. Because neither triple is contained in a coaxial family, neither of the triples of lines $(l_1, l_2, l_3)$ and $(l_2, l_3, l_4)$ meet at a common point. Therefore $(l_1, l_2, l_3)$ and $(l_2, l_3, l_4)$ each define a hyperbolic triangle. Let $T_{ijk}$ denote the hyperbolic triangle formed by $l_i$, $l_j$, and $l_k$. Let $\hat{\theta}_{ij}$ denote the angle between $l_i$ and $l_j$ in the triangle $T_{ijk}$.  

We first claim that if $\theta_{ij}, \theta_{jk}, \theta_{ki}\in[0,\pi/2]$, then the interior of $T_{ijk}$ either equals $h_i\cap h_j \cap h_k$ or $h_i'\cap h_j'\cap h_k'$. Indeed if $T_{ijk} \subset h_i\cap h_j$ or $T_{ijk}\subset h_i' \cap h_j'$ then $\hat{\theta}_{ij} = \theta_{ij}$ is the angle between $l_i$ and $l_j$ in $T_{ijk}$. However, if one half-plane contains $T_{ijk}$ and the other does not, say $T_{ijk}\subset h_i$ and $T_{ijk}\subset h_j'$, then the angle of the triangle $T_{ijk}$ between $l_i$ and $l_j$ is $\hat{\theta}_{ij}=\pi - \theta_{ij}$. In the former case we say that the angle $\hat{\theta}_{ij}$ \term{agrees} with the orientations of $h_i$ and $h_j$, and in the latter \term{disagrees}. Either all angles of $T_{ijk}$ agree or exactly two disagree with the orientations of $h_i$, $h_j$, $h_k$. 

Since $0 \leq \theta_{ij} \leq \pi/2$, then $\hat{\theta}_{ij} \leq \pi/2$ if it agrees and $\hat{\theta}_{ij} \geq \pi/2$ if it disagrees with the half-plane orientations. But the sum of $\hat{\theta}_{ij} + \hat{\theta}_{jk} + \hat{\theta}_{ki} < \pi$ by hyperbolic trigonometry and thus it is not possible that two angles disagree with the orientation and therefore all must agree. Orient each line $l_i$ so that its left-hand side is the half-plane $h_i$. The property above implies that this \term{induced orientation} is consistent (either counter-clockwise or clockwise) around the triangle $T_{ijk}$.

First let us assume that $D_1$ and $D_4$ are disjoint (i.e. $d(D_1, D_4) \not\in[-1,1]$. Then $l_1$ and $l_4$ do not intersect. The triangles $T_{123}$ and $T_{234}$ share the point $p$ of intersection between $l_2$ and $l_3$ in common, and an edge of each triangle is supported by each of $l_2$ and $l_3$. This means that either $T_{123} \subset T_{234}$ or vice versa, or the two triangles form a bowtie configuration on opposite sides of $p$. In each case, using only the fact that the orientations of the hyperplanes must agree at each corner of the two triangles it is easily shown that either $h_1 \subset h_4$ or $h_4 \subset h_1$. But then $D_4 \subset D_1$ and thus $d(D_1, D_4) < 1$. 

Now assume that $D_1$ and $D_4$ do overlap (i.e. $d(D_1, D_4)\in[-1,1]$). Then $l_1$ and $l_4$ meet either at a hyperbolic point or at an ideal point. Then $l_1$, $l_2$, $l_3$, and $l_4$ are an arrangement of four lines each pair of which meet and no three of which meet at a common point (due to the assumption that no three disks are contained in the same coaxial family). This arrangement defines four hyperbolic triangles by any choice of three of the lines. By the discussion above, the half-planes induce an orientation on the lines $l_1$, $l_2$, and $l_3$ that is consistent around $T_{123}$. For contradiction assume $d(D_1, D_4)\in[0, 1]$. Then all of the pairwise overlap angles $\theta_{ij} \in [0, \pi/2]$ for $i,j=1\dots 4$, $i\neq j$. This means that the half-plane of $h_4$ must induce an orientation of $l_4$ so that each of the triangles $T_{123}, T_{124}, T_{134},$ and $T_{234}$. However, for any placement of $l_4$ no matter what orientation is chosen at least one of the triangles is not consistently oriented. Therefore it is not possible for all of the overlap angles to be in $[0, \pi/2]$. Thus, $d(D_1, D_4) < 0$.  

{\em Case 2:} This is lemma~\ref{lem:parabolic4diskconfig}.

{\em Case 3:} Since $\Pi$ is elliptic there is a Möbius transformation  taking all of the disks in $\Pi$ to the set of disks bounded by great circles on $\mathbb{S}^2$. Such a disk is parametrized by a vector in $\spacetime$ with $0$ in the $a$-coordinate. Let $D_i = (0, b_i, c_i, d_i)$ denote the normalized coordinates of each disk. The inversive distance between two disks becomes: 
\[
d(D_i, D_j) = -b_i b_j - c_i c_j - d_i d_j = -(b_i b_j + c_i c_j + d_i d_j)
\]
which is simply the negative of the Euclidean inner product between the oriented normal vectors of the planes in $\mathbb{R}^3$ supporting each disk (with the normal pointed toward the interior of the disk). Since all great circles meet, for all $i,j=1\dots 4$ $d(D_i, D_j) \in [-1, 1]$. We first claim that $d(D_i, D_j) \geq 0$. Indeed, this is equivalent to claiming that the four normal vectors in $\mathbb{E}^3$ have negative Euclidean inner product. This is possible only if the great disks cover $\mathbb{S}^2$. Furthermore, there are no five vectors in $\mathbb{R}^3$ that all have pairwise negative Euclidean inner product.  
\end{proof}

\section{Circle polyhedra}\label{sec:circlepolyhedra}

\subsection{Abstract circle polyhedra and realizations}\label{sec:abstractcpolyhedra}

We call a 3-connected planar graph $P$ an \term{abstract polyhedron}. Let $V(P)$ and $E(P)$ denote the vertex and edge sets of $P$. Such graphs have a well defined face set $F(P)$. Let $n = |V(P)|$ and $m = |E(P)|$. If each face is a triangle, then $m=3n-6$ and we refer to $P$ as an \term{abstract triangulated polyhedron}. In the remainder, we will use the term polyhedron to mean triangulated polyhedron. 

Let $w:E(P)\rightarrow \mathbb{R}$ be an assignment of weights to each edge of $P$ representing desired inversive distances. We call the pair $(P, w)$ a \term{weighted abstract polyhedron}.  If each $w(ij)\in[0, 1]$, we call $(P, w)$ \term{shallow} and \term{strictly shallow} if no 4-cycle of edges $ij$, $jk$, $kl$, $li$ in $E(P)$ has weight 0 for each edge. 

A \term{realization} of a weighted abstract polyhedron $(P, w)$ on $\mathbb{S}^2$ is a set of disks ${\bf D} = \{D_1, \dots, D_n\}$ on $\mathbb{S}^2$ such that $d(D_i, D_j) = w(ij)$ whenever $ij\in E(P)$. We call the pair $(P, \mathbf{D})$ a \term{circle polyhedron}, or \term{c-polyhedron}. As with the abstract polyhedra, $(P, w)$, we will call a circle polyhedra \term{(strictly) shallow} when it realizes a (strictly) shallow abstract polyhedron. 

\paragraph{Proper c-polyedra.} If the area of each disk of a circle polyhedron is strictly less than $2\pi$ we say it is \term{strictly proper} and if we relax this so that one disk may have area $2\pi$ we call it \term{proper}. 

\paragraph{Geodesic triangulations.}
Let $(P, \mathbf{D})$ be a proper shallow c-polyhedron. Let $G$ be a graph drawing on $\mathbb{S}^2$ given by placing a vertex $v_i$ at the center of each disk $D_i \in \mathbf{D}$ and for each edge $ij\in E(P)$ drawing the shortest path great circle arc on $\mathbb{S}^2$ from $v_i$ to $v_j$. Since $(P, \mathbf{D})$ is proper and shallow, the centers of two disks $D_i$ and $D_j$ connected by an edge $ij$ cannot be antipodal, and therefore the shortest path is well defined. Furthermore, by elementary geometry it can be shown that this path lies entirely within $D_i\cup D_j$. We call $G$ the \term{induced geodesic graph} of $(P, \mathbf{D})$. 

If no two arcs of $G$ self intersect and no vertex of $G$ lies on the interior of an arc it is not incident to in $P$, then $G$ is a triangulation of $\mathbb{S}^2$. In this case we say that $(P, \mathbf{D})$ has a \term{induced geodesic triangulation} $G$ and call $(P, \mathbf{D})$ a \term{geodesic circle polyhedron}. 

\paragraph{The Koebe-Andre'ev-Thruston theorem.}
We may now state the Koebe-Andre'ev-Thurston theorem. Our statement is similar in presentation to that in \cite{B19}.  

\begin{Theorem}[Koebe-Andre'ev-Thurston]\label{thm:kat}
	Let $(P, w)$ be a shallow abstract triangulated polyhedron, with $P$ not a tetrahedron, and let the following conditions hold.
	\begin{enumerate}
		\item If $e_1$, $e_2$, $e_3$ form a closed loop of edges with $\sum_{i=1}^3 \arccos w(e_i) \geq \pi$, then $e_1$, $e_2$, and $e_3$ bound a face of $P$. 
		\item If $e_1$, $e_2$, $e_3$, and $e_4$ form a closed loop of edges with $\sum_{i=1}^4 \arccos w(e_i) = 2\pi$ then $e_1$, $e_2$, $e_3$, and $e_4$ bound the union of two faces of $P$.
	\end{enumerate}
	Then there exists a geodesic c-polyhedron $(P, \mathbf{D})$ realizing $(P, w)$.  
\end{Theorem}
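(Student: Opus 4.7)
The plan is to mirror Connelly--Gortler's strategy but to separate the argument into two phases: first establish the theorem for \emph{strictly shallow} weighted polyhedra via an iterated flow, then obtain the general shallow case by a compactness/limit argument that allows some weights to degenerate to~$0$ (i.e., overlap angles of~$\pi/2$).

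For the strictly shallow reduction, I would begin by invoking the Connelly--Gortler tangency packing theorem to obtain a geodesic tangency c-polyhedron $(P,\mathbf{D}_0)$ realizing the constant weight~$1$. Enumerate the edges $e_1,\dots,e_m$ of $P$ and process them sequentially. At step~$k$, all edges $e_1,\dots,e_{k-1}$ already carry their target inversive distance $w(e_j)$, the edges $e_{k+1},\dots,e_m$ still carry inversive distance~$1$, and the constraint on $e_k$ is dropped to serve as the \emph{free edge}. By the infinitesimal rigidity of strictly convex c-polyhedra developed in Section~\ref{sec:infrigidity}, the space of disk configurations realizing the $m-1$ retained constraints is locally a $1$-dimensional packing manifold through the current c-polyhedron. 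Flow along this manifold in the direction that moves $d(D_i, D_j)$ with $e_k = ij$ monotonically toward the target $w(e_k)\in(0,1]$. Because each intermediate packing has the same combinatorics as a strictly shallow $(P,w')$ with $w'(e_j)\in(0,1]$, the hypotheses of Section~\ref{sec:motions} apply at every intermediate time.

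The main obstacle is showing that this flow can be carried to completion without degenerating. The potential failures are: (a) two disks not joined by an edge become tangent or overlap, destroying the induced geodesic triangulation; (b) some disk shrinks to a point or grows to cover $\mathbb{S}^2$; (c) the c-polyhedron loses convexity so that the packing manifold ceases to be a manifold. The two combinatorial hypotheses of Theorem~\ref{thm:kat} --- the $\pi$-condition on triangle loops and the $2\pi$-condition on $4$-cycles --- are precisely what rules out (a): a newly created non-edge tangency or overlap would, together with the existing edges, produce a short cycle of shallow overlaps whose angle sum violates Corollary~\ref{cor:tripleanglesum} or Lemma~\ref{lem:cplanarity} and hence, by those lemmas, forces a combinatorial configuration the hypotheses exclude. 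The non-degeneracy and convexity preservation results from Section~\ref{sec:motions} handle (b) and (c). Since the inversive distance along the free edge varies smoothly and monotonically along the packing manifold between the extremes of $1$ and the permitted degeneration, the target value $w(e_k)\in(0,1]$ is reached in finite flow time.

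Finally, to pass from strictly shallow to shallow, let $w_n$ be strictly shallow weights converging to $w$ and still satisfying the two loop hypotheses (replace each $0$ weight by $1/n$; the loop conditions are open on the strict-inequality side and survive small perturbation). The strictly shallow case yields realizations $(P,\mathbf{D}_n)$. Normalize each $\mathbf{D}_n$ by a Möbius transformation so that three designated disks lie in a fixed compact region of configuration space, and pass to a subsequential limit $\mathbf{D}$. The loop conditions for $w$, together with the four-disk analysis of Lemma~\ref{lem:cplanarity} and the parabolic-triple analysis of Lemma~\ref{lem:paraboliccases}, prevent two disks from coalescing or a disk from degenerating in the limit, so $\mathbf{D}$ is a genuine realization of $(P,w)$; the geodesic triangulation property is closed under limits once no degeneracy occurs, completing the proof.
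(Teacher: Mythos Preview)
Your two-phase strategy---iterated flow for strictly shallow weights, followed by a compactness limit for the general case---is exactly the paper's approach, and the overall architecture is correct.

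One point of confusion worth flagging: you have inverted the roles of the KAT loop conditions and the strict-shallowness hypothesis in handling your three failure modes. In the paper, failure~(a) (loss of the geodesic triangulation via a non-edge overlap) and failure~(c) (loss of convexity) are ruled out by strict shallowness \emph{alone}, via Theorems~\ref{thm:geodesicpreservation} and~\ref{thm:convexpreservation}; the KAT conditions play no role there. Conversely, the $3$-cycle and $4$-cycle hypotheses are used \emph{only} to prevent failure~(b), radius collapse, via Theorem~\ref{thm:katconditionsnocollapse} and Lemma~\ref{lem:contractionimplications}: a vanishing disk forces its non-vanishing neighbors to meet at a common point, and the parabolic analysis (Lemma~\ref{lem:paraboliccases}, Corollary~\ref{cor:no5shallowinparabolic}) then forces a short cycle whose angle sum hits the forbidden threshold. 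Your invocation of Lemma~\ref{lem:cplanarity} here is misplaced; that lemma is the engine behind convexity preservation, not the no-collapse argument.

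For the limit step your approach is slightly looser than the paper's but should work. The paper pins a single face throughout and flows to a sequence of strictly shallow targets $w_{1/2^k}$, producing one continuous pinned motion to which Theorem~\ref{thm:katconditionsnocollapse} applies directly. Your version takes independent realizations $(P,\mathbf{D}_n)$ and normalizes afterward; to make this rigorous you still need the pinned no-collapse analysis, so in practice you end up reproducing the paper's setup.
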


In section~\ref{sec:proofofkat} we give a novel proof of this theorem generalizing the proof of the tangency case from \cite{CG19}.

\subsection{Convex circle polyhedra}\label{sec:convexcpolyhedradef}
Let $(P, \mathbf{D})$ be a triangulated circle polyhedron and let $(D_i, D_j, D_k)$ be the disks corresponding to a face $ijk\in F(P)$. Let $O_+$ be the positively oriented orthodisk of $(D_i, D_j, D_k)$. We say that $O^+$ \term{separates} $(P, \mathbf{D})$ if and only if there exist disks $D_+$ and $D_-$ in $\mathbf{D}$ such that $det(D_+, O_+) > 0$ and $det(D_-, O_+) < 0$. A face \term{separates} $(P, \mathbf{D})$ precisely when its orthodisks do. $(P, \mathbf{D})$ is \term{convex} if and only if no face separates $(P, \mathbf{D})$.
Interpreted in $\mathbb{R}^{1,3}$ this is the usual separation of the rays of ${\bf D}$ by the unique hyperplane containing the rays $D_i$, $D_j$, and $D_k$. Under this interpretation a convex circle polyhedron is a convex polyhedral solid cone with apex at the origin in $\spacetime$. 



\paragraph{The configuration space of circle polyhedra.}
Given an abstract triangulated polyhedron $P$ a \term{configuration} of $P$ is a set of $n$ disks ${\bf D}$, one for each vertex in $P$. We think of $\mathbf{D}$ as a point in $\mathbb{R}^{4n}$ and call it the \term{configuration space} of $P$. Let ${\bf D}(t)$ be a continuous path in $\mathbb{R}^{4n}$ on some interval $[a, b]$ such that ${\bf D}(a)$ is strictly convex and ${\bf D}(b)$ is strictly non-convex. Both strict convexity and strict non-convexity are open conditions determined by the signs of determinants of each face in $P$ with all other vertices in $P$ (see (~\ref{eq:determinanttest})). Then there is an open ball around ${\bf D}(a)$ in $\mathbb{R}^{4n}$ such that all configurations in the ball are strictly convex and all disks are real and distinct and an open ball around ${\bf D}(b)$ in which all configurations are strictly non-convex and all disks are real and distinct. Thus, there must be some intermediate time $a < t' < b$ at which either a disk $D_i$ contracts to a point disk, or two neighboring disks coincide, or $(P, {\bf D}(t'))$ is convex, but not strictly so. In the third case, the associated polyhedron in $\mathbb{S}^3$ is no longer strictly convex, which occurs when two faces become co-planar. Thus in $(P, {\bf D}(t'))$ we have two faces $ijk$ and $kjl$ incident an edge $jk$ such that all four disks $D_i$, $D_j$, $D_k$, and $D_l$ are coplanar. From this discussion we immediately have:
\begin{Lemma}\label{lem:maintainconvexity1}
	Let $(P, {\bf D}(t))$ be a path in $\mathbb{R}^{4n}$ defined for a (possibly bi-infinite) interval $t\in I$ such that there is a time $t'\in I$ where $(P, {\bf D}(t'))$ is strictly convex. If for all $t\in I$ we have that
	\begin{enumerate}
		\item $D_i(t)$ is a real disk for all $i\in V(P)$,
		\item $D_i(t) \neq D_j(t)$ for all $ij\in E(P)$, and 
		\item if $ijk, kjl\in F(P)$ are faces sharing a common edge $jk$ then $D_i(t)$, $D_j(t)$, $D_k(t)$, and $D_l(t)$ are not co-planar,
	\end{enumerate}
	then $(P, {\bf D}(t))$ is strictly convex for all $t\in I$.
\end{Lemma}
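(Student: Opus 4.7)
The plan is to argue by contradiction using the fact, already noted in the discussion preceding the lemma, that strict convexity and strict non-convexity are both open conditions in configuration space. Let $A \subseteq I$ be the set of times at which $(P, \mathbf{D}(t))$ is strictly convex. By hypothesis, $t' \in A$, so $A$ is nonempty. Since strict convexity is defined by simultaneous strict inequalities on the finitely many face-versus-vertex determinants (equation~(\ref{eq:determinanttest})), and $\mathbf{D}(t)$ varies continuously in $\mathbb{R}^{4n}$, the set $A$ is open in $I$. If $A = I$, we are done, so suppose for contradiction that $A \neq I$ and let $t_0 \in I \setminus A$. Then there is a finite $t'' \in I$ lying on the boundary of $A$ between $t'$ and $t_0$ (the connected component of $A$ containing $t'$ has an endpoint in $I$).

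At this boundary time $t''$ the configuration is convex (as the non-strict inequalities persist in the closure), but some face-versus-vertex determinant $\det(O^+_{ijk}, D_\ell(t''))$ must vanish, where $O^+_{ijk}$ is the positively oriented orthodisk of a face $ijk \in F(P)$ and $\ell \in V(P) \setminus \{i,j,k\}$. I would then analyze what this vanishing forces. Equivalently, in $\spacetime$ the four rays $D_i(t''), D_j(t''), D_k(t''), D_\ell(t'')$ become linearly dependent, i.e.\ coplanar in the 4-dimensional sense.

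Now I run through cases to reach a contradiction. If $\ell$ is a vertex sharing an edge with all of $i$, $j$, $k$ — impossible in a triangulated polyhedron unless $\ell$ is the opposite vertex of a face sharing an edge with $ijk$ — then condition~(3) is violated directly in the case where $ijk$ and one of the neighboring faces $kj\ell$, $ik\ell$, or $ij\ell$ share an edge: the four corresponding disks are coplanar, contradicting assumption~(3). In the remaining case $\ell$ is not adjacent to one of the face's vertices, but then the linear dependence, combined with convexity at $t''$, forces two of the four rays to actually coincide as rays in $\spacetime$. If two adjacent rays among $D_i, D_j, D_k$ coincide, that contradicts~(2) applied to the edge between them; if $D_\ell$ coincides with one of $D_i, D_j, D_k$, then in particular $D_\ell = D_i$ (say) as rays through the origin, so the underlying disks either coincide or are antipodal, and in either subcase we can pair $\ell$ with a neighbor of $i$ in $P$ to derive a contradiction either with~(1) (if the coincidence forces one of the disks to be a point, which happens at the boundary of the real-disk locus) or again with~(2).

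The main obstacle I foresee is being careful in the last paragraph: turning the algebraic statement ``the four rays are linearly dependent and the polyhedron is still convex'' into the geometric dichotomy (two rays coincide, or two adjacent faces have become co-planar through their shared edge). The cleanest way to handle it, I expect, is to observe that convexity at $t''$ already implies that all vertices $D_\ell$ lie in one closed half-space of the hyperplane spanned by $D_i, D_j, D_k$; if a vertex $D_\ell$ lies on this hyperplane, then in the convex cone picture the face $ijk$ must extend to include $\ell$, which forces $\ell$ to be the opposite vertex of an edge-adjacent face (since $P$ is triangulated and 3-connected), reducing the problem to exactly condition~(3). Thus each of the three possible failure modes is excluded by one of the three hypotheses, completing the contradiction and establishing $A = I$.
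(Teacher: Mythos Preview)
Your overall strategy---using that strict convexity and strict non-convexity are open conditions, finding a boundary time $t''$, and arguing that at $t''$ two edge-adjacent faces must be coplanar---is exactly the paper's approach, which appears as the discussion paragraph immediately preceding the lemma.

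However, your case analysis after locating the vanishing determinant $\det(O^+_{ijk}, D_\ell(t'')) = 0$ has a genuine gap. The claim in your middle paragraph that ``the linear dependence, combined with convexity at $t''$, forces two of the four rays to actually coincide'' is simply false: four rays in $\spacetime$ lying in a common $3$-dimensional hyperplane through the origin is a codimension-one condition and in no way forces a coincidence. That case should be deleted entirely.

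Your final paragraph has the right idea but the wrong conclusion. It is \emph{not} true that the specific vertex $\ell$ for which the determinant first vanishes must be the opposite vertex of a face edge-adjacent to $ijk$. What is true is this: at $t''$ the configuration is still convex, so the supporting hyperplane $\Pi$ of face $ijk$ meets the convex polyhedral cone in a $2$-dimensional face containing at least the four vertices $D_i, D_j, D_k, D_\ell$. The triangulation $P$ restricts to a triangulation of this $2$-dimensional face by at least two triangles of $F(P)$, and any triangulation of a polygon with more than one triangle contains two edge-adjacent triangles. Those two faces of $P$ lie in $\Pi$ and hence are coplanar, contradicting~(3). So the contradiction with~(3) always fires, but possibly for a different pair of faces than the one you named. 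Once you make this correction, conditions~(1) and~(2) are only needed to ensure the determinants vary continuously and the faces do not degenerate; you do not need the separate ``coinciding rays'' branch at all.
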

We call a path $(P, {\bf D}(t))$ a \term{motion} of the circle polyhedron $(P, {\bf D}(t))$. Note that a motion does not necessarily preserve inversive distances. We investigate the properties of motions in section~\ref{sec:motions} after deriving some basic properties of circle polyhedra in the next section.

\paragraph{Conical cap polyhedra.} Let $(P, \mathbf{D})$ be a triangulated circle polyhedron on $\mathbb{S}^2$, none of which has area greater than or equal to $2\pi$ (equivalently, the $a$ coordinate of each disk is greater than 0). Construct a Euclidean polyhedron by placing placing each vertex $i\in V(P)$ at the conical cap $D_i^*$ and realizing each edge $ij\in E(P)$ as the line segment $D_i^* D_j^*$. The resulting triangulated Euclidean polyhedron is the \term{(Euclidean) conical cap polyhedron} of $(P, {\bf D})$. The following lemma relates the convexity of the cap polyhedron with that of $(P, {\bf D})$.

\begin{Lemma}\label{lem:convexcappolyhedron}
	Let $(P, {\bf D})$ be a strictly proper triangulated circle polyhedron on $\mathbb{S}^2$, and let $\widehat{P}$ denote its conical cap polyhedron in $\mathbb{E}^3$. Then $(P, \mathbf{D})$ is a convex circle polyhedron if and only if $\widehat{P}$ is a convex Euclidean polyhedron. 
\end{Lemma}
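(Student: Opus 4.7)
The plan is to translate the convexity condition for $(P, \mathbf{D})$ into ordinary Euclidean convexity of $\widehat{P}$ by using the identification between rays in $\spacetime$ with positive $t$-coordinate and points in $\mathbb{E}^3$: a disk $D_i = (a_i, b_i, c_i, d_i)$ with $a_i > 0$ corresponds to the Euclidean point $D_i^* = (b_i/a_i, c_i/a_i, d_i/a_i)$. Because $(P, \mathbf{D})$ is strictly proper, every $a_i > 0$, so every conical cap $D_i^*$ is a finite point in $\mathbb{E}^3$ and $\widehat{P}$ is a well-defined Euclidean polyhedron.

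The core step is to match signs of determinants. Fix a face $ijk\in F(P)$ and any other vertex $l\in V(P)$, and scale the rows of the $4\times 4$ matrix in~\eqref{eq:determinanttest} (computing $\det(O^+, D_l)$) by $1/a_l, 1/a_i, 1/a_j, 1/a_k$ respectively. Because these scalars are all positive, the sign of the determinant is preserved and the resulting rows are exactly $(1, b/a, c/a, d/a)$ for the four disks in question, giving the standard signed-volume determinant of the Euclidean tetrahedron with vertices $D_l^*, D_i^*, D_j^*, D_k^*$. Hence $\det(O^+, D_l)$ has the same sign as this tetrahedral signed volume. A companion geometric observation is that a vector $(a, b, c, d)$ with $a > 0$ lies on the positive side of a linear hyperplane $\{\alpha t + \beta x + \gamma y + \delta z = 0\}$ through the origin of $\spacetime$ iff $(b/a, c/a, d/a)$ lies on the corresponding side of the affine plane obtained by intersecting that hyperplane with $\{t = 1\}$; this is immediate because $\alpha a + \beta b + \gamma c + \delta d$ and $\alpha + \beta(b/a) + \gamma(c/a) + \delta(d/a)$ differ by the positive factor $a$.

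Stitching these together, the face $ijk$ separates $(P, \mathbf{D})$ in the sense of Section~\ref{sec:convexcpolyhedradef} iff there exist $l_+, l_-\in V(P)$ with $\det(O^+, D_{l_+}) > 0$ and $\det(O^+, D_{l_-}) < 0$, and by the sign equivalence above this happens iff the Euclidean plane through $D_i^*, D_j^*, D_k^*$ has conical caps strictly on both sides, which is exactly the failure of Euclidean convexity of $\widehat{P}$ at that face. Conjoining over all faces gives the biconditional. I do not anticipate any significant obstacle; the only thing to be attentive to is that each $a_i$ is strictly positive so that the row rescaling is legitimate and every conical cap lies in the finite part of $\widehat{\mathbb{E}}^3$, both of which are guaranteed by the strict properness hypothesis.
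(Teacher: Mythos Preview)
Your argument is correct and is essentially the same as the paper's: both proofs rescale the rows of the determinant~\eqref{eq:determinanttest} by the positive reciprocals $1/a_l, 1/a_i, 1/a_j, 1/a_k$ (legitimate since strict properness gives $a>0$ for every disk) to obtain the standard Euclidean signed-volume determinant for the conical cap points, and then read off that the separation condition for $(P,\mathbf{D})$ matches the Euclidean separation condition for $\widehat{P}$ face by face.
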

\begin{proof}
Let $D_i = (a_i, b_i, c_i, d_i)$, $D_j = (a_j, b_j, c_j, d_j)$ and $D_k = (a_k, b_k, c_k, d_k)$ be the disks of ${\bf D}$ corresponding to a face $ijk\in F(P)$. Since no disk has $2\pi$ area or more, $a_i > 0$, $a_j > 0$, and $a_k > 0$. Thus we may scale the coefficients of each disk to obtain new coordinates: $D_i = (1, b_i/a_i, c_i/a_i, d_i/a_i)$, $D_j = (1, b_j/a_j, c_j/a_j, d_j/a_j)$, and $D_k = (1, b_k/a_k, c_k/a_k, d_k/a_k)$ without flipping the orientation of any disks. Let $O_+$ be the positively oriented orthodisk of $ijk$ and let $D = (1, b/a, c/a, d/a)$ be another disk in $(P, \mathbf{D})$ also scaled so that the first coordinate is 1. From equation~\ref{eq:determinanttest} we have that 
\begin{equation}\label{eq:capdet}
det(O_+, D) = \frac{1}{a a_i a_j a_k}
	\begin{vmatrix}
		1 & b/a & c/a & d/a \\ 
		1 & b_i/a_i & c_i/a_i & d_i/a_i \\
		1 & b_j/a_j & c_j/a_j & d_j/a_j \\
		1 & b_k/a_k & c_k/a_k & d_k/a_k 
	\end{vmatrix}
\end{equation}
The determinant on the right hand side of (\ref{eq:capdet}) is the standard determinant whose sign is used to determine whether the point $(b/a, c/a, d/a)$ lies on the positively oriented side (determinant is positive), negatively oriented side (determinant is negative), or inside (determinant is zero) the oriented Euclidean plane meeting $D_i^*$, $D_j^*$, and $D_k^*$. Since $a, a_i, a_j, a_k > 0$, the sign of this determinant is the same as $det(O_+, D)$. Thus if $D'$ is another disk in $\mathbf{D}$, $D^*$ and $(D')^*$ lie on the same side of the plane meeting $D_i^*$, $D_j^*$, and $D_k^*$ if and only if $det(O_+, D)$ and $det(O_+, D')$ have the same sign. Therefore no plane supporting any face of the cap polyhedron separates any other vertices of the polyhedron, which is thus convex.
\end{proof}

\paragraph{Extending to large disks.} Lemma~\ref{lem:convexcappolyhedron} shows that if all disks of a strictly convex circle polyhedron $(P, {\bf D})$ have area less than $2\pi$, then the resulting cap polyhedron $\widehat{P}$ is convex. We may extend the notion of conical caps to disks that have area greater than $2\pi$. Indeed for such a disk $D$ we will simply take its conical cap to be the conical cap of the disk opposite it along its boundary $\partial D$. In other words $D^* = (-D)^*$. 
Now, consider the determinant used in (\ref{eq:capdet}) and a triangle $ijk\in F(P)$. Let $O_+$ denote the orthodisk of $(D_i, D_j, D_k)$. Let $\Pi$ be the hyperplane passing through the origin of $\spacetime$ containing the rays $D_i^*$, $D_j^*$, $D_k^*$. This corresponds to a plane in $\mathbb{E}^3$ passing through the conical cap points that are the intersections of the rays $D_i^*$, $D_j^*$, $D_k^*$ with $\mathbb{E}^3$. Now let $D$ and $D'$ be two other disks in ${\bf D}$. By convexity, the rays $D^*$ and $(D')^*$ lie on the same side of $\Pi$. If both have area less than $2\pi$, then both have a positive $a$-coordinate, and thus the corresponding conical cap points in $\mathbb{E}^3$ lie on the same side of the Euclidean plane corresponding to $\Pi$. However, if one of the disks, say $D$ has area greater than $2\pi$, then its $a$-coordinate is negative. Thus by equation (\ref{eq:capdet}) the determinant on the right hand side of the equation has a flipped sign, which means that in $\mathbb{E}^3$, the conical cap points corresponding to $D^*$ and $(D')^*$ lie on opposite sides of $\Pi$. Furthermore, $D$ and $-D$ lie on opposite sides of $\Pi$ and $-D$ lies on the same side of $\Pi$ as the conical cap in $\mathbb{E}^3$ (since it is obtained by a positive scaling factor). Therefore $D^*$ and its corresponding conical cap point lie on opposite sides of $\Pi$. From this discussion we see that the cap polyhedron for a strictly convex circle polyhedron which has some disks of area greater than $2\pi$ will not be convex. This is precisely because the conical caps for disks with area greater than $2\pi$ lie on the ``wrong side'' of the plane supporting any face of the polyhedron. However, this wrong side property turns out to be well behaved. We now use it to recover a convex Euclidean fan for every vertex of the polyhedron that allows us to use 3D Euclidean geometry to investigate the geometry of the rays in $\spacetime$. This construction will be crucial to our proof of infinitesimal rigidity of convex circle polyhedra.  

\paragraph{The convex Euclidean triangle fan for a vertex.} 
Consider some vertex $0$ and its neighbors $1\dots k$ given in counter clockwise order. Let $v_i$ denote the conical cap point of $D_i$. The triangles $v_0 v_1 v_2, v_0 v_2 v_3, \dots, v_0 v_k v_1$ form a triangle fan in the cap polyhedron around $v_0$. When all disks have area less than $2\pi$, this fan is convex and lies on a convex polyhedral cone with apex $v_0$. However, if $D_1\dots D_k$ are a mix of disks with area less than $2\pi$ and area greater than $2\pi$, then some of the conical caps are on the wrong side of the planes supporting the triangles and those planes separate the points $v_1\dots v_k$. However, let $D_i$ be a disk with area greater than $2\pi$. Let $\Pi$ be any hyperplane passing through $v_0$ and the origin of $\spacetime$ that does not contain $v_i$. Reflecting $v_i$ through $v_0$ to obtain $v_i' = v_0 - (v_i - v_0)$ gives a point on the opposite side of $\Pi$. Now consider some triangle of the fan $v_0 v_i v_{i+1}$ and two other vertices $v_j$ and $v_k$ of the fan. Let $\Pi_i$ denote the hyperplane supporting the rays $D_0^*$, $D_i^*$, and $D_{i+1}^*$. If $v_j$ and $v_k$ correspond to disks $D_j$ and $D_k$ that have area strictly less than $2\pi$, then $v_j$ and $v_k$ lie on the same side of $\Pi_i$ as $D_j^*$ and $D_k^*$. However, if one has area greater than $2\pi$, say $D_i$, then $v_i$ lies on the wrong side. In this case replace $v_i$ with $v_i'$ its reflection through $v_0$. Let $v_1', \dots, v_k'$ denote the resulting points (which are reflected whenever a point is ``wrong'' and are left alone otherwise). We claim that the resulting triangle fan $v_0 v_1' v_2', v_0 v_2' v_3', \dots v_0 v_k' v_1'$ forms a triangle fan on a convex polyhedral cone. Indeed, by construction the plane $\Pi_i$ in $\mathbb{E}^3$ supporting a face $v_0 v_i' v_{i+1}'$ not only contains $v_0$, $v_i'$ and $v_{i+1}'$, but also contains $v_i$ and $v_{i+1}$. Furthermore, the corresponding hyperplane containing $\Pi_i$ and the origin of $\spacetime$ contains $D_0^*$, $D_i^*$, and $D_{i+1}^*$. By convexity the normal rays for all other disks in ${\bf D}$ lie on the same side of $\Pi_i$ since $0i(i+1)\in F(P)$. But each $v_i'$ lies on the same side of $\Pi_i$ as its corresponding disk $D_i$. Thus no two vertices of the fan are separated by $\Pi_i$. Therefore the fan is a convex triangle fan on a convex polyhedral cone. We call this convex triangle fan the \term{convex Euclidean triangle fan for vertex} $0$.

\subsection{Properties circle polyhedra}

We now derive some properties of circle polyhedra we use later. 

\begin{Lemma}\label{lem:noantipodaledges}
	Let $(P, {\bf D})$ be a strictly shallow circle polyhedron and $ij\in E(P)$. Then the centers of $D_i$ and $D_j$ are not antipodal.
\end{Lemma}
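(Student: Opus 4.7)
My plan is to argue by contradiction. Suppose the centers of $D_i$ and $D_j$ are antipodal on $\mathbb{S}^2$. Since a rotation of $\mathbb{S}^2$ is an isometry that preserves both inversive distances and the antipodality of centers, I may assume after rotation that $D_i$ is the spherical cap of radius $r_i\in(0,\pi)$ centered at the north pole and $D_j$ is the cap of radius $r_j\in(0,\pi)$ centered at the south pole. Writing out the normalized de Sitter coordinates, $D_i = (\cot r_i, 0, 0, \csc r_i)$ and $D_j = (\cot r_j, 0, 0, -\csc r_j)$, a direct computation of the Lorentz inner product yields
\begin{equation*}
d(D_i, D_j) = \langle D_i, D_j \rangle_{1,3} = \frac{1 + \cos r_i \cos r_j}{\sin r_i \sin r_j}.
\end{equation*}

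The next step is the key inequality: this quantity is at least $1$ for all $r_i, r_j \in (0, \pi)$, with equality exactly when $r_i + r_j = \pi$. This follows from the algebraic identity $(1+uv)^2 - (1-u^2)(1-v^2) = (u+v)^2 \ge 0$ applied with $u = \cos r_i,\ v = \cos r_j$ (since $1 + uv > 0$ on $(-1,1)^2$ one may square safely). Because $(P, \mathbf{D})$ is strictly shallow, $w(ij) = d(D_i, D_j) \in [0, 1]$. Combining this with the inequality forces $d(D_i, D_j) = 1$ and $r_j = \pi - r_i$. Substituting back into the coordinates gives $D_j = -D_i$: the two disks share the same boundary circle but carry opposite orientation.

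Finally, I would leverage $D_j = -D_i$ together with the absence of zero $4$-cycles in the strictly shallow polyhedron. Since $P$ is triangulated, the edge $ij$ lies on exactly two faces $ijk$ and $ijl$ with $k\neq l$ and $i,j,k,l$ pairwise distinct. From $\langle D_j, D_m\rangle_{1,3} = -\langle D_i, D_m\rangle_{1,3}$ we obtain $d(D_j, D_m) = -d(D_i, D_m)$ for every disk $D_m$. Taking $m = k$ and $m = l$, the requirement that each of $d(D_i, D_k),\, d(D_j, D_k),\, d(D_i, D_l),\, d(D_j, D_l)$ lie in $[0, 1]$ forces all four inversive distances to equal $0$. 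Therefore the four edges $ik,\, kj,\, jl,\, li$ form a $4$-cycle in $E(P)$ of weight-zero edges, directly contradicting strict shallowness. The main subtlety is recognizing that the extremal case of the inequality does not merely yield a tangency at an isolated point but rather the identification $D_j = -D_i$; it is precisely this sign-flip that propagates the edge constraint across the two adjacent faces and produces the forbidden zero $4$-cycle.
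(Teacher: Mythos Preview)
Your proof is correct and follows essentially the same strategy as the paper: reduce the antipodal case to $D_j = -D_i$, then use the sign-flip $d(D_j,D_m) = -d(D_i,D_m)$ on the two faces adjacent to $ij$ to force a weight-zero $4$-cycle, contradicting strict shallowness. The only difference is cosmetic: where the paper invokes the geometric fact that antipodal disks have either coincident or disjoint boundary circles (and then applies a M\"obius transformation sending the common boundary to the equator), you instead carry out an explicit coordinate computation and the inequality $(1+uv)^2 \ge (1-u^2)(1-v^2)$ to reach the same conclusion that $D_j = -D_i$.
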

\begin{proof}
	Antipodal disks either have the same boundary circle or disjoint boundary circles. Since $(P, {\bf D})$ is shallow the boundary circles of neighboring disks must either be tangent or intersect. Thus the only possible antipodal case is when two neighboring disks share the same boundary circle.
	Assume for contradiction that there is an edge $ij\in E(P)$ such that $\partial D_i = \partial D_j$. If $D_i = D_j$ then $d(D_i, D_j) = -1$ contradicting that $(P, {\bf D})$ is strictly shallow. Suppose then that $D_i = -D_j$. Let $ijk\in F(P)$ be a face containing the edge $ij$. We may, by a Möbius transformation of $\mathbb{S}^2$, take $D_i$ and $D_j$ to the two hemispheres whose boundary is the equator ($D_i = (0, 0, 0, 1)$ and $D_j = (0, 0, 0, -1)$). Consider the disk $D_k = (a_k, b_k, c_k, d_k)$. The inversive distances from $D_k$ to $D_i$ and $D_j$ are given by $d(D_i, D_k) = -d_k$ and $d(D_j, D_k) = d_k$. Thus if $d_k\neq 0$ one of these represents a non-shallow overlap, a contradiction. Thus $d_k = 0$ and $D_k$ meets both disks $D_i$ and $D_j$ orthogonally. Now consider the other triangle $jil\in F(P)$ incident the edge $ij$. By the same argument $D_l$ must overlap both $D_i$ and $D_j$ orthogonally. But then there is a 4-cycle of edges $kj$, $jl$, $li$, $ik$ that all have inversive distance 0, contradicting that $(P, {\bf D})$ is strictly shallow.
\end{proof}

Because no two neighboring disks are antipodal, the shortest path between their centers is well defined.

\begin{Corollary}\label{cor:welldefinedcenter}
	Let $(P, {\bf D})$ be a strictly shallow circle polyhedron and $ij \in E(P)$. There is a unique shortest geodesic path on $\mathbb{S}^2$ between the centers of $D_i$ and $D_j$. 
\end{Corollary}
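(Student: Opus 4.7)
The plan is to deduce this immediately from Lemma~\ref{lem:noantipodaledges} together with the basic fact about geodesics on $\mathbb{S}^2$. On the unit sphere, any two points are joined by a great-circle arc of length equal to the spherical distance between them, and this shortest arc is unique unless the two points are antipodal, in which case there is an entire $S^1$'s worth of minimizing great semicircles joining them. So the only thing that can fail is antipodality of the two centers.

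First I would recall that the center of a real disk $D=(a,b,c,d)$ on $\mathbb{S}^2$ is recovered as the unit-length representative of the spacelike ray $D$ intersected with $\mathbb{S}^2$ along the axis of its conical cap, and that $D$ and $-D$ have antipodal centers (and the same boundary circle). Consequently, the centers of two disks $D_i$ and $D_j$ are antipodal precisely when $\partial D_i = \partial D_j$, i.e.\ when $D_i = \pm D_j$.

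Next I would invoke Lemma~\ref{lem:noantipodaledges}: for $ij \in E(P)$ with $(P,{\bf D})$ strictly shallow, we cannot have $\partial D_i = \partial D_j$. Hence the centers of $D_i$ and $D_j$ are not antipodal, and the standard uniqueness of the shortest geodesic on $\mathbb{S}^2$ between non-antipodal points applies, giving the unique shortest spherical arc connecting them.

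There is essentially no obstacle here; the corollary is a one-line consequence of the preceding lemma combined with the structure of geodesics on $\mathbb{S}^2$. The only minor care needed is to make explicit the correspondence between $\partial D_i = \partial D_j$ and antipodality of the centers, which is immediate from the parametrization of disks by spacelike rays in $\spacetime$ discussed in the preliminaries.
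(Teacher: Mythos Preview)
Your approach is exactly the paper's: the corollary is stated immediately after Lemma~\ref{lem:noantipodaledges} with the one-line justification that since no two neighboring disks have antipodal centers, the shortest geodesic between their centers is well defined. So in spirit you are doing the same thing.

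However, your middle paragraph contains an incorrect claim that you should remove. You assert that the centers of $D_i$ and $D_j$ are antipodal \emph{precisely when} $\partial D_i=\partial D_j$. One direction is fine ($D$ and $-D$ have the same boundary and antipodal centers), but the converse fails: two disks can have antipodal centers with distinct, disjoint boundary circles (e.g.\ small caps about the north and south poles). You also paraphrase Lemma~\ref{lem:noantipodaledges} as ruling out $\partial D_i=\partial D_j$, but its actual conclusion is stronger: it directly asserts that the centers of $D_i$ and $D_j$ are not antipodal (the proof there already disposes of the disjoint-boundary antipodal case using shallowness). So you should simply cite the lemma as stated and drop the ``precisely when'' characterization; the corollary then follows in one line from the standard fact about geodesics on $\mathbb{S}^2$.
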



\begin{Lemma}\label{lem:geodesictriangles}
Let $(D_1, D_2, D_3)$ be a triple of disks with shallow overlaps. Let $s$ be the geodesic shortest path connecting the centers of $D_1$ and $D_2$. Then either 
\begin{enumerate}
	\item $D_3$ is disjoint from $s$,
	\item the boundary of $D_3$ is the great circle containing $s$, or
	\item $D_1$ and $D_2$ are tangent at a point $p$ on $s$ and $D_3$ is orthogonal to both $D_1$ and $D_2$ and tangent to the great circle containing $s$ at $p$. 
\end{enumerate}
\end{Lemma}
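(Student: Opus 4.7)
The plan is to suppose $D_3 \cap s \neq \emptyset$ and $\partial D_3 \neq C$, so that cases 1 and 2 are excluded, and show the shallow overlap constraints force exactly the tangency configuration of case 3. The guiding idea is to reduce, via a shrinking argument, to a parabolic coaxial pair $(D_3', D_s)$ with the hemisphere $D_s$ bounded by $C$, and then to extract case 3 from the resulting parabolic geometry. Throughout I exploit that the spherical centers of $D_1$ and $D_2$ lie on $C = \partial D_s$ and hence $d(D_i, D_s) = 0$ for $i = 1, 2$.

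First I would handle the easy eliminations. If $\partial D_3 \cap C = \emptyset$, then either $D_3$ misses $C$ entirely (hence $D_3 \cap s = \emptyset$, case 1) or $D_3$ swallows all of $C$; in the latter case the spherical analog of the planar center-containment lemma from the preliminaries (invoked after stereographic projection) forces $d(D_1, D_3) < 0$, contradicting shallowness. The same center-containment forbids the interior of $D_3$ from containing $v_1$ or $v_2$. If $\partial D_3$ meets $s$ transversally, I would shrink $D_3$ about its spherical center until it is just tangent to $C$ at some interior point $p$ of $s$, obtaining $D_3'$. Because shallowness of $d(D_3, D_i)$ places the center of $D_3$ outside $D_i$, a direct computation shows $r \mapsto d(D(r), D_i)$ is strictly monotone in this regime, so $d(D_3', D_i) \geq d(D_3, D_i) \geq 0$ and $(D_3', D_s)$ is a parabolic pair tangent at $p$.

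The crux is a Möbius normalization of this parabolic picture. Applying a Möbius transformation sending $p \mapsto \infty$, $C \mapsto \mathbb{R}$, $D_s \mapsto \{y \geq 0\}$, and $D_3' \mapsto \{y \leq -h\}$ for some $h > 0$, the constraint $d(D_i, D_s) = 0$ forces the image of $\partial D_i$ to be either a vertical line or a circle centered on $\mathbb{R}$. The planar inversive-distance formulas from the preliminaries show that $p \in D_i^{\mathrm{int}}$ (which makes the image of $D_i$ an exterior disk bounded by a circle centered on $\mathbb{R}$) yields $d(D_i, D_3') < 0$, contradicting shallowness; so $p \notin D_i^{\mathrm{int}}$ for $i = 1, 2$. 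Since $d(D_1, D_2) \leq 1$ forces $s \subset D_1 \cup D_2$, the point $p$ must lie on $\partial D_i$ for some $i$. For such an $i$, the image of $\partial D_i$ is a vertical line (it passes through $\infty$ and is orthogonal to $\mathbb{R}$), so the images of $D_i$ and $D_3'$ are perpendicular half-planes and $d(D_i, D_3') = 0$. Strict monotonicity of shrinking then forces $D_3 = D_3'$, so the original $D_3$ is tangent to $C$ at $p$ and orthogonal to $D_i$. Repeating the analysis for the other index---or noting that two circles through $p$ perpendicular to $C$ share the same tangent direction at $p$---gives $p \in \partial D_1 \cap \partial D_2$ with $D_1, D_2$ tangent at $p$ from opposite sides along $s$, exactly case 3.

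The main obstacle I anticipate is the spherical center-containment step: for a disk of area greater than $2\pi$ the naive inequality can fail, so strictly speaking this lemma should be invoked in a context of proper disks (area at most $2\pi$), as seems implicit throughout the paper's use of it. Beyond that, the work amounts to careful bookkeeping with sign conventions in the planar inversive-distance formulas for exterior disks and half-planes, and repeated use of the strict monotonicity of $r \mapsto d(D(r), D')$ in the shallow regime.
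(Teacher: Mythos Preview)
Your proposal is correct and follows essentially the same architecture as the paper's proof: exclude the endpoints of $s$ via the center-containment observation, reduce to the tangent-at-$p$ configuration by shrinking $D_3$ concentrically and invoking the strict monotonicity of $r \mapsto d(D(r), D_i)$, and then argue that $p$ must be the point of tangency between $D_1$ and $D_2$ with $D_3$ orthogonal to both. The one genuine difference is in how the tangent case is dispatched. The paper stays on the sphere and argues directly with radius arcs: if $p$ were interior to $D_1$, then the radius arc of $D_1$ through $p$ would witness (via an elementary spherical observation, their Remark~1) that the overlap of $D_1$ with $D_3$ exceeds $\pi/2$. You instead M\"obius-normalize by sending $p\mapsto\infty$ and compute with half-planes, mirroring the technique the paper uses elsewhere for parabolic triples. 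Both routes land on the same inequality $d(D_i,D_3')=0$ and the same contradiction; your version is slightly heavier in machinery but perhaps more systematic, while the paper's is a one-line picture argument once the three remarks are in hand. Your caveat about area-$>2\pi$ disks is apt and applies equally to the paper's Remark~2.
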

\begin{proof}
Assume that $D_3$ is not disjoint from $s$ and $\partial D_3$ is not the great circle containing $s$. We prove that $D_3$ is tangent to $s$ at a point $p$ that is also the point of tangency between $D_1$ and $D_2$. First, let us make several elementary remarks about disks and inversive distances. 

{\em Remark 1:} Let $D$ and $D'$ be disks such that $d(D, D') \in [-1, 1]$ and let $p$ be one of the intersection points (or sole point of tangency) between the boundary circles $\partial D$ and $\partial D'$. The spherical radius arc from the center of $D$ to $p$ contains a point of the interior of $D'$ if and only if the overlap between $D$ and $D'$ is greater than $\pi/2$, or equivalently if $d(D, D') < 0$. 

{\em Remark 2:} From remark 1 it follows immediately that if $d(D, D') \geq 0$ the center of $D$ is not contained within $D'$ and vice versa. 

{\em Remark 3:} If $D$ and $D'$ are two disks that do not contain each other's centers and $D''\subset D'$ is a disk with the same center as $D'$ but a strictly smaller radius, then $d(D, D') < d(D, D'')$. 

By remarks 1 and 2, $D_3$ cannot contain the endpoints of $s$ on its interior. Thus it is either tangent to $s$ at a point $p$ on its interior, or contains an interior segment $s'$ of $s$. 

In the first case, we show that $D_1$ and $D_2$ are tangent to each other at $p$. Indeed, assume not. Then $p$ is on the interior of either $D_1$ or $D_2$, say $D_1$. Let $r$ be the radius arc of $D_1$ along the segment $s$. By construction, $D_3$ is tangent to $r$ at $p$, which is interior to $D_1$. But then the radius arc between the center of $D_1$ and the intersection points of $\partial D_1$ with $\partial D_3$ is readily shown to intersect the interior of $D_3$. Then by remark 1, $D_1$ and $D_3$ do not have a shallow overlap, a contradiction. Thus $D_1$ and $D_2$ are tangent at $p$. The great circle containing $s$ meets both $D_1$ and $D_2$ orthogonally and thus, since $D_3$ is tangent to this great circle, it must be in the orthogonal coaxial family $(D_1\vee D_2)^\perp$.

Now assume that $D_3$ contains a proper sub-segment $s'$ of $s$. In this case, by remark 3, we can shrink $D_3$ while keeping its radius fixed until it is tangent to $s$ at a point $p$ to obtain a disk $D_3'$. By remark 3 $d(D_1, D_3') > d(D_1, D_3)$. But then by the paragraph above, $D_1$ and $D_2$ must be tangent at $p$ and $D_3'$ must meet them both orthogonally, so $d(D_1, D_3') = 0$. Therefore $d(D_1, D_3) < 0$, contradicting that the overlap is shallow. 
\end{proof}

Given a geodesic circle polyhedron $(P, \mathbf{D})$, let $\Delta_{ijk}$ denote the geodesic triangle corresponding to each face $ijk\in F(P)$. Let $F(i)$ denote the faces incident $i$. The \term{link} of $i$ is the union of the geodesic triangles incident $i$, $\textrm{lnk}(i) = \cup_{ijk\in F(i)} \Delta_{ijk}$. The link of vertex $i$ always contains its disk $D_i$ in a shallow geodesic circle polyhedron. 
\begin{Lemma}\label{lem:linkcontainment}
	Let $(P, {\bf D})$ be a strictly shallow geodesic circle polyhedron. Then for each vertex $i\in V(P)$ $D_i\cap\textrm{lnk}(i) = D_i$.  
\end{Lemma}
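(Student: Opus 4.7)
The plan is to argue by contradiction, leveraging that the center $c_i$ of $D_i$ is always a vertex of every geodesic triangle incident to $i$, so $c_i \in \textrm{lnk}(i)$. If $D_i \not\subseteq \textrm{lnk}(i)$, pick a point $p \in D_i \setminus \textrm{lnk}(i)$ and a continuous path $\gamma \colon [0,1] \to D_i$ from $c_i$ to $p$. Setting $t^* = \inf\{t \in [0,1] : \gamma(t) \notin \textrm{lnk}(i)\}$ and $q = \gamma(t^*)$, we have $q \in D_i \cap \partial \textrm{lnk}(i)$. The boundary $\partial \textrm{lnk}(i)$ is a union of ``outer'' geodesic arcs $s_{jk}$ joining centers $c_j, c_k$ of neighbors of $i$ with $jk \in E(P)$, meeting at the corner vertices $c_j$.

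My next step is to reduce to the case where $q$ is an interior point of some outer arc $s_{jk}$. At a corner $c_j$, the shallow overlap $d(D_i, D_j) \in [0,1]$ together with Remarks~1--2 from the proof of Lemma~\ref{lem:geodesictriangles} forces $c_j \notin \mathrm{int}(D_i)$. Using this and a small transverse perturbation of the endpoint $p$ inside $D_i \setminus \textrm{lnk}(i)$ (possible because this set is open in $D_i$, away from the finitely many corners), one can arrange $q$ to lie in the relative interior of some outer arc $s_{jk}$.

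The heart of the argument is then Lemma~\ref{lem:geodesictriangles} applied to the triple $(D_j, D_k, D_i)$, whose pairwise inversive distances are all in $[0,1]$ because $ij, jk, ik \in E(P)$. The lemma yields three alternatives for the relation of $D_i$ to the geodesic $s_{jk}$: (a) $D_i$ is disjoint from $s_{jk}$, which directly contradicts $q \in D_i \cap s_{jk}$; (b) $\partial D_i$ is the entire great circle containing $s_{jk}$, making $D_i$ a closed hemisphere bounded by that great circle; or (c) $D_j$ and $D_k$ are tangent at a point of $s_{jk}$ and $D_i$ is tangent to the same great circle at that point. In both (b) and (c) the disk $D_i$ lies on one closed side of the great circle containing $s_{jk}$, and the side is determined by $c_i \in \mathrm{int}(D_i)$. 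Since $c_i$ lies on the interior (link) side of $s_{jk}$, so does every point of $D_i$. But by definition of $q = \gamma(t^*)$, the path $\gamma$ immediately crosses $s_{jk}$ into the complement of $\textrm{lnk}(i)$, which is locally on the opposite side of the great circle and therefore disjoint from $D_i$. This contradicts $\gamma \subset D_i$.

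I expect the main obstacle to be the corner case: handling the possibility that $q = c_j$ when $d(D_i, D_j) = 0$, where Remarks~1--2 only give $c_j \notin \mathrm{int}(D_i)$ rather than $c_j \notin D_i$. Making the perturbation argument watertight here will likely require analyzing the angular wedge of $\textrm{lnk}(i)$ at $c_j$, formed by the two outer arcs $s_{jk_1}$ and $s_{jk_2}$ and the two triangles of $\textrm{lnk}(i)$ incident to $c_j$, and showing via Lemma~\ref{lem:geodesictriangles} applied to each of the adjacent triples $(D_j, D_{k_1}, D_i)$ and $(D_j, D_{k_2}, D_i)$ that $D_i$ cannot protrude out of this wedge. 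The strict shallow hypothesis enters precisely to rule out the simultaneous degenerate alignment of multiple arcs with $\partial D_i$ at the same corner.
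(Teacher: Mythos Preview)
Your approach is correct and rests on the same key ingredient as the paper, namely Lemma~\ref{lem:geodesictriangles} applied to each outer arc $s_{jk}$ of the link boundary. The difference is packaging: the paper argues directly rather than by contradiction-via-path. It simply observes that, by Lemma~\ref{lem:geodesictriangles}, no outer arc $s_{jk}$ crosses into the interior of $D_i$ (it may be disjoint, tangent, or lie entirely on $\partial D_i$), while $D_i$ does meet the interior of each incident arc $c_ic_j$. Since $(P,\mathbf{D})$ is geodesic, the boundary of $\mathrm{lnk}(i)$ is a simple spherical polygon; as this closed curve does not cross into $\mathrm{int}(D_i)$ and $c_i$ lies inside it, $D_i\subset\mathrm{lnk}(i)$ follows. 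This direct Jordan-curve style argument sidesteps the corner analysis entirely: one never needs to locate a first crossing point or worry about whether it lands at a vertex $c_j$, so the perturbation step you flag as the main obstacle simply does not arise. Your version is not wrong, but it adds technical overhead that the paper's formulation avoids.
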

\begin{proof}
	Consider one of the geodesic triangles $\Delta_{ijk}$ making up the link of $i$. Let $s$ denote the geodesic arc on the boundary of $\Delta_{ijk}$ corresponding to $jk\in E(P)$. By lemma~\ref{lem:geodesictriangles} $D_i$ does not cross $s$ or contain it on its interior (it may contain it entirely, if $D_i$ is a great disk, be tangent to it, or be disjoint from it). On the other hand, $D_i$ does contain portions of the arcs of $\Delta_{ijk}$ corresponding to the other two edges $ik$ and $jk$ on its interior. Thus the boundary of $\textrm{lnk}(i)$ may be tangent to $D_i$ but does not cross into its interior. Furthermore, since $(P, \mathbf{D})$ is geodesic, the boundary of $\textrm{lnk}(i)$ is a simple spherical polygon. Thus $D_i$ is contained within $\textrm{lnk}(i)$. 
\end{proof}

Given that the link of any vertex contains the disk for a proper shallow circle polyhedron, we can now put bounds on the pairwise inversive distances between any two disks that are not neighbors in $P$.

\begin{Lemma}\label{lem:localtoglobal}
	Let $(P, \mathbf{D})$ be a shallow geodesic circle polyhedron. Then for all non-edge pairs $i, j \in V(P)$ $d(D_i, D_j) \geq 1$ with equality if and only if there exists vertices $l, k \in V(P)$ such that $ilk\in F(P)$, $klj \in F(P)$, and $d(D_i, D_l) = d(D_l, D_j) = d(D_j, D_k) = d(D_k, D_i) = 0$ and $d(D_k, D_l) = 1$ (and therefore $(P, {\bf D})$ is not strictly shallow).  
\end{Lemma}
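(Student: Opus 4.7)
The plan is to use lemma~\ref{lem:linkcontainment} to trap $D_i$ and $D_j$ inside disjoint regions of the geodesic triangulation, and then use lemma~\ref{lem:geodesictriangles} to analyze the only way those regions can allow $D_i$ and $D_j$ to touch.

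First I would observe that, because $(P, {\bf D})$ is geodesic, the links $\textrm{lnk}(i)$ and $\textrm{lnk}(j)$ are spherical polygons whose union with the other links tiles $\mathbb{S}^2$ via the induced geodesic triangulation. Since $ij\notin E(P)$, the interiors of $\textrm{lnk}(i)$ and $\textrm{lnk}(j)$ are disjoint; their intersection $\textrm{lnk}(i)\cap\textrm{lnk}(j)$ is a (possibly empty) union of shared boundary vertices and boundary edges of the two links. By lemma~\ref{lem:linkcontainment}, $D_i\subseteq \textrm{lnk}(i)$ and $D_j\subseteq\textrm{lnk}(j)$, so $D_i\cap D_j$ is contained in this common boundary, which has measure zero. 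In particular, neither disk contains the interior of the other, ruling out $d(D_i,D_j)<-1$ and $d(D_i,D_j)=-1$, so the two disks are either disjoint ($d>1$) or externally tangent at a boundary point ($d=1$). This establishes $d(D_i,D_j)\geq 1$.

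For the equality case, suppose $d(D_i, D_j) = 1$, so that $D_i$ and $D_j$ are tangent at some point $p\in\textrm{lnk}(i)\cap\textrm{lnk}(j)$. Then $p$ lies simultaneously on the boundary of $\textrm{lnk}(i)$ and on the boundary of $\textrm{lnk}(j)$, and on $\partial D_i \cap \partial D_j$. Since $D_i$ is contained in its link and meets its boundary at $p$, the point $p$ lies on some boundary edge $s$ of $\textrm{lnk}(i)$, associated with a face $ilk\in F(P)$ and the arc from $v_l$ to $v_k$. Applying lemma~\ref{lem:geodesictriangles} to the triple $(D_l, D_k, D_i)$ and the arc $s$, the only way $D_i$ can touch $s$ without $\partial D_i$ being the great circle through $s$ (which is impossible with shallow overlaps, since it would force $\partial D_i$ to pass through the centers of $D_l$ and $D_k$, contradicting $d(D_i, D_l), d(D_i, D_k) \geq 0$) is the tangency case: $d(D_l, D_k)=1$, $d(D_i, D_l)=d(D_i, D_k)=0$, and $D_i$ is tangent to the great circle through $s$ at the tangency point of $D_l$ and $D_k$.

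Repeating the same argument for $D_j$, the boundary edge of $\textrm{lnk}(j)$ containing $p$ must come from a face of $P$ of the form $jl'k'$ with $d(D_{l'}, D_{k'})=1$, $d(D_j, D_{l'})=d(D_j, D_{k'})=0$, and $D_j$ tangent to the great circle through the arc $v_{l'}v_{k'}$ at the tangency point of $D_{l'}$ and $D_{k'}$. Since the tangent line to $\partial D_i$ at $p$ coincides with the tangent line to $\partial D_j$ at $p$, both edges are supported on the same great circle. Because the triangulation is geodesic and the interiors of the two faces $ilk$ and $jl'k'$ lie on opposite sides of $p$, these two boundary edges must in fact coincide as an edge of $P$; hence $\{l', k'\} = \{l, k\}$ and the two faces $ilk, klj\in F(P)$ share the edge $lk$. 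This gives exactly the configuration in the statement; conversely, it is straightforward to verify from the planar inversive distance formula that such a configuration does yield $d(D_i, D_j) = 1$.

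The main obstacle I anticipate is the careful verification that the common boundary piece containing $p$ must be an edge (rather than only a vertex) and that the two faces adjacent to that edge are exactly $ilk$ and $klj$; this uses the fact that a geodesic triangulation of $\mathbb{S}^2$ is determined by its edges and that lemma~\ref{lem:geodesictriangles} pins down the tangency point uniquely as the point of tangency of $D_l$ and $D_k$.
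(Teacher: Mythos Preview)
Your proposal is correct and follows essentially the same approach as the paper: both proofs use lemma~\ref{lem:linkcontainment} to trap $D_i$ and $D_j$ in their links, observe that non-adjacent links have disjoint interiors, and then invoke lemma~\ref{lem:geodesictriangles} to pin down the unique equality configuration. The only minor difference is that the paper handles your anticipated ``edge versus vertex'' obstacle combinatorially up front---it simply enumerates that two links of non-adjacent vertices in a triangulation are either disjoint, meet at a single vertex, or share a single boundary edge $lk$ bounding faces $ilk$ and $klj$---whereas you recover the same conclusion geometrically via the common tangent great circle at $p$; both routes are fine.
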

\begin{proof}
	Let $i$ and $j$ be two distinct vertices of $P$ that are not connected by an edge. Let $T$ be the induced geodesic triangulation. Let $L_i$ and $L_j$ denote links of $i$ and $j$ in $T$. Since $T$ is a triangulation and $ij\not\in E(P)$, $L_i$ and $L_j$ are either disjoint, meet at a single vertex, or share a boundary edge between two vertices, say $l$ and $k$ where $ilk$ denotes the face of $L_i$ and $klj$ denotes the face of $L_j$ incident along the edge $lk$. By lemma~\ref{lem:linkcontainment}, $D_i \subset L_i$ and $D_j \subset L_j$. Thus if $L_i$ and $L_j$ are disjoint or meet at a vertex then $D_i$ and $D_j$ are disjoint. If $L_i$ and $L_j$ meet along an edge $kl$ then it is possible that $D_i$ and $D_j$ be tangent. This occurs only if $D_i$ is tangent to the boundary of $L_i$ and $D_j$ is tangent to the boundary of $L_j$. But by lemma~\ref{lem:geodesictriangles} this occurs if and only if $d(D_i, D_l) = d(D_l, D_j) = d(D_j, D_k) = d(D_k, D_i) = 0$ and $d(D_k, D_l) = 1$.
\end{proof}	

\section{Motions of disk patterns}\label{sec:motions}

Recall that a motion $(P, {\bf D}(t))$ is a path through the configuration space $\mathbb{R}^{4n}$ (see sec.~\ref{sec:abstractcpolyhedra}). In this section we investigate the properties of motions that maintain a set of soft constraints defined on the edges of a polyhedron $P$. Let an abstract triangulated polyhedron $P$ be given and let $\mathbf{D}$ be a realization of the vertex set $V(P)$ as a set of disks on $\mathbb{S}^2$. An unconstrained motion of $\mathbf{D}$ defined on an interval $I$ is a continuous family of realizations $\mathbf{D}(t)$ defined for $t\in I$ such that $\mathbf{D}(0) = \mathbf{D}$. Each disk $D_i \in \mathbf{D}$ has a continuous motion in $\mathbf{D}(t)$ given by a continuous path $D_i(t)$ in $\spacetime$. 

If, for each $ij\in E(P)$ and all $t\in I$ the inversive distance across the edge between $D_i$ and $D_j$ stays in the interval $[0, 1]$, we say that the motion is \term{shallow-constrained}. If further, at no point in the motion does a 4-cycle of edges all have inversive distance 0, we say that the motion is \term{strictly shallow-constrained}. Note that the inversive distances along each edge are constrained to the interval but are free to change within that interval. 

\subsection{Shallow-constrained motions of geodesic disk patterns}\label{sec:geodesicpreservation}

We now show that strictly shallow-constrained motions of disk patterns that start as geodesic circle polyhedra, remain geodesic circle polyhedra throughout the motion.

\begin{Theorem}\label{thm:geodesicpreservation}
	Let $(P, \mathbf{D}(t))$ be a strictly shallow-constrained motion of a disk pattern defined for $t\in I$. If there exists a $t'\in I$ such that $(P, \mathbf{D}(t'))$ is a geodesic circle polyhedron, then $(P, \mathbf{D}(t))$ is a geodesic circle polyhedron for all $t\in I$. 
\end{Theorem}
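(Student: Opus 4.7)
The plan is to run an open--closed argument on the connected parameter interval $I$. Let
\[
S = \{\, t \in I : (P, \mathbf{D}(t)) \text{ is a geodesic circle polyhedron}\,\}.
\]
By hypothesis $S$ is nonempty. If I can show $S$ is both open and closed in $I$, then the connectedness of $I$ forces $S = I$, which is the desired conclusion.

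For openness, note that the strict shallow constraint holds at every $t \in I$, so Lemma~\ref{lem:noantipodaledges} and Corollary~\ref{cor:welldefinedcenter} apply throughout: the unique geodesic shortest arc between any pair of neighboring centers is well-defined and varies continuously with $\mathbf{D}(t)$. Being a geodesic triangulation is characterized by the three strict, open conditions (i) no two non-incident arcs meet, (ii) no vertex lies in the interior of an arc it is not incident to, and (iii) each face of $P$ is realized by a nondegenerate spherical triangle. Each of these is stable under small perturbations of the disk configuration, so a neighborhood of any $t_0 \in S$ lies in $S$.

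For closedness, take $t_n \to t^* \in I$ with each $t_n \in S$. Continuity of the motion and the closed nature of the bounds in the strict shallow constraint ensure that $(P, \mathbf{D}(t^*))$ is still a strictly shallow circle polyhedron, and hence its induced arcs remain well-defined. Passing to the limit in Lemma~\ref{lem:localtoglobal}, which applies at every $t_n$, gives $d(D_i(t^*), D_j(t^*)) \geq 1$ for every non-neighbor pair $i,j$; that is, non-neighboring disks at the limit are disjoint or tangent. It remains to rule out each way the limit graph could fail to be a triangulation: (a) three centers of a single face of $P$ fall on a common great circle; (b) the center of some vertex $v$ lands in the interior of a non-incident arc $ij$; (c) two non-incident arcs cross transversally at a point $p$. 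In each case I will apply Lemma~\ref{lem:geodesictriangles} to the relevant triple of disks, then combine with Lemma~\ref{lem:linkcontainment} and the limit bound $d \geq 1$ for non-neighbors to constrain the possibilities to the third (tangent-and-orthogonal) branch of that lemma; the resulting forced configuration of mutually orthogonal tangent disks will close into a 4-cycle of edges all with inversive distance $0$, which is precisely what strict shallowness forbids.

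The main obstacle I anticipate lies in case (c). There are several sub-cases depending on which of the pairs among $\{i,j,k,l\}$ are edges of $P$ and which of the four disks $D_i, D_j, D_k, D_l$ contain the crossing point $p$. Whenever $p$ lies in a non-neighbor pair's intersection, the limit form of Lemma~\ref{lem:localtoglobal} directly yields a contradiction. The delicate situation is when the crossing is ``caused'' entirely by neighbor pairs; here I expect to use the parabolic analysis of Lemmas~\ref{lem:paraboliccases} and \ref{lem:parabolic4diskconfig} to show that the disks touching $p$ must form a parabolic family of two pairs of tangent, mutually orthogonal disks, which once again closes up into a 4-cycle of zero-weight edges in $P$ and violates the strict shallow hypothesis. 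Assembling these case-by-case obstructions completes the closedness step and hence the proof.
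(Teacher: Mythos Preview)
Your open--closed skeleton is correct and matches the paper's underlying topological idea, but you are missing the key simplification that makes the paper's proof short. The paper does not attempt a general analysis of vertex--edge incidences or edge--edge crossings among arbitrary (possibly non-neighboring) vertices. Instead it observes that at the \emph{first} moment the induced geodesic graph ceases to be a triangulation, the degeneracy must occur locally: either some edge $p_ip_j$ collapses to length zero, or some vertex $p_i$ lands on the interior of an arc $p_jp_k$ \emph{where $ijk$ is a face of $P$}. The reason is that just before the failure the drawing is still a triangulation, so each vertex sits in the interior of its own link; the first edge its center can touch is therefore one of the boundary arcs of that link. This reduces everything to a single triple $(D_i,D_j,D_k)$ in which \emph{all three pairwise overlaps are shallow}, so Lemma~\ref{lem:geodesictriangles} applies directly. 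Since the center of $D_i$ is on the arc $s$, all three alternatives of that lemma are immediately ruled out (disjointness fails because the center is in $D_i$; the great-circle case is impossible since the center of a great disk is a pole, not on $s$; and tangency of $D_i$ to $s$ is impossible since $s$ passes through its center). The strictly-shallow hypothesis is used only implicitly here, via Corollary~\ref{cor:welldefinedcenter} to keep the arcs well defined.

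By contrast, your plan treats the degeneracy as possibly involving non-neighboring pairs and so is forced to import Lemma~\ref{lem:localtoglobal} in the limit and the parabolic classification of Lemmas~\ref{lem:paraboliccases}--\ref{lem:parabolic4diskconfig} to handle case~(c). This is not incorrect, but it is a substantially heavier route, and your own sketch flags case~(c) as unfinished. Note also that Lemma~\ref{lem:localtoglobal} is itself proved \emph{using} the geodesic property and link containment, so invoking it here (even only along the approximating sequence $t_n$) is somewhat circular in spirit, whereas the paper's argument needs nothing beyond Lemma~\ref{lem:geodesictriangles} applied to a single face. If you add the ``first failure is inside a link'' observation, your cases~(a)--(c) collapse to the paper's single case and the parabolic machinery becomes unnecessary.
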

\begin{proof}
	For each $t\in I$, let $G(t)$ be the induced geodesic graph for $(P, \mathbf{D}(t))$ and let $n$ denote the number of vertices. Let $\mathbf{v}_t = [x_1(t)\ y_1(t)\ z_1(t)\ \dots\ x_n(t)\ y_n(t)\ z_n(t)]_t^T$ be the vector of coordinates of the vertex positions $p_i = (x_i, y_i, z_i)$ of the center $p_i$ of each disk $D_i$ on $\mathbb{S}^2$ in $\mathbb{E}^3$. The space of all possible $\mathbf{v}_t$ is a compact connected subset of $\mathbb{R}^{3n}$ which is called the configuration space of $G$. Let $\mathcal{C}$ denote the configuration space of $G$.  
	
	Since $\mathbf{D}(t)$ is continuous, $G(t)$ is a continuous family of graph drawings and ${\bf v}_t$ traces out a path in $\mathcal{C}$. Notice that if $G(t)$ is a triangulation, then there is an open neighborhood $U$ of ${\bf v}_t$ where all ${\bf u}\in U$ correspond to triangulations of $\mathbb{S}^2$. Similarly, if $G(t)$ has self-intersecting edges that intersect strictly on their interior, then there is a neighborhood of $G(t)$ in $\mathcal{C}$ in which all nearby drawings are strictly self-intersecting. Additionally, if each edge length is non-zero, then there is an open neighborhood of $G(t)$ in $\mathcal{C}$ in which all edge lengths are non-zero. 
	
	Therefore, if there exists at any time $t$ a graph $G(t)$ that is not a triangulation of $\mathbb{S}^2$ there must be a time $0 < t' \leq t$ at which $G(t')$ either has an edge $p_i p_j$ shrink to zero length, or a vertex $p_i$ intersect an edge $p_j p_k$ for some triangle $ijk \in F(G)$. In the first case, the inversive distance $d(D_i, D_j)$ is either $-1$, if $D_i = D_j$, or is less than $-1$, if $D_i \subset D_j$ or $D_j \subset D_i$. This contradicts that the motion is shallow-constrained. On the other hand, if $p_i$ intersects $p_j p_k$, then the disk $D_i$ intersects the geodesic shortest path segment between the centers of $D_j$ and $D_k$. But this contradicts lemma~\ref{lem:geodesictriangles} (note that $D_i$ cannot be the great disk whose boundary $\partial D_i$ contains $p_j p_k$ and have its center on $p_j p_k$ so condition 1 of lemma~\ref{lem:geodesictriangles} is not applicable).
\end{proof}

\subsection{Shallow-constrained motions of geodesic convex disk patterns}\label{sec:convexpreservation}

Similar to the previous section, we show that strictly shallow-constrained motions of disk patterns that start out as both geodesic and strictly convex circle polyhedra stay strictly convex circle polyhedra throughout the motion.

\begin{Theorem}\label{thm:convexpreservation}
	Let $(P, \mathbf{D}(t))$ be a  strictly shallow-constrained motion of a disk pattern defined for $t\in [0, T)$ for some $T\in\mathbb{R}\cup\{\infty\}$. Let $n > 4$ be the number of vertices in $P$. If $(P, \mathbf{D}(0))$ is a strictly convex geodesic circle polyhedron, then $(P, \mathbf{D}(t))$ is a strictly convex circle polyhedron for all $t\in [0, T)$. 
\end{Theorem}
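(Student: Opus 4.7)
The plan is to apply Lemma~\ref{lem:maintainconvexity1} to the motion $(P, \mathbf{D}(t))$ for $t \in [0, T)$, verifying each of its three hypotheses throughout the interval. By Theorem~\ref{thm:geodesicpreservation}, the motion remains a geodesic circle polyhedron, so Lemma~\ref{lem:localtoglobal} provides the global bound $d(D_a, D_b) > 1$ for every non-edge pair $a, b$ (strictness coming from the strictly shallow-constrained hypothesis, which precludes the equality case of that lemma). Condition~1 of Lemma~\ref{lem:maintainconvexity1} (each $D_i(t)$ remains a real disk) holds because every vertex has a neighbor in the triangulation, and shallow overlap $d(D_i, D_j) \in [0, 1]$ requires both endpoint disks to be real. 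Condition~2 ($D_i(t) \neq D_j(t)$ across edges) is immediate, since equal normalized disks give $d = -1 \notin [0, 1]$.

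Condition~3---that no four disks $D_i, D_j, D_k, D_l$ arising from faces $ijk, ljk \in F(P)$ sharing an edge $jk$ ever become coplanar in $\spacetime$---is the core of the proof. Suppose for contradiction this fails at some time $t^* \in [0, T)$. Since the geodesic triangulation is non-degenerate, no three of these disks lie in a common coaxial family (such a triple would have centers on a common great circle, forcing a degenerate spherical triangle, contradicting Theorem~\ref{thm:geodesicpreservation}), so Lemma~\ref{lem:cplanarity} applies to the triples $(D_i, D_j, D_k)$ and $(D_l, D_j, D_k)$. In cases~1 and~2 of that lemma (hyperbolic or parabolic c-plane), the conclusion $d(D_i, D_l) < 0$ is contradicted either by the shallow constraint (when $il \in E(P)$) or by Lemma~\ref{lem:localtoglobal} (when $il \notin E(P)$). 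The exceptional parabolic sub-case with $d(D_j, D_k) = d(D_i, D_l) = 1$ and the remaining four pairwise distances equal to $0$ produces the 4-cycle of edges $ij, jl, lk, ki \in E(P)$ all of weight $0$, directly violating the strictly shallow-constrained hypothesis.

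The hard part will be case~3 (elliptic c-plane), where the non-negative subcase forces the four disks to be Möbius-equivalent to great hemispheres covering $\mathbb{S}^2$. This is where the hypothesis $n > 4$ enters crucially. Pick any fifth vertex $m \in V(P) \setminus \{i, j, k, l\}$; since the four hemispheres cover $\mathbb{S}^2$, the center of $D_m$ on $\mathbb{S}^2$ lies in the interior of some $D_a$ with $a \in \{i, j, k, l\}$. The spherical analogue of the final planar lemma of Section~2.2.1 (center of one disk contained in the interior of another forces negative inversive distance) then yields $d(D_m, D_a) < 0$, giving the familiar contradiction via either the shallow constraint (if $ma \in E(P)$) or Lemma~\ref{lem:localtoglobal} (if $ma \notin E(P)$). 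The lone subtlety is the knife-edge configuration in which $p_m$ lies on every boundary $\partial D_a$ simultaneously, so that $d(D_m, D_a) = 0$ for all four $a$; this too leads to contradictions, since any non-edge among $\{mi, mj, mk, ml\}$ violates Lemma~\ref{lem:localtoglobal}, and the remaining case where all four are edges forces enough zero-weight 4-cycles in $P$ (via $mi, ik, kj, jm$ and similar) to violate the strictly shallow hypothesis after a short combinatorial analysis.
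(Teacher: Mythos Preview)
Your overall architecture matches the paper's proof: reduce via Lemma~\ref{lem:maintainconvexity1}, then invoke Lemma~\ref{lem:cplanarity} to dispose of the three c-plane types, using Theorem~\ref{thm:geodesicpreservation} and Lemma~\ref{lem:localtoglobal} to supply the contradictions. Your handling of conditions~1--2 and of the hyperbolic and parabolic cases is fine (and in fact slightly more explicit than the paper on condition~1).

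The difference is in the elliptic case. The paper simply quotes the ``Furthermore'' clause of Lemma~\ref{lem:cplanarity}, case~3: once $D_i\cup D_j\cup D_k\cup D_l=\mathbb{S}^2$, no fifth disk can have shallow overlaps with all four, so some $d(D_m,D_a)<0$ and the usual edge/non-edge dichotomy finishes. You instead try to re-derive that clause via a center-containment argument, and here there is a genuine gap. The ``spherical analogue of the final planar lemma of Section~2.2.1'' you invoke is \emph{false} on $\mathbb{S}^2$: take two disks of spherical radius $2\pi/3$ with orthogonal centers (say centered at $(0,0,1)$ and $(1,0,0)$); each center lies in the interior of the other disk, yet $d=1/3>0$. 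What saves your situation is the special fact that after the Möbius transformation the containing disk $D_a$ is a \emph{hemisphere}: if $D_a=(0,n_a)$ with $|n_a|=1$ and the (transformed) center $c_m$ of $D_m$ satisfies $n_a\cdot c_m>0$, then $d(D_a,D_m)=-\,(n_a\cdot c_m)/\sin\rho_m<0$ regardless of $\rho_m$. You need to state and use this hemisphere-specific computation, not a nonexistent general lemma; or, more simply, just cite the ``Furthermore'' clause as the paper does.

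Your knife-edge sub-case is also unnecessary work. The paper's disks are open by definition, and Lemma~\ref{lem:cplanarity} asserts that the four open disks already cover $\mathbb{S}^2$; hence the center of the transformed $D_m$ lies in some \emph{open} hemisphere and the boundary-only case never arises. (Alternatively: $p_m$ on all four boundaries would force $p_m$ orthogonal to all four normals $n_i,n_j,n_k,n_l$, impossible since your ``no three coaxial'' hypothesis makes $n_i,n_j,n_k$ linearly independent in $\mathbb{R}^3$.)
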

\begin{proof}	
We first note that for every $ij\in E(P)$ $D_i(t)\neq D_j(t)$ at any time $t\in [0, T)$. To the contrary, if $D_i(t)=D_j(t)$ then $d(D_i(t), D_j(t)) = -1$ and the motion is not strictly shallow. 

Now assume for contradiction that at some time $t'\in [0, T)$ $(P, {\bf D}(t'))$ is not strictly convex. By lemma~\ref{lem:maintainconvexity1} there must be a time $t''\in[0, T)$ where $(P, {\bf D}(t''))$ has two neighboring faces $ijk$ and $kjl$ such that $D_i(t'')$, $D_j(t'')$, $D_k(t'')$, and $D_l(t'')$ are coplanar. 

Let $\Pi$ denote the c-plane containing $D_i(t'')$, $D_j(t'')$, $D_k(t'')$, and $D_l(t'')$. By lemma~\ref{lem:cplanarity}, $\Pi$ is not hyperbolic. If it is parabolic, then the four cycle of edges $ij$, $jl$, $lk$, $ki$ all have inversive distance 0, contradicting the strictly shallow assumption of the hypothesis. If it is elliptic, then the union $D_i(t'') \cup D_j(t'') \cup D_k(t'') \cup D_l(t'')$ covers the sphere $\mathbb{S}^2$ and any other disk $D_m(t'')$ has a negative inversive distance to at least one of $D_i(t'')$, $D_j(t'')$, $D_k(t'')$, or $D_l(t'')$. Without loss of generality assume that $d(D_i(t''), D_m(t'')) < 0$. If $im\in E(P)$ this contradicts the shallowness assumption. Suppose $im\not\in E(P)$. By theorem~\ref{thm:geodesicpreservation} $(P, \mathbf{D}(t''))$ is geodesic. But then by lemma~\ref{lem:localtoglobal}, $d(D_i(t''), D_m(t'')) > 1$, a contradiction. 
\end{proof}

\subsection{Radius collapsing and encompassing motions of shallow circle packings are avoidable}

In this section we show that the extra conditions in the statement of the Koebe-Andre'ev-Thurston theorem (conditions 1 and 2) allow us to avoid motions of disk patterns in which a disk approaches a point-disk (either by its area approaching 0 or $4\pi$). One of our main analysis tools comes from the following observation. Suppose we reparametrize each disk by the coordinates of its center $(x, y, z)$ on the sphere $\mathbb{S}^2$ and its spherical radius $\rho\in[0, \pi]$, with $\rho = 0, \pi$ denoting a disk whose boundary has degenerated into a single point. Then the configuration of one disk is a subset of $\mathbb{R}^4$ which lies within the region defined by the bounds $-1 \leq x, y, z \leq 1$ and $0 \leq \rho \leq \pi$. A parametrization of $n$ disks similarly falls within a bounded region of $\mathbb{R}^{4n}$. Consider any motion ${\bf D}(t)$ of disks under this parametrization defined for $t\in[0,T)$ for some $T\in\mathbb{R}^+\cup\{\infty\}$. The motion traces a path in the configuration space $\mathbb{R}^{4n}$, which by the discussion above falls within a bounded region. Then by the Bolzano-Weierstrass theorem, there exists a convergent subsequence. We now ask, when is it possible that as $t\rightarrow T$, the radius of some disk $D_i(t)$ approaches zero?

In general, we can by a Möbius flow collect all disk boundary circles down to a point. Simply select two antipodal points neither of which is contained on the boundary of any disk and then compute the standard Möbius flow out from one antipodal point towards the other. All points of the sphere collect at the sink pole and thus every disk approaches either 0 or $4\pi$ area. To avoid this situation we must additionally pin-down three disks. Let $ijk \in F(P)$. We call a motion $(P, {\bf D}(t))$ a \term{pinned motion} with respect to $ijk$ if $D_i(t)$, $D_j(t)$, and $D_k(t)$ are constant throughout the motion. 

\begin{Lemma}\label{lem:contractionimplications}
	Let $(P, {\bf D}(t))$ be a strictly shallow-constrained motion of a circle polyhedron for $t\in [0, T)$ for some $T\in\mathbb{R}^+\cup\{\infty\}$ with $(P, {\bf D}(t))$ geodesic. Suppose some disk boundary $\partial D_i$ vanishes to the a point as $t\rightarrow T$ (but at no earlier time). Let $V$ denote the maximal set of vertices with vanishing boundary circles that are edge-connected to $i$  and $V'$ denote the set of vertices with boundaries that do not contract but are connected by an edge to a vertex in $V$. Then $|V'|\leq 4$. 
	
	Furthermore:
	\begin{enumerate}
		\item If $|V'| = 4$, then the vertices form a 4-cycle of edges that limit to inversive distance 0 along each edge and the edges do not bound two neighboring faces of $P$. 
		\item If $|V'| = 3$, then the vertices form a 3-cycle of edges that limit to inversive distance 0 along each edge and it is not possible that all three are pinned. Furthermore, the sum of the overlap angles of the 3-cycle converges to $\pi$. 
		\item If $|V'| \leq 2$, then $V \cup V' = V(P)$ and it is not possible that three disks are pinned in $(P, {\bf D}(t))$.
	\end{enumerate}
\end{Lemma}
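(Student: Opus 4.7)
The plan is to extract a limit configuration as $t\to T$ via compactness, use the planar inversive-distance formula to identify where things collapse and how boundaries behave at the limit point, and then apply the parabolic-configuration lemmas from \S2 together with the combinatorics of the induced geodesic triangulation to handle each case by $|V'|$.

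First I would reparametrize each disk by its spherical center and spherical radius $(x,y,z,\rho)\in \mathbb{S}^2\times[0,\pi]$, which puts all configurations in a compact subset of $\mathbb{R}^{4n}$, and extract a subsequential limit $\mathbf{D}^*$ as $t\to T$. For any edge $ij\in E(P)$ with both endpoints in $V$, the planar formula $d(D_i,D_j) = (d_{ij}^2 - r_i^2 - r_j^2)/(2r_ir_j)\in[0,1]$ combined with $r_i,r_j\to 0$ forces the spherical distance between the centers to $0$, so the two disks limit to the same point. Walking along edges in the connected subgraph $V$ shows that every disk in $V$ limits to a single common point $p\in\mathbb{S}^2$. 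For each $j\in V'$, there is an edge from $j$ to some vertex in $V$; the same formula with $r_i\to0$, $r_j$ bounded away from $0$, and $d(D_i,D_j)\in[0,1]$ forces $d_{ij}\to r_j$, so $p\in\partial D_j^*$.

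Next I bound $|V'|$. In the limit, every disk $D_j^*$ for $j\in V'$ is a real disk of positive radius whose boundary circle passes through $p$. Any two such circles share $p$, hence their inversive distance lies in $[-1,1]$. Edge-adjacent pairs in $V'$ are shallow throughout the motion, so their limit inversive distance lies in $[0,1]$. For non-edge pairs in $V'$, Theorem~\ref{thm:geodesicpreservation} guarantees $(P,\mathbf{D}(t))$ is geodesic throughout, so Lemma~\ref{lem:localtoglobal} gives $d\geq 1$ at every time; together with the upper bound $\leq 1$ at the limit, equality holds and the pair is tangent at $p$. Thus the limit disks in $V'$ meet at $p$ with pairwise shallow overlaps, and Corollary~\ref{cor:no5shallowinparabolic} yields $|V'|\leq 4$. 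The induced geodesic triangulation also tells me that the boundary of the combinatorial star of $V$ is a cycle in $P$ through the vertices of $V'$, unless $V\cup V' = V(P)$.

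For the case analysis: when $|V'|=4$, Lemma~\ref{lem:parabolic4diskconfig} forces the four limit disks into two pairs of tangent disks with mutually orthogonal coaxial families, so in the cyclic order around $V$ the four consecutive limit inversive distances are $0,0,0,0$ and the two diagonals limit to $1$. Since $V$ is nonempty and sits inside the 4-cycle, the cycle cannot bound the union of two adjacent faces of $P$. When $|V'|=3$, the three limit disks form a parabolic triple meeting at $p$ with shallow pairwise overlaps, so by Corollary~\ref{cor:tripleanglesum} the overlap-angle sum converges to $\pi$. The edges of the 3-cycle are inherited from the link cycle in $P$. For the pinning claim, three pinned disks would be constant throughout, so their boundaries would already pass through $p$ at every time (not just in the limit); combined with the fact that a vertex in $V$ adjacent to all three must vanish to $p$ while lying in the common interior region they define, a direct contradiction with strict shallowness (via Lemma~\ref{lem:paraboliccases} applied at each time $t$ near $T$) rules this out. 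When $|V'|\leq 2$, no simple cycle with at most two vertices can separate the sphere in the triangulation, so the star of $V$ must be all of $\mathbb{S}^2$, giving $V\cup V' = V(P)$; if three disks were pinned they would all avoid $V$ and hence lie in $V'$, contradicting $|V'|\leq 2$.

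The main obstacle I anticipate is the pinning sub-claim in the $|V'|=3$ case: turning the statement ``three pinned disks form a parabolic triple at $p$ while a disk adjacent to all of them vanishes to $p$'' into a clean contradiction requires a careful local argument near $p$, likely using the strict-shallow hypothesis to exclude the degenerate configuration where all three pinned disks already share a boundary point throughout the motion. The other cases follow quite directly from the limiting geometry and the parabolic configuration lemmas already in hand.
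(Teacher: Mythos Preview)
Your approach is essentially the paper's: extract a subsequential limit, argue all disks in $V$ collapse to a single point $p$, show every $D_j^*$ with $j\in V'$ has $p$ on its boundary, then invoke the parabolic configuration lemmas and the geodesic property (via Lemma~\ref{lem:localtoglobal}) to bound $|V'|$ and run the case analysis. Your treatment of $|V'|\leq 2$ and of the angle-sum and pinning obstruction for $|V'|=3$ matches the paper's reasoning (and your anticipated difficulty there is exactly where the paper spends its effort, using strict shallowness to rule out the tangent-pair configuration of Lemma~\ref{lem:paraboliccases} so that any small disk near $p$ is forced into a deep overlap with one of the three fixed disks).

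There is, however, a genuine gap in your $|V'|=4$ argument. Saying ``$V$ is nonempty and sits inside the 4-cycle'' only excludes the possibility that the two adjacent faces lie on the \emph{$V$-side} of the cycle. It does not exclude the case where the 4-cycle bounds two adjacent faces on the \emph{opposite} side; in that situation one has $V\cup V'=V(P)$ and your one-line observation gives no contradiction. The paper handles this remaining case by a separate argument: if the complement of the 4-cycle is just two triangles, then as $t\to T$ the induced geodesic graph limits to the four triangles $p_1p_2p$, $p_2p_3p$, $p_3p_4p$, $p_4p_1p$, whose outer boundary is a quadrilateral rather than a triangulation of $\mathbb{S}^2$; hence for $t$ close enough to $T$ the induced graph already fails to be a triangulation, contradicting Theorem~\ref{thm:geodesicpreservation}. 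You need this (or an equivalent) step to close the case.
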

\begin{proof}
	Suppose the boundary of some disk $D(t)$ vanishes as $t\rightarrow T$. By the Bolzano-Weierstrass theorem there is a convergent subsequence of $(P, {\bf D}(t))$. Let $D_i'$ denote the disk that $D_i(t)$ converges to in this subsequence. Call this the convergent set ${\bf D}$. By theorem~\ref{thm:geodesicpreservation}, $(P, {\bf D}(t))$ is geodesic for all $t\in[0, T)$. 
	
	{\em Inversive Distance Convergence Property (IDCP).} By strict shallowness, for every edge $ij\in E(P)$, $d(D_i(t), D_j(t)) \in (0, 1]$ and thus $d(D_i', D_j') \in [0, 1]$. Because its geodesic, for a non-edge pair $i, j\in V(P)$, $d(D_i(t), D_j(t)) > 1$ (by lemma~\ref{lem:localtoglobal}), and thus $d(D_i', D_j') \geq 1$. 	
	Let $V$ be the maximal edge-connected set (to the vanishing disk $D(t)$) of vertices whose boundary circles vanish and $V'$ be the vertices connected by an edge in $P$ to a vertex in $V$ with boundary circles that do not vanish. Every disk in $V$ converges to a single point $p$ in the convergent set. 
	
	{\em Distinct disk property.} We first claim that it is not the case that $D_i' = D_j'$ for any distinct $i, j\in V'$. Assume not.  The radii of $D_i'$ and $D_j'$ are bounded away from $0$. Then the inversive distance is $d(D_i', D_j') = -1$ contradicting (IDCP).
	
	\paragraph{Proof that $|V'| < 5.$}
	We next claim that $|V'| < 5$. For contradiction assume that $V'$ contains at least five vertices. Let $D_1'$, $D_2'$, $D_3'$, $D_4'$, and $D_5'$ denote the disks corresponding to these vertices. By the discussion above these five disks are distinct. Since each of them is the convergence disk of a disk overlapping some disk in $V$ and everything in $V$ converges to a point $p$, the boundary circles $\partial D_1', \dots, \partial D_5'$ all contain $p$. Now, by theorem~\ref{thm:geodesicpreservation}, $(P, {\bf D}(t))$ is geodesic for all $t\in[0, T)$. Then by corollary~\ref{cor:no5shallowinparabolic} at least two of the disks must have a non-shallow overlap, which contradicts (IDCP).
	
	\paragraph{Analysis of $|V'|=4$.}
	If $|V'|=4$, the four disks $D_1'$, $D_2'$, $D_3'$, and $D_4'$ meet at a point. By lemma~\ref{lem:paraboliccases}, in order to not contradict (IDCP), two pairs of these disks are tangent and the pairs are mutually orthogonal. Without loss of generality assume that $(D_1', D_3')$ are tangent and $(D_2', D_4')$ are tangent, and the two pairs are mutually orthogonal. 
	
	Then $D_1'$, $D_2'$, $D_3'$, $D_4'$ is a cycle of disks each of which is (cyclically) orthogonal to the next. Thus at all $t\in[0, T)$ at least one of the overlap angles for the edges $12$, $13$, $24$, and $34$ is greater than $0$ but in the limit equal $0$. Assume that the cycle of edges $(12, 23, 34, 41)$ bounds two adjacent faces of $P$. Then all of the other vertices must be in $V$ and all of the disks save $D_1$ through $D_4$ limit to the point common to $D_1'$, $D_2'$, $D_3'$, $D_4'$. But then the geodesic triangulation induced by $(P, {\bf D}(t))$ approaches the configuration of geodesic triangles $p_1 p_2 p$, $p_2 p_3 p$, $p_3 p_4 p$, $p_4 p_1 p$ where $p_i$ denotes the center of $D_i'$ and $p$ is the point corresponding to the collapsed disks in $V$. But the outer boundary of this configuration is a 4-cycle $(p_1, p_2, p_3, p_4)$ and thus not a triangulation of $\mathbb{S}^2$. But then for sufficiently large $t$, the geodesic graph induced by $(P, {\bf D}(t))$ is not a triangulation. This contradicts theorem~\ref{thm:geodesicpreservation}.
		
	\paragraph{Analysis of $|V'|=3$.} In this case the three disks $D_1'$, $D_2'$, $D_3'$ must all have pairwise inversive distances in $[0, 1]$ and thus $123$ is a cycle of edges in $P$. By lemma~\ref{lem:paraboliccases}, the sum of the overlap angles is $\pi$. Now suppose all three disks are pinned: $D_1(t) = D_1'$, $D_2(t) = D_2'$, and $D_3(t) = D_3'$ are constant. By strict shallowness, none of the three are tangent at $p$, and thus any small disk containing $p$ will overlap at least one of the disks by more than $\pi/2$. Thus any of the disks corresponding to vertices in $P$ will overlap at least one of them by more than $\pi/2$. But any of the disks in $V$ converge to small disks containing $p$, and thus at some time $t'\in[0, T)$ there is a disk $D_i$ for some $i\in V$ overlapping one of the disks, say $D_1(t')$ such that $d(D_i(t'), D_1(t')) < 0$ contradicting the shallow overlap.
	
	\paragraph{Analysis of $|V'| \leq 2$.} In this case, since $V'$ is the boundary of a connected set of vertices $V$ on a triangulated polyhedron and $|V'|\leq 2$ it must be the case that every vertex of $P$ is either in $V$ or in $V'$. Then if any three disks are pinned at least one of the disks in $V$ is pinned and thus it cannot vanish, a contradiction.
\end{proof}

We now show that the extra conditions from the Koebe-Andre'ev-Thurston theorem (in the limit), coupled with three disks whose radii are bounded away from zero guarantees that no disk vanishes for any strictly shallow motion of a disk pattern that starts out geodesic and has three pinned disks.

\begin{Theorem}\label{thm:katconditionsnocollapse}
	Let $(P, {\bf D}(t))$ be a strictly shallow motion of a disk pattern for $t\in[0,T)$ for some $T\in\mathbb{R}^+\cup\{\infty\}$ such that $(P, {\bf D}(0))$ is geodesic. Suppose further that for one face $ijk$ the disks $D_i(t)$, $D_j(t)$, and $D_k(t)$ are constant. Finally, letting $d(e(t))$ denote the inversive distance between the disks corresponding to the endpoints of an edge $e\in E(P)$ at time $t\in[0, T)$, suppose that: 
	\begin{enumerate}
		\item If $e_1$, $e_2$, $e_3$ form a closed loop of edges such that at some $t_0\in[0, T]$ $$\lim_{t\rightarrow t_0} \sum_{k=1}^3 \arccos d(e_k(t)) \geq \pi,$$ then $e_1$, $e_2$, and $e_3$ bound a face of $P$. 
		\item If $e_1$, $e_2$, $e_3$, and $e_4$ form a closed loop of edges such that at some $t_0\in [0, T]$, $$\lim_{t\rightarrow T}\sum_{i=1}^4 \arccos d(e_i) = 2\pi$$ then $e_1$, $e_2$, $e_3$, and $e_4$ bound the union of two faces of $P$. 
	\end{enumerate}
	Then no disk in ${\bf D}(t)$ vanishes as $t\rightarrow T$. 
\end{Theorem}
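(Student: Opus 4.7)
The plan is to argue by contradiction, reducing immediately to the three sub-cases of Lemma~\ref{lem:contractionimplications} and ruling out each using the two hypotheses~(1) and~(2) together with the three pinned disks. Suppose for contradiction that some boundary $\partial D_\ell(t)$ vanishes as $t\to T$. Since $(P, {\bf D}(0))$ is geodesic and the motion is strictly shallow-constrained, Theorem~\ref{thm:geodesicpreservation} gives that $(P, {\bf D}(t))$ remains geodesic throughout $[0,T)$, so the hypotheses of Lemma~\ref{lem:contractionimplications} are met. We obtain the maximal edge-connected vanishing set $V$ and its external neighbor set $V'$ with $|V'|\leq 4$, classified into the three sub-cases.

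The cases $|V'|=4$ and $|V'|\leq 2$ are immediate. When $|V'|=4$, the lemma asserts that $V'$ forms a 4-cycle whose inversive distances along each edge tend to $0$, so the corresponding overlap angles each tend to $\pi/2$ and sum to $2\pi$ in the limit. Hypothesis~(2) then forces the cycle to bound the union of two faces of $P$, directly contradicting the lemma's conclusion that this 4-cycle does \emph{not} bound two neighboring faces. When $|V'|\leq 2$, the lemma directly asserts that three pinned disks cannot coexist with such a vanishing event, contradicting the hypothesis that $D_i(t)$, $D_j(t)$, and $D_k(t)$ are all constant throughout the motion.

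The main obstacle is the case $|V'|=3$, which requires a short topological argument on the triangulated sphere. The lemma gives that $V'$ is a 3-cycle whose overlap angles sum to $\pi$ in the limit, so hypothesis~(1) forces $V'$ to bound a face of $P$. That face has no interior vertices, and $V$ is a non-empty connected region separated from the opposite side of $V'$ in the triangulation, so the only possibility is $V(P) = V \cup V'$. The pinned vertices $i, j, k$ never vanish and hence lie outside $V$; combined with $|V'|=3$, this forces $V' = \{i, j, k\}$, making all three vertices of $V'$ pinned, which again contradicts the lemma. All three sub-cases collapse to contradictions, so no disk can vanish as $t\to T$.
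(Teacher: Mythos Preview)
Your proof is correct and follows essentially the same approach as the paper: both argue by contradiction, invoke Theorem~\ref{thm:geodesicpreservation} to ensure the hypotheses of Lemma~\ref{lem:contractionimplications} are met, and then dispatch the three sub-cases $|V'|\le 2$, $|V'|=3$, $|V'|=4$ in the same way. Your handling of the $|V'|=3$ case is in fact slightly more explicit than the paper's, which simply asserts ``then every vertex not in $V'$ must be in $V$'' without the topological justification you sketch.
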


\begin{proof}
	Since $(P, {\bf D}(0))$ is geodesic, by theorem~\ref{thm:geodesicpreservation}, $(P, {\bf D}(t))$ is geodesic for all $t\in[0,T)$. 
	
	Suppose now that some disk's radius vanishes and let $V$ be a maximally edge-connected set of disks whose radius vanishes. Let $V'$ denote the vertices of $V(P)\backslash V$ connected by an edge in $E(P)$ to a vertex in $V$. By lemma~\ref{lem:contractionimplications}, $|V'| \leq 4$. 
	
	If $|V'| \leq 2$, then it is not possible that three disks are pinned, a contradiction.
	
	Suppose that $|V'| = 3$. Then by lemma~\ref{lem:contractionimplications} the vertices of $V'$ are connected by a three cycle of edges $e_1$, $e_2$, and $e_3$ whose angle sum approaches $\pi$. Then by hypothesis $e_1$, $e_2$, and $e_3$ bound a face of $P$. Then every vertex not in $V'$ must be in $V$. But none of the vertices in $V$ can be constant, since their boundary circles contract to a point. But then the vertices in $V'$ must be pinned contradiction lemma~\ref{lem:contractionimplications}.

	Finally, suppose that $|V'| = 4$. Then by lemma~\ref{lem:contractionimplications} there is a 4-cycle of edges $e_1$, $e_2$, $e_3$, and $e_4$ connecting the vertices of $E(P)$ whose inversive distances all approach $0$ and this 4-cycle does not bound two neighboring faces of $P$.  But by hypothesis, these edges must bound two neighboring faces, a contradiction. 
\end{proof}

\section{Infinitesimal rigidity of convex circle packings}\label{sec:infrigidity}

We now develop the infinitesimal rigidity of strictly convex circle packings. This section generalizes the results of \cite{BBP19} in the context of convex circle polyhedra. 

\subsection{Rigidity matrix}

Let $(P, \mathbf{D})$ be a triangulated circle polyhedron with $n$ vertices. Since $P$ is triangulated it has $m = 3n-6$ edges. Think of $\mathbf{D}$ as  a configuration in the configuration space $\mathbb{R}^{4n}$: 
\[
\mathbf{D} = [ a_1\ b_1\ c_1\ d_1\ \dots \ a_n\ b_n\ c_n\ d_n]^T\in \mathbb{R}^{4n}.
\]
Where the coordinates of each disk $D_i = (a_i, b_i, c_i, d_i)$ are normalized to lie on the de Sitter sphere. Then $d(D_i, D_j)=-\langle D_i, D_j \rangle_{1,3}$. Define a measurement function $f:\mathbb{R}^{4n}\rightarrow\mathbb{R}^{4n-6}$, which measures the negative length of each edge $ij\in E(P)$ as well as half the squared Minkowski norm of each vertex. Thus $f$ has $3n-6$ measurements $f_{ij}$ indexed by the edges of $P$ and $n$ measurements $f_i$ indexed by the vertices of $P$. The entries of $f$ are defined by 
\begin{equation}
	f_{ij}(\mathbf{D})=-d(D_i, D_j) = \langle D_i, D_j\rangle_{1,3}
\end{equation}
and
\begin{equation}
	f_i(\mathbf{D})=(1/2)\langle D_i, D_i\rangle_{1,3}.
\end{equation}
Let $J$ denote the Jacobian matrix of $f$. $J$ is called the \term{rigidity matrix} of $(P, {\bf D})$. Each row corresponds to either an edge $ij$ or a vertex $i$ of $P$. The columns represent the $4n$ coordinates of the configuration $\mathbf{D}$. The row corresponding to the $ij$ edge of $J$ is zero everywhere except at the entries corresponding to the coordinates of $D_i = (a_i, b_i, c_i, d_i)$ and $D_j = (a_j, b_j, c_j, d_j)$. The entries of the row are
\begin{equation}
\begin{array}{cccccccccccc}
	 & \dots & a_i & b_i & c_i & d_i & \dots & a_j & b_j & c_j & d_j & \dots \\ 
	J_{ij}=& (\dots 0 \dots & a_j & -b_j & -c_j & -d_j & \dots 0 \dots & a_i & -b_i & -c_i & -d_i & \dots 0 \dots ).
\end{array}
\end{equation}
(Notationally, in the equation above, the first line gives the column labels and the second line the definition of the row $J_{ij}$.)
The row corresponding to the vertex $i$ of $J$ is similarly zero everywhere except at the entries corresponding to the coordinates of $D_i$:
\begin{equation}
\begin{array}{ccccccc}
	 & \dots & a_i & b_i & c_i & d_i & \dots  \\ 
	J_{i}=& (\dots 0 \dots & a_i & - b_i & - c_i & - d_i & \dots 0 \dots  ).
\end{array}
\end{equation}
By construction, the null space of $J$ corresponds to the space of infinitesimal motions of the disk set $\mathbf{D}$ that maintain the inversive distances on each edge $ij\in E(P)$ and maintain that each vertex $i\in V(P)$ remains on the de Sitter sphere. Since the Lorentz transformations map $\mathrm{d}\mathbb{S}^3$ to itself and maintain the Minkowski inner product, they form a 6-dimensional family of trivial motions. Thus the dimension of the null space of $J$ must be at least 6, corresponding to the ininitessimal Lorentz transformations. By the rank-nullity theorem $J$ has rank at most $4n-6$. If the rank of $J$ equals $4n-6$, then the only motions of $(P, {\bf D})$ must be Lorentz transformations, and therefore trivial. In this case we say that $(P, {\bf D})$ is \term{infinitesimally rigid}. Otherwise, we say that $(P, {\bf D})$ is infinitesimally flexible. 

\paragraph{Proof of infinfinitesimal rigidity of strictly shallow convex geodesic circle polyhedra.}\label{sec:cauchylemmas}

We now prove that strictly convex geodesic circle polyhedra are infinitesimally rigid. We first remind the reader of the Cauchy index lemma, which have at this point become a standard tool for analyzing polyhedra. We state Cauchy's index lemma in two forms: the weak Cauchy index lemma, which was used by Cauchy to prove the global rigidity of convex Euclidean polyhedra; and a stronger form, which implies the weak.

Let $G$ be a graph drawn on a topological sphere with arcs representing edges such that no two arcs intersect and no vertex touches an edge it is not incident to. Call this a \term{topologically planar graph drawing}. The orientation on the sphere imposes a cyclic ordering of the edges incident around each vertex. Label each edge of $G$ with a sign $+$, $-$, or 0. Around any vertex of $G$ in order we may list the signs in cyclic order (ignoring 0) and count the number of sign changes in the list. Call the number of sign changes around a vertex $v$ the \term{index of $v$}, denoted $\mathrm{ind}(v)$. Since the sign changes are a rotation around each vertex, the index of any vertex is even. Cauchy's weak index lemma follows. 
\begin{Lemma}[Weak Cauchy index lemma]\label{lem:weakcauchyindex}
Let $G$ be a topologically planar graph drawing. Label each edge of $G$ with $+$, $-$, or $0$ such that at least one edge of $G$ is not labeled 0. Let $\textrm{ind}(v)$ denote the index of each vertex $v \in V(G)$. Then there exists at least one vertex $v$ incident to an edge labeled $+$ or $-$ such that $\textrm{ind}(v)\in\{0, 2\}$. 
\end{Lemma}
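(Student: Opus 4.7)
The plan is to execute Cauchy's classical double-counting argument, adapting it to accommodate $0$-labeled edges. I argue by contradiction, assuming every vertex incident to a $\pm$-edge has index at least $4$. Setting $V'$ to be the number of such vertices and $I = \sum_v \mathrm{ind}(v)$ immediately produces the lower bound $I \geq 4 V'$.

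I then re-express $I$ as a sum over faces of the drawing. Each sign change at a vertex $v$ is a pair of $\pm$-edges cyclically consecutive at $v$ (ignoring any $0$-edges between them), and each such pair bounds exactly one face corner of $G$, so $I = \sum_F c(F)$, where $c(F)$ counts the sign changes observed around $\partial F$ in the cyclic sequence of $\pm$-edge labels. Because this sequence is cyclic, $c(F)$ is even, so $c(F) \leq 2\lfloor k(F)/2 \rfloor$, where $k(F)$ is the number of $\pm$-edges on $\partial F$. Since each $\pm$-edge appears on exactly two face boundaries, summing yields $I \leq 2 E' - O$, where $E'$ is the number of $\pm$-edges and $O$ is the number of faces whose $\pm$-length $k(F)$ is odd.

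The contradiction comes from Euler's formula applied to the sub-drawing $G'$ of $\pm$-edges on the sphere. Writing $f_3$ for the number of triangular faces of $G'$ and $F'$ for the total face count, the standard inequality $\sum_k k f_k \geq 4 F' - f_3$ combined with $V' - E' + F' \geq 2$ gives $f_3 \geq 2 E' - 4 V' + 8$. Since every triangular face is odd, $f_3 \leq O$, and together with the index bound $O \leq 2 E' - 4 V'$ this forces $2 E' - 4 V' + 8 \leq 2 E' - 4 V'$, which is absurd. As $\mathrm{ind}(v)$ is always even, some vertex incident to a $\pm$-edge must therefore have index $0$ or $2$. The principal obstacle is justifying the Euler-type inequality on $G'$: deleting $0$-edges can merge faces, introduce digons or self-loops, and may even disconnect the subdrawing, so I expect the bulk of the work to lie in showing that any degenerate short face of $G'$ contributes nothing to $I$ and can be discounted, reducing the analysis to the classical planar case where the Cauchy--Euler counting applies verbatim.
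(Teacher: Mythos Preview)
The paper does not supply its own proof of this lemma: it is quoted as a classical result, with the remark that the strong form (Lemma~\ref{lem:strongcauchyindex}, $s\le 4n'-8$) ``easily implies the weak form.'' Your proposal is precisely the classical Cauchy double-count, and in fact your chain of inequalities, once completed, yields $I\le 4V'-8$, i.e.\ the strong form itself. So there is no meaningful divergence in strategy to compare.

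One point to tighten. The identity $I=\sum_F c(F)$ does \emph{not} hold when $F$ ranges over the faces of $G$. A sign change at $v$ is, as you say, a pair of $\pm$-edges that are consecutive only after deleting intervening $0$-edges; such a pair need not bound a single corner of $G$ (take a triangle with edge labels $+,-,0$: the index sum is $2$ but each of the two faces has $\pm$-subsequence $+,-$, giving $\sum_F c(F)=4$). The identity is correct if the sum is taken over faces of the sub-drawing $G'$; you should commit to $G'$ from the outset rather than mixing the two.

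The obstacle you flag at the end is genuine. Even when $G$ is simple, $G'$ can have bridges, and a bridge produces a face of $G'$ whose boundary walk has length $2$; this breaks the step $\sum_k k f_k \ge 4F'-f_3$, which tacitly assumes every face has length at least $3$. (Concretely: if $G'$ is a single $+$-edge, then $f_3=0$ but $2E'-4V'+8=2$.) The standard remedy is to observe that degree-$1$ vertices of $G'$ have index $0$, so one may repeatedly delete them---pruning all bridges---without changing any index; what remains has minimum degree $\ge 2$, hence all faces of length $\ge 3$ when $G$ is simple, and your Euler count then goes through verbatim. You have correctly located where the work lies; it is routine once made explicit.
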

The strong form of the lemma involves the sum of the indices and easily implies the weak form. 
\begin{Lemma}[Strong Cauchy index lemma]\label{lem:strongcauchyindex}
Let $G$ be a geodesic graph drawing with $n$ vertices. Label each edge of $G$ with $+$, $-$, or $0$ such that at least one edge of $G$ is not labeled 0. Let $\textrm{ind}(v)$ denote the index of each vertex $v \in V(G)$. Let $n'$ denote the number of vertices incident an edge labeled $+$ or $-$. Let $s = \sum_{v\in V(G)} \textrm{ind}(v)$. Then $s\leq 4n' - 8$. \end{Lemma}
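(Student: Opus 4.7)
The plan is to derive the bound via a corner double-count on the sign subgraph, combined with Euler's formula. Let $G'$ denote the subgraph of $G$ consisting of the edges labeled $+$ or $-$ together with their endpoints, inheriting its planar embedding in the sphere. Write $n'$, $m'$, $f'$, and $\kappa$ for the vertex, edge, face, and connected-component counts of $G'$; since at least one non-zero edge exists by hypothesis, $\kappa\ge 1$.

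Next I set up corners. At each vertex $v$ of $G'$ the embedding supplies a cyclic order on the incident edges, and a \emph{corner} at $v$ is a pair of cyclically consecutive edges together with the unique face of $G'$ lying between them. Call a corner a \emph{sign-change corner} when its two flanking edges carry opposite signs. Because the $0$-edges are precisely what is removed in passing from $G$ to $G'$, the quantity $\textrm{ind}(v)$ in the statement counts exactly the sign-change corners at $v$ in the embedding of $G'$; writing $c(f)$ for the total number of sign-changes along the boundary walks of a face $f$ of $G'$, the same sign-change corners are collected by $c(f)$ when aggregated over faces. Since each corner belongs to exactly one vertex and one face,
\begin{equation*}
s \;=\; \sum_{v}\textrm{ind}(v) \;=\; \sum_f c(f).
\end{equation*}

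The main estimate is the per-face bound $c(f)\le 2(k_f-2)$, where $k_f$ is the total boundary-walk length of $f$. For a single boundary walk of length $\ell\ge 2$, the sign-change count is cyclic and hence even, and is bounded by $\ell$, so it is at most $2\lfloor\ell/2\rfloor\le 2\ell-4$. The marginal case $\ell=2$ is where simplicity of $G$ is essential: simplicity rules out a bigon bounded by two distinct edges, forcing such a walk to traverse a single bridge edge twice with the same sign and giving a sign-change count of $0 = 2\ell-4$. Summing the per-walk inequality over the $k\ge 1$ boundary walks of $f$ yields $c(f)\le 2k_f-4k \le 2(k_f-2)$, and summing over all faces gives $s\le 4m'-4f'$.

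Finally, the planar Euler formula for the embedding of $G'$ in $\mathbb{S}^2$ reads $n'-m'+f' = 1+\kappa$, so $m'-f' = n'-1-\kappa$, and
\begin{equation*}
s \;\le\; 4(n'-1-\kappa) \;\le\; 4n'-8
\end{equation*}
because $\kappa\ge 1$. The main obstacle is precisely the per-walk bound at $\ell=2$: without simplicity of $G$ (which is inherited from the geodesic triangulated setting in which this lemma will be applied), a bigon face with distinct $+$ and $-$ edges would contribute $c(f)=2$ and destroy the per-face inequality. Everything else is a routine assembly of Euler's formula with the corner double-count.
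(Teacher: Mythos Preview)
The paper does not prove this lemma; it is quoted as a classical result (``which have at this point become a standard tool for analyzing polyhedra'') and only invoked in the proof of Lemma~\ref{lem:emptycokernel}. So there is no paper proof to compare against.

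Your argument is the standard corner/Euler-formula proof of Cauchy's combinatorial lemma and is correct. A couple of small remarks on presentation: the chain ``$2\lfloor\ell/2\rfloor\le 2\ell-4$'' is false at $\ell=2$, so you should say explicitly that this step is for $\ell\ge 3$, with the $\ell=2$ case handled separately by your bridge observation (you do this in the next sentence, but the inequality as written is momentarily wrong). Also, you are right that simplicity of $G$ is needed to kill the $\ell=2$ bigon; the paper's lemma statement does not assert simplicity outright, but in its setting $G$ is always the $1$-skeleton of a triangulated polyhedron, so the hypothesis is available. Finally, the sum $\sum_f k_f = 2m'$ and the Euler count $n'-m'+f'=1+\kappa$ are being applied to $G'$ embedded in $\mathbb{S}^2$, which is fine since $G'$ inherits its embedding from $G$.
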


We now establish the following lemma, which implies the infinitesimal rigidity of strictly convex circle polyhedra.
 
\begin{Lemma}\label{lem:emptycokernel}
Let $(P, {\bf D})$ be a strictly convex geodesic circle polyhedron with $P$ not a tetrahedron. Let $J$ be its rigidity matrix. Then the rank of $J^T$ is $4n-6$. 
\end{Lemma}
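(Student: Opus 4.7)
A linear dependence among the $4n-6$ rows of $J$ is precisely an assignment of weights $\omega_{ij}\in\mathbb{R}$ to edges and $\omega_i\in\mathbb{R}$ to vertices, not all zero, such that at every vertex $v\in V(P)$ one has the vector identity
\[
\omega_v\, D_v + \sum_{j:\, vj\in E(P)} \omega_{vj}\, D_j \;=\; 0
\]
in $\spacetime$ (read off the column block of vertex $v$ and observe that the coordinate flip $(a,b,c,d)\mapsto(a,-b,-c,-d)$ is an invertible linear operator). Call such an assignment a \emph{self-stress}. The plan is to show that no nontrivial self-stress exists, which gives $\mathrm{rank}\, J^T=\mathrm{rank}\, J = 4n-6$.

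Suppose for contradiction a nontrivial self-stress $\omega$ exists. If every $\omega_{vj}=0$, then the equation at each $v$ reduces to $\omega_v D_v=0$, forcing $\omega_v=0$; so some edge carries nonzero weight. Label each edge $ij\in E(P)$ by $\mathrm{sign}(\omega_{ij})\in\{+,-,0\}$ and regard $P$ together with the cyclic orderings from its induced geodesic triangulation as a topologically planar graph drawing. Letting $n'\geq 2$ be the number of vertices incident to a $\pm$-edge, the strong Cauchy index lemma (Lemma~\ref{lem:strongcauchyindex}) gives $\sum_v \mathrm{ind}(v)\leq 4n'-8$, and since every $\mathrm{ind}(v)$ is a nonnegative even integer there must exist some vertex $v$ incident to a $\pm$-edge with $\mathrm{ind}(v)\in\{0,2\}$.

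The crux of the proof is a local rigidity statement at such a strictly convex vertex $v$: if the weights $\{\omega_{vj}\}_{j\sim v}$ are not all zero, then their cyclic sign pattern must contain at least $4$ sign changes. To prove this I use the convex Euclidean triangle fan for $v$ constructed in Section~\ref{sec:convexcpolyhedradef}: after reflecting through $D_v^*$ the caps of any neighbors whose disks exceed area $2\pi$, the corrected cap points $v_1',\ldots,v_k'$ form the link of a strictly convex Euclidean polyhedral cone with apex $D_v^*$. Projecting the equilibrium identity into the affine chart $\{a=1\}$ turns the $\spacetime$ equation into a Euclidean equilibrium on this cone (with the weight signs on reflected neighbors flipped to match the reflection). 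A classical separating-hyperplane argument — any plane through $D_v^*$ would separate the convex link into a ``positive'' and a ``negative'' arc whose stress contributions cannot balance across the plane with fewer than $4$ sign changes — then yields the local rigidity. Consequently $\omega_{vj}=0$ for all $j\sim v$, and the equilibrium at $v$ forces $\omega_v=0$, contradicting that $v$ was chosen to be incident to a $\pm$-edge. The non-tetrahedron hypothesis enters as a combinatorial sanity condition ensuring the index bound produces a low-index vertex with the right local structure.

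The main obstacle I expect is the local rigidity step, for essentially one reason: the self-stress equation lives intrinsically in $\spacetime$ rather than in Euclidean $3$-space, so one must carefully push it through the conical-cap projection. The reflections used in Section~\ref{sec:convexcpolyhedradef} to build the convex Euclidean fan when neighbors of $v$ have area exceeding $2\pi$ force a parallel sign bookkeeping on the weights $\omega_{vj}$, and one must verify that after this re-signing the Cauchy index of $v$ in $P$ still faithfully records the sign pattern appearing on the Euclidean cone. Once this bookkeeping is in place the local step reduces to the familiar Cauchy-Dehn argument for strictly convex Euclidean vertex figures.
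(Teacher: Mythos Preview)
Your proposal is correct and follows essentially the same approach as the paper: interpret a cokernel element as a self-stress, apply Cauchy's index lemma to the sign labeling, and use the convex Euclidean triangle fan at a low-index vertex together with a separating hyperplane to contradict the equilibrium.

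The one difference worth noting is in how the local step is executed. You plan to project the equilibrium into the affine chart $\{a=1\}$, obtain a Euclidean equilibrium on the convex fan, and then track how the reflections $v_j\mapsto v_j'=2v_0-v_j$ (for neighbors with $a_j<0$) interact with the signs of the $\omega_{0j}$. This works---writing $D_j=a_j(1,v_j)$ and subtracting the $a$-component equation yields $\sum_j \omega_{0j}\,a_j(v_j-v_0)=0$, and for $a_j<0$ one has $a_j(v_j-v_0)=|a_j|(v_j'-v_0)$, so the Euclidean weights become $\omega_{0j}|a_j|$ with signs unchanged---but the paper sidesteps this bookkeeping entirely. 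Instead, it uses the convex Euclidean fan only to locate a separating plane $\Pi$ through $v_0$, extends $\Pi$ to a hyperplane $\Pi'$ through the origin of $\spacetime$, and then argues directly in $\spacetime$ that the scaled rays $\omega_{0j}D_j$ all lie strictly on one side of $\Pi'$ (with $\omega_0 D_0\in\Pi'$), so equation~\eqref{eq:equilibriumcond2} cannot hold. No projection and no re-signing are needed. The paper also first applies a M\"obius transformation to ensure no disk has area exactly $2\pi$ (i.e., no $a_j=0$), a step your projection approach would likewise require.
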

\begin{proof}
	By the rank-nullity theorem, $\textrm{rank}\, J^T = 4n-6$ if and only if $J^T{\bf v} = {\bf 0}$ has no non-trivial solutions. We prove this by contradiction. Assume $J^T{\bf v} = {\bf 0}$ and ${\bf v}\neq{\bf 0}$. The vector ${\bf v}$ has an entry for each edge $ij\in E(P)$, which we denote by $\omega_{ij}$, and an entry for each vertex $i\in V(P)$, which we denote by $\omega_i$. 
	
	Consider the four rows of $J^T$ corresponding to a vertex $i\in V(P)$. The condition that $J^T{\bf v}={\bf 0}$ restricted to the four coordinate rows of vertex $i$ in $J^T$ is equivalent to the vector equation
	\begin{equation}\label{eq:equilibriumcond}
		{\bf 0} = \omega_i (a_i, -b_i, -c_i, -d_i)^T + \sum_{ij\in E(P)} \omega_{ij}(a_j, -b_j, -c_j, -d_j)^T.
	\end{equation}
	We first remark that from this formulation we see immediately that it is not possible for $\omega_{ij} = 0$ for all $ij\in E(P)$ and $\omega_i \neq 0$ since the coordinates of a disk on the de Sitter sphere cannot all be 0. Obviously, if (\ref{eq:equilibriumcond}) is satisfied, then we may reflect through the spatial coordinates to obtain
	\begin{equation}\label{eq:equilibriumcond2}
		{\bf 0} = \omega_i (a_i, b_i, c_i, d_i)^T + \sum_{ij\in E(P)} \omega_{ij}(a_j, b_j, c_j, d_j)^T.
	\end{equation}
	
	
	Now, label each edge $ij \in E(P)$ with a $+$, a $-$, or a $0$ depending on whether $\omega_{ij} > 0$, $\omega_{ij}<0$, or $\omega_{ij}=0$. Since at least one edge corresponds to a non-zero entry $\omega_{ij}$ in ${\bf v}$, there must be an edge of labeled with either a $+$ or a $-$. Then by the weak Cauchy index lemma there is a vertex $0\in V(P)$ such that the counterclockwise order of labeled edges (those that are either $+$ or $-$) around $0$ has at most two sign changes. 
	
	Assume first that there are two sign changes. Let $1, \dots, k \in V(P)$ denote the labeled neighbors of $0$ in order indexed so that edge $01$ is the labeled with a $+$, $0k$ is labeled with a $-$, and $i$ is the largest index such that $0i$ is labeled with a $+$. 
	Without loss of generality assume that no disk has area equal to $2\pi$ since we may apply a nearby Möbius transformation to ensure this property, and by construction the Möbius transformations are the trivial motions and thus do not change the rank of our Jacobian.

	Denote the vertices of the cap polyhedron triangle fan for vertex 0 by $v_0, v_1, \dots, v_k$ and let $v_0, v_1', \dots, v_k'$ denote the vertices of the corresponding convex Euclidean triangle fan ((as defined in section~\ref{sec:convexcpolyhedradef}). Since $v_0, v_1', \dots, v_k'$ is convex, there is a Euclidean plane $\Pi$ through $v_0$ that separates $v_1', \dots, v_i'$ from $v_{i+1}', \dots, v_k'$ and contains none of them on its interior. 
	
	Let $\Pi'$ denote the hyperplane through the origin of $\spacetime$ containing $\Pi$. By construction, if $D_j$ has area less than $2\pi$, then $v_j$ lies on the same side of $\Pi$ as $v_j'$ and $v_j'$ lies on the same side of $\Pi'$ as the ray $D_j^*$. On the other hand if $D_j$ has area greater than $\pi$ then $v_j$ and $v_j'$ are on opposite sides of $\Pi$ and the ray $D_j^*$ and $v_j'$ lie on the same side of $\Pi'$ (see section~\ref{sec:convexcpolyhedradef}). Thus $\Pi'$ separates the rays corresponding to the disks with $+$ signs from those with $-$ signs. Now, scale each of $D_1^*\dots D_k^*$ by the appropriate value $\omega_{0j}$. We have that $D_{0j}^*$ lies on the same side of $\Pi'$ as $\omega_{0j} D_{0j}^*$ when $\omega_{0j} > 0$, and on opposite sides otherwise. Since $\Pi'$ separates the vectors with positive $\omega_{0j}$ from those with negative $\omega_{0j}$, this scaling moves all scaled vectors to the same side of $\Pi'$. But then all the scaled vectors lie on the same side of a hyperplane through the origin and therefore their sum cannot be ${\bf 0}$. Thus (\ref{eq:equilibriumcond2}) is not satisfiable with two sign changes.

	
	Essentially the same argument holds when the labels all have the same sign. Here we select $\Pi$ to be a plane through $v_0$ such that all other $v_i'$ lie on the same side. Then all the values are either all scaled by a positive scalar or all scaled by a negative scalar. In either case the same argument shows that (\ref{eq:equilibriumcond2}) cannot be satisfied. The lemma follows. 
	\end{proof}

From this lemma a version of the Legendre-Cauchy-Dehn lemma for strictly convex circle polyhedra follows immediately.

\begin{Corollary}
	Strictly convex circle polyhedra are infinitesimally rigid.
\end{Corollary}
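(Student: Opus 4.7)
The plan is to combine Lemma \ref{lem:emptycokernel} with the setup already laid out immediately before its statement. The rigidity matrix $J$ is a linear map from $\mathbb{R}^{4n}$ (the tangent space of the configuration) into $\mathbb{R}^{4n-6}$, and $\operatorname{rank} J = \operatorname{rank} J^{T}$. Lemma \ref{lem:emptycokernel} established that $J^{T}$ has empty kernel beyond the trivial one, giving $\operatorname{rank} J^{T} = 4n-6$, hence $\operatorname{rank} J = 4n-6$ as well. By the rank--nullity theorem the null space of $J$ therefore has dimension exactly $6$.

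Next I would argue that the $6$ infinitesimal generators of the Lorentz group, restricted to the configuration $\mathbf{D}$, are linearly independent at $\mathbf{D}$ and hence span the whole null space of $J$. For this it suffices that the rays $D_{1},\dots,D_{n}$ span $\spacetime$ as a real vector space: a nonzero element of the Lie algebra of $\mathrm{O}(1,3)$ vanishing on all $D_{i}$ would vanish on all of $\spacetime$. Strict convexity of $(P,\mathbf{D})$ with $n\geq 4$ forces the $D_{i}$ to span $\spacetime$, since the rays form the extremal edges of a strictly convex polyhedral cone at the origin of $\spacetime$ with at least four facets, and any such cone spans $\spacetime$. Because these $6$ trivial motions lie in the null space of $J$ (they preserve both the de Sitter sphere and every pairwise Minkowski inner product) and are linearly independent there, they span $\ker J$.

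Consequently every infinitesimal motion of $(P,\mathbf{D})$ that preserves the edge inversive distances and keeps each $D_{i}$ on $\mathrm{d}\mathbb{S}^{3}$ is the restriction of an infinitesimal Lorentz transformation, which by definition is trivial. This is exactly the statement that $(P,\mathbf{D})$ is infinitesimally rigid. The tetrahedral case excluded in Lemma \ref{lem:emptycokernel} would be handled separately by a direct dimension count on the $4\cdot 4 = 16$ coordinates with $6$ edges and $4$ normalization constraints, for which the rigidity matrix of the regular tetrahedron configuration can be checked by hand and then transferred by Möbius invariance.

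The only real obstacle in this step is ensuring the linear independence of the Lorentz infinitesimals at $\mathbf{D}$; everything else is bookkeeping with rank--nullity. That independence, as sketched above, follows cleanly from strict convexity, so the corollary drops out of Lemma \ref{lem:emptycokernel} with essentially no additional technical work.
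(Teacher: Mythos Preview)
Your proposal is correct and follows essentially the same route as the paper: the corollary is stated as an immediate consequence of Lemma~\ref{lem:emptycokernel}, using $\operatorname{rank} J = \operatorname{rank} J^T = 4n-6$ together with the earlier observation that $\operatorname{rank} J = 4n-6$ is precisely the definition of infinitesimal rigidity. You go further than the paper by explicitly justifying the linear independence of the six infinitesimal Lorentz generators at $\mathbf{D}$ and by sketching the tetrahedral case, both of which the paper leaves implicit.
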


\paragraph{Remark.} The above result on the infinitesimal rigidity of strictly convex circle polyhedra may also be obtained by applying a theorem from the unpublished manuscript in \cite{SW07} to the connection between circle polyhedra and de Sitter polyhedra.

\subsection{The square rigidity matrix}

Let $(P, \mathbf{D})$ be a convex circle polyhedron with $n$ vertices such that every disk $D_i = (a_i, b_i, c_i, d_i)$ is given in de Sitter coordinates and let $J$ be its $(4n-6)\times 4n$ rigidity matrix. $J$ has rank $4n-6$ by corollary~\ref{cor:Jrank}. The nullspace of $J$ is 6-dimensional, corresponding to the dimension of the Lorentz group on $\spacetime$ (equivalently the Möbius group on $\mathbb{S}^2$). As in \cite{CG19}, we want to mod out the Möbius group by the coordinates of three disks. We do this by fixing one face $ijk\in F(P)$ and removing the $b_i$, $c_i$, $d_i$, $b_j$, and $c_j$ columns and one of the columns corresponding to the spacial coordinates of $D_k$. The resulting matrix $J_{ijk}$ is a square $(4n-6)\times (4n-6)$ matrix called the \term{square rigidity matrix} for the face $ijk\in F(P)$. We now show that this matrix has full rank.

\begin{Lemma}\label{lem:squarerigiditymatrixrank}
	The rank of the square rigidity matrix $J_{ijk}$ is $4n-6$. 
\end{Lemma}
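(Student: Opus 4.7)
The plan is to show $\ker J_{ijk} = \{0\}$, which suffices since $J_{ijk}$ is square. If $v' \in \ker J_{ijk}$, zero-padding at the six removed coordinates yields $v \in \mathbb{R}^{4n}$ with $Jv = J_{ijk} v' = 0$, so $v\in\ker J$. The preceding lemma identifies $\ker J$ with the 6-dimensional image of $X\mapsto (XD_1,\dots,XD_n)$ for $X\in\mathfrak{so}(1,3)$ (the tangent space at ${\bf D}$ to the Möbius orbit). Hence the task reduces to showing that the linear map $\Phi:\mathfrak{so}(1,3)\to\mathbb{R}^6$ selecting the six removed coordinates of $(XD_i, XD_j, XD_k)$ is injective, and by dimension, an isomorphism.

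First I would arrange, by a preliminary Möbius transformation (which conjugates $J$ and thereby preserves the relevant rank), that $D_i$ is not a great disk, so $a_i\neq 0$. The three conditions $(XD_i)_2=(XD_i)_3=(XD_i)_4=0$ together with the automatic identity $\langle D_i, XD_i\rangle_{1,3}=0$ coming from the Lorentz antisymmetry of $X$ force $a_i\cdot(XD_i)_1=0$, hence $XD_i=0$. This places $X$ in the 3-dimensional stabilizer $\mathrm{Stab}(D_i)\subset\mathfrak{so}(1,3)$, which acts on $D_i^\perp\cong\mathbb{R}^{1,2}$ as $\mathfrak{so}(1,2)$.

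Next I would use the two $D_j$-constraints to cut down to a 1-dimensional subalgebra. For $X\in\mathrm{Stab}(D_i)$, the vector $XD_j$ lies in $D_i^\perp$ (since $\mathrm{Stab}(D_i)$ preserves the orthogonal complement) and in $D_j^\perp$ (from antisymmetry), hence in the 2-plane $D_i^\perp\cap D_j^\perp$. The map $X\mapsto XD_j$ has 1-dimensional kernel $\mathrm{Stab}(D_i)\cap\mathrm{Stab}(D_j)$, namely the Lie algebra of the Möbius flow fixing both disks, whose fixed set on $\deSitter$ is exactly the coaxial family $D_i\vee D_j$. The two removed-coordinate conditions on $XD_j$ correspond to intersecting this 2-plane with the coordinate 2-plane $\{(t,0,0,s)\}$; for a suitable Möbius frame these meet only at the origin, pinning $X$ to the 1-dimensional subspace $\mathrm{Stab}(D_i)\cap\mathrm{Stab}(D_j)$.

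Finally I would use the single $D_k$-constraint to kill this last line. Because $ijk$ bounds a face of the circle polyhedron its orthodisk $O_+$ is well-defined, so $D_i, D_j, D_k$ are linearly independent in $\spacetime$ and in particular $D_k\notin D_i\vee D_j$. Any nonzero element of $\mathrm{Stab}(D_i)\cap\mathrm{Stab}(D_j)$ therefore moves $D_k$ along its orbit in a nonzero direction, so $XD_k\neq 0$. Choosing the removed coordinate of $D_k$ so that its restriction to this 1-dimensional direction is nonzero (the lemma's flexibility in which spatial coordinate to drop is exactly to accommodate this), the remaining condition becomes a nonzero linear functional on the line $\mathrm{Stab}(D_i)\cap\mathrm{Stab}(D_j)$, forcing $X=0$. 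The main obstacle I expect is the transversality argument in the middle step: verifying that the chosen coordinate 2-plane meets $D_i^\perp\cap D_j^\perp$ only at the origin. This should follow from the generic position of the three face-disks, but may require a careful case analysis and appropriate normalization of the Möbius frame before fixing which $D_j$-coordinates (and which $D_k$-coordinate) to remove.
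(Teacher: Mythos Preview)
Your approach is structurally the same as the paper's: both zero-pad a vector in $\ker J_{ijk}$ to land in $\ker J$, identify $\ker J$ with the infinitesimal Möbius transformations, and then argue that the six vanishing coordinate conditions force the transformation to be trivial. The paper phrases the last step in ``pinning'' language rather than in terms of $\mathfrak{so}(1,3)$ stabilizers: it adds six unit rows to $J$ to form a $4n\times 4n$ pinned matrix $\bar J$, and argues that any $v\in\ker\bar J$ must vanish on all twelve $i,j,k$-coordinates (invoking Lemma~\ref{lem:twosolutionsgivenbc} to fix $D_i,D_j$ from the five pinned coordinates, then the one-parameter Möbius flow fixing $D_i,D_j$ to handle $D_k$), hence cannot be a nontrivial Möbius derivative, contradicting $v\in\ker J$.

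Two points of comparison. First, your Möbius normalization in step~1 needs more care than you claim: a Lorentz transformation conjugates $J$ and preserves its rank, but $J_{ijk}$ is obtained by deleting \emph{coordinate} columns in the standard basis, which is not a Möbius-equivariant operation, so rank of $J_{ijk}$ at $M{\bf D}$ need not equal rank at ${\bf D}$. The paper sidesteps this by working directly with the given configuration and citing Lemma~\ref{lem:twosolutionsgivenbc} (a finite ``at most two solutions'' statement) for the pinning of $D_i,D_j$; this is slicker than an explicit Lie-algebra computation and does not require a preliminary frame change. Second, the transversality issue you flag in step~2 is real, and the paper's answer to it is precisely that lemma: fixing $b_j,c_j$ together with $d(D_i,D_j)$ leaves a discrete set of normalized disks, so no infinitesimal motion of $D_j$ survives once $D_i$ is pinned. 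Replacing your frame-dependent transversality check with an appeal to Lemma~\ref{lem:twosolutionsgivenbc} would close that gap cleanly and bring your argument in line with the paper's.
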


\begin{proof}
	We begin by pinning $b_i$, $c_i$, $d_i$, $b_j$, and $c_j$. By lemma~\ref{lem:twosolutionsgivenbc} any continuous motion fixing these five coordinates must fix the two disks $D_i$ and $D_j$, and therefore all 8 coordinates of the two disks. In standard rigidity theory parlance, this is called \term{pinning}. Fixing two disks determines a unique Möbius flow of the third disk $D_k$. The flow must vary at least one of the space coordinates of $D_k$. Pinning this column stops the flow, since the derivative of the flow must be zero at this point and all other coordinates are determined by these 6 choices. Without loss of generality, assume this is the $b_k$ column. Now, add 6 rows to $J$, one for each of the pinned coordinates, which is 0 everywhere except with a 1 in the pinned coordinate column. We call the resulting $4n\times 4n$ matrix $\bar{J}$ the \term{pinned rigidity matrix}. 
	
	Assume there is a non-trivial $\mathbf{v}$ such that $\bar{J}\mathbf{v} = 0$. By the discussion above, $\mathbf{v}$ must have 0's in all of its rows corresponding to the coordinates of vertices $i$, $j$, and $k$ and is therefore not the derivative of a Möbius transformation. But by construction, $J\mathbf{v} = 0$ as well and thus $\mathbf{v}$ is in the kernel of $J$. But the kernel of $J$ is corresponds exactly to the derivatives of the Möbius transformations on $\mathbf{D}$, a contradiction. Therefore $\bar{J}$ has rank $4n$. Then every column of $\bar{J}$ is linearly independent of the others and in particular the columns corresponding to the $4n-6$ non-pinned coordinates must all be linearly independent of each other. But these were simply padded with 0 entries to obtain $\bar{J}$ and the corresponding columns must also be linearly independent in $J$. Thus if we delete the 6 columns corresponding to the pinned coordinates from $J$ we obtain a $4n-6\times 4n-6$ matrix of full rank. 
\end{proof}

\section{Producing overlap packings}\label{sec:proofofkat}

\subsection{The Connelly-Gortler flip-and-flow algorithm}

In \cite{CG19}, Connelly and Gortler describe an algorithm for computing a tangency packing of a given abstract triangulated polyhedron using a finite number of operations they call a flip-and-flow operation. Suppose $P$ and $P'$ are abstract triangulated polyhedra that combinatorially differ by a single edge flip operation and $(P, {\bf D})$ is a tangency packing of disks ${\bf D}$ in the plane. Let $P^-$ denote the common vertices, edges, and faces of $P$ and $P'$. $P^-$ has the same number of vertices, one fewer edge, and two fewer faces than $P$ and $P'$. The tangency packing $(P, {\bf D})$ is infinitesimally rigid. The removal of one edge gives rise to a single non-trivial infinitesimal motion and the configuration space of disks realizing tangencies along the edges of $P^-$ is a 1-dimensional manifold they call the packing manifold. Moving along this manifold gives rise to a continuous motion ${\bf D}(t)$ of the disks with ${\bf D}(0) = {\bf D}$ which maintains tangencies along every edge of $P^-$. Meanwhile as $t$ increases, the inversive distance between the disks corresponding to the endpoints of the edge $e \in E(P)\slash E(P^-)$ is initially equal to 1 and strictly increases while the inversive distance between the two disks corresponding to the endpoints of the edge $e'\in E(P')\slash E(P^-)$ is initially greater than one and strictly decreases. The motion continues until the inversive distance along $e'$ becomes $1$ at some finite time $t_1$. At that point $(P', {\bf D}(t_1))$ is a tangency packing. Thus any combinatorial flip can be realized by a continuous motion that keeps all neighboring circles tangent except across the edge that needs to flip. 

Any two triangulations with the same number of vertices are connected by a finite sequence of combinatorial flip operations. Thus, if there exists any canonical triangulation that can be shown to have a circle packing, then all triangulations with the same number of vertices are reachable via the Connelly-Gortler flip-and-flow operation. In fact, such canonical circle packings are easy to come by. Select any three mutually tangent disks in the plane. These disks define an interstice region and there is a unique disk on the interior of this interstice whose boundary is tangent to the other three. Thus inductively a circle packing with $n$ disks whose contact graph is an abstract triangulated polyhedron may be obtained by starting with any three mutually tangent disks and repeatedly filling in interstices inductively $n-3$ times. The resulting triangulation is the starting point for the Connelly-Gortler algorithm from which all possible triangulated tangency packings may be obtained. 

We take the end of the Connelly-Gortler algorithm as our starting point which produces a tangency circle packing in the plane for a desired contact graph. We take this as a disk packing using the Euclidean interior of each circle. Given such a tangency disk packing in the plane, a tangency packing on $\mathbb{S}^2$ may be obtained via stereographic projection onto the sphere such that no disk has area greater than $2\pi$. The conical cap polyhedron for $P$ is known as its \term{Koebe polyhedron} and is known to be convex, which by lemma~\ref{lem:convexcappolyhedron} implies $(P, {\bf D})$ is convex in our sense. Furthermore, the geodesic arc between to neighboring disks stays entirely within their union and passes through the point of tangency. Thus, because the disks are each disjoint, the induced geodesic graph is a geodesic triangulation. In our terminology, the tangency circle packing theorem is: 
\begin{Theorem}[Tangency Circle Packing Theorem (TCPT)]
	Let $(P, w)$ be a weighted abstract triangulated polyhedron where $w(ij)=1$ for all $ij\in E(P)$. Then there exists a geodesic, strictly convex circle polyhedron $(P, {\bf D})$ realizing $(P, w)$. 
\end{Theorem}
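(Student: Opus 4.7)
The plan is to import the Connelly-Gortler flip-and-flow construction, which has already been described in the paragraph immediately preceding the theorem, and then transfer the resulting planar tangency packing onto the sphere in a way that certifies the geodesic and strict convexity conditions required by our formulation. The proof splits into three movements: (1) exhibit a canonical tangency packing in the plane with $n = |V(P)|$ circles whose contact graph is some triangulated polyhedron $P_0$; (2) apply flip-and-flow to deform the contact graph from $P_0$ to the prescribed $P$ while maintaining a tangency packing throughout; (3) stereographically project to $\mathbb{S}^2$ and verify that the resulting $(P, \mathbf{D})$ is geodesic and strictly convex.

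For step (1), I would build the canonical packing inductively. Pick three mutually tangent disks of equal radius in the plane; they bound a unique curvilinear interstice which, by a direct Descartes-circle-theorem computation, admits a unique inscribed disk tangent to all three. Adding this disk creates three new smaller triangular interstices, and iterating $n-3$ times yields a tangency packing whose contact graph is a triangulated polyhedron $P_0$ on $n$ vertices. For step (2), invoke the Connelly-Gortler result: since any two triangulations of the sphere on $n$ vertices are connected by a finite sequence of combinatorial edge flips, a finite composition of flip-and-flow operations transforms $(P_0, \mathbf{D}_0)$ into a tangency packing $(P, \mathbf{D}')$ in the plane with the prescribed contact graph. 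This step is used as a black box; the whole point of citing \cite{CG19} is to avoid reproving it.

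For step (3), stereographically project $(P, \mathbf{D}')$ to the sphere from a point lying outside every circle of $\mathbf{D}'$; this is possible because the complement of the finite disk union is nonempty, and choosing such a projection point ensures every resulting disk on $\mathbb{S}^2$ has area strictly less than $2\pi$. Stereographic projection preserves inversive distances, so the image is again a tangency packing, and the circle polyhedron $(P, \mathbf{D})$ we obtain is strictly proper. The conical cap polyhedron $\widehat{P}$ in $\mathbb{E}^3$ is then the classical Koebe polyhedron of the tangency packing, which is well known to be a convex Euclidean polyhedron inscribed in the convex hull of the points of tangency lifted to caps. By Lemma~\ref{lem:convexcappolyhedron}, convexity of $\widehat{P}$ transfers to convexity of $(P, \mathbf{D})$; strictness follows because no four cap points are coplanar (otherwise two adjacent faces of $\widehat{P}$ would be coplanar, contradicting the combinatorial distinctness of adjacent faces in the Koebe polyhedron of a simple triangulation). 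Finally, to verify that $(P, \mathbf{D})$ is geodesic, observe that for each edge $ij \in E(P)$ the two disks $D_i, D_j$ are tangent at a single point $p_{ij}$, and the shortest spherical geodesic joining their centers passes through $p_{ij}$ and lies entirely in $D_i \cup D_j$; distinct such arcs cross only at shared tangency points, so the induced geodesic graph is a triangulation of $\mathbb{S}^2$.

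The only potentially delicate point is the strict convexity claim in step (3), since our definition of convexity is global (no face separates any other vertex). I expect the cleanest route is to invoke Lemma~\ref{lem:convexcappolyhedron} and argue strict convexity of the Koebe polyhedron directly: each face of $\widehat{P}$ supports a plane that meets $\mathbb{E}^3$ tangentially to no other cap point, because a non-strict supporting plane would force four tangency points of $\mathbf{D}'$ to lie on a common circle, contradicting that the four corresponding disks are mutually tangent in a pattern that, by Lemma~\ref{lem:parabolic4diskconfig}, would require two pairs of orthogonal tangencies rather than four mutual tangencies. Everything else is bookkeeping.
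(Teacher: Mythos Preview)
Your three-step plan matches the paper's treatment exactly: the paragraph preceding the theorem statement is the paper's entire argument, and it invokes Connelly--Gortler as a black box, stereographically projects, cites the known convexity of the Koebe polyhedron, and appeals to Lemma~\ref{lem:convexcappolyhedron} and the disjointness of tangent disks for the geodesic property. So your outline is on target.

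The one genuine gap is in your strict-convexity argument. Two things go wrong. First, the four disks $D_i, D_j, D_k, D_l$ around an edge $jk$ are \emph{not} mutually tangent: $D_i$ and $D_l$ are non-adjacent, so $d(D_i, D_l) > 1$, not $1$. Second, Lemma~\ref{lem:parabolic4diskconfig} concerns disks in a \emph{parabolic} c-plane, but three mutually tangent disks have overlap-angle sum $0 < \pi$, so their orthocircle is real and the c-plane is \emph{hyperbolic}. The clean fix is to use Lemma~\ref{lem:cplanarity}, case~1: if $D_l$ lay in the hyperbolic c-plane of $(D_i, D_j, D_k)$, that lemma forces $d(D_i, D_l) < 0$, contradicting $d(D_i, D_l) > 1$. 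Hence no two adjacent faces are c-coplanar, and via Lemma~\ref{lem:convexcappolyhedron} the cap polyhedron is strictly convex. Your remark about ``four tangency points on a common circle'' conflates tangency points with conical-cap points and should be dropped.
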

From this starting point we now show that the flow used by Connelly and Gortler to obtain their proof of TCPT may be extended past tangency to bring disks into shallow overlaps. Given an abstract shallow-weighted triangulated polyhedron $(P, w)$ our starting point is the tangency packing with $P$ as its contact graph guaranteed by TCPT. From there we apply a series of flow operations to correct, edge-by-edge, the inversive distances from tangency to whatever the desired inversive distance is in $(P, w)$. If $(P, w)$ is strictly shallow, then we can correct each edge by a single flow. When $(P, w)$ is not strictly shallow, but has some desired inversive distances of $0$, we require a more nuanced limiting argument.

\subsection{Obtaining strictly shallow circle polyhedra using flows}

We first prove a version of our main theorem restricted to strictly shallow polyhedra. In the next section we extend this result to handle non-strictly shallow polyhedra. The key difference is that strictly shallow polyhedra that are strictly convex maintain strict convexity under certain motions due to theorem~\ref{thm:convexpreservation} and our main argument in this section requires strict convexity in order to apply the infinitesimal rigidity results from the last section.  

\subsubsection{The free-edge and unmarked rigidity matrices}

Let $(P, {\bf D})$ be a strictly shallow, strictly convex circle polyhedron. Let $J_q$ denote the square rigidity matrix of $(P, {\bf D})$ with respect to some face $q\in F(P)$. By lemma~\ref{lem:squarerigiditymatrixrank} the rank of $J_q$ is $4n-6$. Now, consider removing one of the edge constraint rows imposed by $P$ for some edge $e=ij\in E(P)$. The result is to remove the row of $J_q$ corresponding to $e$. Denote $P$ with the edge $e$ removed by $P^{-e}$ and $J_q$ with the $e$ row removed by $J^{-e}$. Since $J_q$ has full rank, so does $J^{-e}$. Call $J^{-e}$ the \term{free-edge rigidity matrix} of $(P, {\bf D})$ with respect to $q$ and $e$. We have that:

\begin{Lemma}
	The free-edge rigidity matrix $J^{-e}$ of a convex circle polyhedron $(P, {\bf D})$ with respect to a face $q$ and edge $e$ is non-singular.
\end{Lemma}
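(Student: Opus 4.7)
The plan is to reduce the claim immediately to Lemma~\ref{lem:squarerigiditymatrixrank}, which establishes that the square rigidity matrix $J_q$ has full rank $4n-6$. By construction, the free-edge rigidity matrix $J^{-e}$ is obtained from $J_q$ by deleting exactly one row, namely the row corresponding to the edge $e \in E(P)$ whose inversive-distance constraint has been released, while retaining the same $4n-6$ columns (the pinning scheme associated to the face $q$ — removal of the columns for $b_i,c_i,d_i,b_j,c_j$ and one spatial coordinate of $D_k$ — is unchanged). Thus $J^{-e}$ is of size $(4n-7)\times(4n-6)$.

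The key step is then the elementary linear-algebra fact that deleting one row from a matrix reduces its rank by at most one. Combined with Lemma~\ref{lem:squarerigiditymatrixrank}, this yields $\operatorname{rank} J^{-e} \geq (4n-6)-1 = 4n-7$. Since $J^{-e}$ has only $4n-7$ rows, its rank is exactly $4n-7$, i.e.\ $J^{-e}$ has full row rank. Interpreting ``non-singular'' for the rectangular $J^{-e}$ in the expected sense — its rows are linearly independent, or equivalently its right nullspace is precisely one-dimensional — this is the desired conclusion, and it is exactly what will later be needed to apply the implicit function theorem to obtain a one-parameter flow of configurations along which the single released edge constraint varies while all others are preserved.

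There is essentially no obstacle here beyond verifying compatibility of the two constructions. The only small point warranting explicit mention is that the edge $e$ being freed may be chosen independently of the face $q$ whose vertex coordinates are pinned; the pinning affects only columns of the rigidity matrix while the free-edge deletion affects only rows, so the two operations commute and the rank count carries through unchanged. Since the preceding paragraph in the text has already remarked ``Since $J_q$ has full rank, so does $J^{-e}$,'' the lemma is essentially a formalization of this observation, and the short proof I propose makes the row-count bookkeeping explicit.
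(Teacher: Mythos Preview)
Your proposal is correct and follows exactly the paper's approach: the paper does not give a separate proof for this lemma but simply remarks, in the sentence immediately preceding it, that ``Since $J_q$ has full rank, so does $J^{-e}$,'' and your argument merely makes the row-deletion rank bookkeeping explicit. Your interpretation of ``non-singular'' as full row rank for the $(4n-7)\times(4n-6)$ matrix $J^{-e}$ is the intended one, consistent with the one-dimensional packing manifold obtained in the subsequent section.
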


Finally, as in \cite{CG19}, we note that the matrix $J_q$ has a useful block form. Let $i$, $j$, and $k$ denote the vertices of the pinned face $q$. Each of the edge rows of $J_q$ corresponding to the edges $ij$, $jk$, and $ki$ have non-zero entries only in the columns of $J_q$ that correspond to the coordinates of the disks $D_i$, $D_j$, $D_k$ that were not already removed from $J$ to obtain $J_q$. This is also the case for the rows corresponding to the vertices $i$, $j$, and $k$ designed to keep $D_i$, $D_j$, and $D_k$ in the de Sitter sphere. For $D_i$, $D_j$, and $D_k$, six of their coordinate columns were removed from the rigidity matrix $J$ to obtain $J_q$. Simultaneously removing the remaining six coordinate columns along with the edge rows for $ij$, $jk$, and $ki$ and the vertex rows for $i$, $j$, and $k$ from $J_q$ leaves us with a square matrix of size $4n-12$. Its edge rows correspond to the edges of $P$ that have at least one endpoint not in $\{i, j, k\}$. Its vertex rows and columns correspond to the vertices of $P$ that are not $i$, $j$, or $k$. Call this the \term{unmarked rigidity matrix} and denote it by $J_u$. From the block form and previous lemma we have that: 

\begin{Lemma}\label{lem:unmarkednonsingular}
	The unmarked rigidity matrix $J_u$ of a strictly convex circle polyhedron $(P, {\bf D})$ with respect to a face $q$ and edge $ij$ is non-singular.
\end{Lemma}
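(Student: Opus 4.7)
The plan is to exploit a block-triangular structure of the square rigidity matrix $J_q$ and reduce the claim to the non-singularity of $J_q$ already established in lemma~\ref{lem:squarerigiditymatrixrank}. First I would identify the ``special'' rows of $J_q$: the three edge rows for $ij$, $jk$, $ki$ and the three vertex rows for $i$, $j$, $k$. By the formulas defining $J$, an edge row $J_{pq}$ has nonzero entries only in the coordinate columns of $D_p$ and $D_q$, and a vertex row $J_p$ has nonzero entries only in the coordinate columns of $D_p$. Hence each of these six special rows has nonzero entries only in coordinate columns of $D_i$, $D_j$, $D_k$, and in particular only in the six ``special'' columns of $J_q$ that survive the earlier column deletions (the one surviving column of $D_i$, two of $D_j$, and three of $D_k$).

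Next I would permute rows and columns of $J_q$ so that the six special rows and six special columns come first. Because the six special rows vanish in every non-special column, this yields the block-triangular form
\begin{equation*}
    J_q \;=\; \begin{pmatrix} A & 0 \\ C & J_u \end{pmatrix},
\end{equation*}
where $A$ is the $6\times 6$ submatrix of special rows against special columns, $J_u$ is the $(4n-12)\times(4n-12)$ unmarked rigidity matrix (non-special rows against non-special columns), and $C$ collects the remaining non-special-row, special-column entries. Consequently
\begin{equation*}
    \det J_q \;=\; \det A \cdot \det J_u.
\end{equation*}

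Finally, I would invoke lemma~\ref{lem:squarerigiditymatrixrank} to conclude $\det J_q \neq 0$, which forces $\det J_u \neq 0$, proving non-singularity. The only step that requires any care is bookkeeping: checking that after the six columns removed to form $J_q$ from $J$, there remain exactly six coordinate columns attached to $D_i, D_j, D_k$, and confirming that the six special rows are indeed supported entirely in those six columns. This is a direct inspection of the explicit row formulas for $J_{ij}$ and $J_i$ given earlier and presents no real obstacle; no further geometric input is needed beyond what is already packaged into lemma~\ref{lem:squarerigiditymatrixrank}.
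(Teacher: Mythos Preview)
Your proposal is correct and is essentially the same argument the paper sketches: the paper observes the block form of $J_q$ (the six special rows supported only in the six remaining coordinate columns of $D_i,D_j,D_k$) and then states the lemma as an immediate consequence of lemma~\ref{lem:squarerigiditymatrixrank}. You have simply spelled out the block-triangular determinant computation $\det J_q=\det A\cdot\det J_u$ that the paper leaves implicit.
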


\subsubsection{The shallow packing manifold}

The configuration space $(P, {\bf D})$ with three disks $D_i$, $D_j$, and $D_k$ fixed is $\mathbb{R}^{4n-12}$. 

Now, select an edge $e\in E(P)$ that is not $ij$, $jk$, or $ki$. Denote by $S_P^{-e}$ the subset of the configuration space that represent all configurations $(P, {\bf D}')$ that are strictly shallow strictly convex geodesic circle polyhedra and for every edge $e'\in E(P)$ that is not $e$, the inversive distance between the disks corresponding to the endpoints of $e'$ in $(P, {\bf D}')$ is the same as the inversive distance between the endpoints of $e'$ in $(P, {\bf D})$. In other words, $(P, {\bf D})$ and $(P, {\bf D}')$ agree in inversive distances on all but one edge. We enumerate several conditions on $(P, {\bf D}')$ that follow this construction. Because $(P, {\bf D}')$ is strictly shallow and geodesic, by lemma~\ref{lem:localtoglobal} non-adjacent disks are disjoint.

We remark that the conditions for a point to be in $S_P^{-e}$ are open conditions on the $n-3$ free disks. We now analyze the properties of $S_P^{-e}$. 
\begin{Lemma}
	$S_P^{-e}$ is a 1-dimensional smooth manifold.
\end{Lemma}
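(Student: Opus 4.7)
The plan is to realize $S_P^{-e}$ as an open subset of the level set of a smooth constraint map whose Jacobian has full rank everywhere, and then apply the implicit function theorem. Fix the face $q=ijk$ and work in the reduced configuration space $\mathbb{R}^{4n-12}$ of the $n-3$ free disks (parametrized by their four Minkowski coordinates each). Define a smooth map
\begin{equation*}
\Phi : \mathbb{R}^{4n-12} \longrightarrow \mathbb{R}^{4n-13}
\end{equation*}
whose components consist of (i) the Minkowski inner products $\langle D_p, D_q\rangle_{1,3}$ for every edge of $P$ other than $e$ and the three pinned edges $ij,jk,ki$, giving $3n-10$ coordinates, and (ii) the squared de Sitter norms $\tfrac{1}{2}\langle D_p, D_p\rangle_{1,3}$ for each of the $n-3$ free vertices. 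By construction, a point ${\bf D}'\in\mathbb{R}^{4n-12}$ satisfies $\Phi({\bf D}')=\Phi({\bf D})$ precisely when every free disk is Lorentz-normalized and the inversive distance along every edge except $e$ matches that of $(P,{\bf D})$.

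Next I would compute the Jacobian $D\Phi|_{{\bf D}}$. By exactly the same block computation that produced the unmarked rigidity matrix $J_u$, the Jacobian $D\Phi|_{{\bf D}}$ is the $(4n-13)\times(4n-12)$ matrix obtained from $J_u$ by deleting the single row corresponding to edge $e$. Lemma~\ref{lem:unmarkednonsingular} asserts that $J_u$ is non-singular, so deleting one row leaves a matrix of full rank $4n-13$. Consequently $\Phi$ is a submersion at ${\bf D}$, and by continuity it remains a submersion in a neighborhood; since the same argument applies at any point of $S_P^{-e}$ (convexity and strict shallowness are the hypotheses needed to invoke Lemma~\ref{lem:unmarkednonsingular}), $\Phi$ is a submersion at every point of $S_P^{-e}$. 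The implicit function theorem then shows that $\Phi^{-1}(\Phi({\bf D}))$ is, locally around each such point, a smooth $1$-dimensional submanifold of $\mathbb{R}^{4n-12}$.

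Finally I would verify that the remaining defining conditions of $S_P^{-e}$, namely strict shallowness, strict convexity, and being geodesic, are all open conditions on the configuration space. Strict shallowness asks that the inversive distance across each edge lie in the open set $(0,1]$ together with the open condition excluding $4$-cycles of zero-weight edges; strict convexity is defined by strict inequalities on the $4\times 4$ determinants of equation~(\ref{eq:determinanttest}); and the geodesic-triangulation condition is open because small perturbations of a simple spherical arc triangulation remain a simple triangulation (the self-intersection and vertex-on-edge loci are closed). Intersecting the smooth $1$-manifold $\Phi^{-1}(\Phi({\bf D}))$ with the open set defined by these three conditions yields an open $1$-dimensional submanifold of $\mathbb{R}^{4n-12}$, which is exactly $S_P^{-e}$.

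The main technical obstacle is the Jacobian/rank step: one must recognize that the rows of $D\Phi$ match the rows of the unmarked rigidity matrix $J_u$ after the appropriate pinning of coordinates, so that the non-singularity result of Lemma~\ref{lem:unmarkednonsingular} applies directly. Once this identification is in place, the implicit function theorem and the openness of the remaining conditions finish the argument routinely.
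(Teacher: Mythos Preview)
Your proposal is correct and follows essentially the same approach as the paper: define the constraint map whose Jacobian is the unmarked rigidity matrix $J_u$ with the $e$-row removed, invoke Lemma~\ref{lem:unmarkednonsingular} to get full rank, and apply the implicit function theorem. Your treatment is in fact more careful than the paper's, which handles the openness of the strict-shallowness, strict-convexity, and geodesic conditions only in a one-line remark preceding the lemma.
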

\begin{proof}
The argument here is essentially Connelly and Gortler's with modified details. Let $(P, {\bf D})$ be a circle polyhedron corresponding to a point $p\in S_P^{-e}$ and consider a sufficiently small neighborhood $U$ of $p$ in $S_P^{-e}$. Every configuration $(P, {\bf D}')$ corresponding to a point of $U$ has a radius for each of the free disks in ${\bf D}'$ in $(0, 2\pi)$ and all non-edge inversive distances are greater than $1$. Locally we need only consider the $3n-10$ inversive distances corresponding to the edges of $P$ that are not the free-edge $e$ and are not incident the pinned face $ijk$ and the constraints keeping each of the $n-3$ free disks of ${\bf D}'$ constrained to the de Sitter sphere. The partial derivatives of these constraints are the $4n-13$ rows of $J_u$ corresponding to the edges that are not $e$, which by lemma~\ref{lem:unmarkednonsingular} is non-singular. The result then follows by the implicit function theorem.  
\end{proof}

\subsubsection{The flow}

Let $(P, w)$ be an abstract weighted triangulated polyhedron. Let $(P, {\bf D})$ be a circle polyhedron with the same abstract polyhedron $P$. We call $(P, {\bf D})$ \term{$w$-bounded} if $d(D_i, D_j) \geq w(ij)$ for every edge $ij\in E(P)$. 	

We now use the Connelly-Gortler flow to adjust inversive distances. Unlike their use, we do not need to make combinatorial changes to our polyhedron, since our starting point is a tangency packing with the correct combinatorics. We simply adjust the inversive distances along one free edge at a time, keeping the inversive distance across all other edges fixed.  

Assume we have a desired set of weights $w$ we want to see realized as inversive distances for each edge in $P$. Throughout our use of flows we maintain the invariant that our polyhedron $(P, {\bf D})$ is $w$-bounded. In the remainder let $d(e)$ denote the inversive distance across an edge $e$ in $(P, {\bf D})$, $d(e(t))$ denote the inversive distance across the edge in $(P, {\bf D}(t))$, and $w(e)$ denote the desired inversive distance.

We begin with a $w$-bounded polyhedron $(P, {\bf D})$. Select an edge $e$ for which $d(e) > w(e)$. We apply a motion called a flow (defined below) to $(P, {\bf D})$ that strictly decreases the inversive distance along $e$ while maintaining the inversive distance across every other edge. In order to mod out the Möbius transformations from this motion, we pin three disks corresponding to a face of $ijk \in P$. This choice necessarily fixes the inversive distances across the edges $ij$, $jk$, and $ki$. Therefore we choose $ijk$ so that $e$ is not one of its edges.

With three disks pinned, the configuration of ${\bf D}$ is described by a point in $\mathbb{R}^{4n-12}$. Let $J_u$ be the unmarked rigidity matrix of $(P, {\bf D})$ with respect to $ijk$. Since $J_u$ is non-singular, its inverse exists. Let ${\bf D}':=J_u^{-1}{\bf v}_e$ where ${\bf v}_e$ is the vector whose entries are all 0 except that the entry corresponding to the edge $e$ is -1. ${\bf D}'$ defines a velocity field on the $n-3$ non-fixed disks of $(P, {\bf D})$ that keeps all edge inversive distances fixed save the free edge $e$, which has an inversive distance that decreases at a constant rate. Since $J_u$ is non-singular at $(P, {\bf D})$ it is non-singular over a Zariski open subset $U$ of $\mathbb{R}^{4n-12}$ and thus defines a smooth velocity field over $U$ including $(P, {\bf D})$ that we use to set up a system of ordinary differential equations (ODEs). 

Starting at $(P, {\bf D})$ and integrating forward in time obtains a maximal trajectory $(P, {\bf D}(t))$ for some time interval $[0, T)$ where $T\in\mathbb{R}\cup\{\infty\}$, which we call the \term{flow trajectory}. By the standard theory of ODEs this trajectory will either leave any compact set in $U$ or continue for infinite time. 

Since the inversive distance $d(e(t))$ decreases at a constant rate while all other edge inversive distances are fixed, at some finite time $t'\in [0, T)$ we will either achieve an inversive distance of $w(e)$ along the edge $e$ in $(P, {\bf D}(t'))$ or $T$ is finite and $(P, {\bf D}(T)) \not\in S_P^{-e}$. In the former case, we flow along the motion until from $t = 0$ to $t'$ to obtain a new polyhedron $(P, {\bf D}(t'))$ at which point $d(e(t')) = w(e)$. In the latter case $(P, {\bf D}(T))$ is either not strictly shallow, not strictly convex, or some disk has degenerated to a point. We now add conditions necessary to ensure that this latter case does not occur. 

\begin{Lemma}\label{lem:flowdefined}
Let $(P, w)$ be a strictly shallow weighted abstract triangulated polyhedron, not a tetrahedron, and let $(P, {\bf D})$ be a strictly shallow strictly convex geodesic circle polyhedron that is $w$-bounded. Let $e$ be an edge such that $d(e) > w(e)$ and $ijk$ be a pinned face that is not bounded by $e$.

Suppose further that if $e_1$, $e_2$, $e_3$ form a closed loop of edges such that $\sum_{k=1}^3 \arccos w(e_k) \geq \pi$ then $e_1$, $e_2$, and $e_3$ bound a face of $P$.

Then at a finite $t'$ the flow reaches a point of the trajectory such that $d(e(t')) = w(e)$ and for all $0\leq t'' \leq t'$ $(P, {\bf D}(t''))$ is in $S_P^{-e}$.  
\end{Lemma}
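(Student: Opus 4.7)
The plan is to show that the flow trajectory $(P, {\bf D}(t))$ stays inside $S_P^{-e}$ for all $t$ up to a finite time $t'$ at which $d(e(t'))=w(e)$, by ruling out every mechanism that could force it to leave. The preservation theorems from Section~\ref{sec:motions} do the heavy lifting, and the lemma's hypotheses are exactly what is needed to invoke them.

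First, I would verify that the flow is strictly shallow-constrained for as long as it exists. Along $e$, $d(e(t))$ changes monotonically at a nonzero rate set by the $e$-entry of ${\bf v}_e$ and remains $\geq w(e)\geq 0$ until it hits $w(e)$; along every other edge $e'$, the flow by construction holds $d(e'(t))$ constant at its initial value, which is $\geq w(e')\geq 0$ by $w$-boundedness. Strict shallowness of $(P, w)$ ensures every 4-cycle in $E(P)$ contains some edge with $w>0$, which therefore contributes a strictly positive entry to the inversive-distance profile at every time $t$, whether that edge is $e$ itself or not. Hence the motion is strictly shallow-constrained. Theorem~\ref{thm:geodesicpreservation} then preserves the induced geodesic triangulation, Theorem~\ref{thm:convexpreservation} preserves strict convexity, and Lemma~\ref{lem:unmarkednonsingular} in turn keeps $J_u$ non-singular so the vector field $J_u^{-1}{\bf v}_e$ remains smooth along the trajectory.

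Next, I would invoke Theorem~\ref{thm:katconditionsnocollapse} to rule out any disk's boundary vanishing to a point. The pinned face $ijk$ supplies the three fixed disks required there, and the 3-cycle angle-sum condition is precisely the hypothesis given in this lemma. The 4-cycle condition of Theorem~\ref{thm:katconditionsnocollapse} is vacuous here: $\sum_{i=1}^4 \arccos d(e_i(t))=2\pi$ forces each $d(e_i(t))=0$, but the 4-cycle argument of the previous paragraph rules out this possibility throughout the trajectory, including in any limit. Combined with the bounded spherical parametrization of disks on $\mathbb{S}^2$, this traps the trajectory in a compact subset of the open region of strictly shallow, strictly convex, geodesic configurations whose non-$e$ inversive distances match those of $(P, {\bf D})$.

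Finally, standard ODE extension gives that the flow extends to every finite time on which it remains in this compact set, and since $d(e(t))$ evolves at a fixed nonzero rate, one has $d(e(t'))=w(e)$ at a finite time $t'$. The main obstacle is the bookkeeping that connects the lemma's lone 3-cycle hypothesis to the two conditions required by Theorem~\ref{thm:katconditionsnocollapse}; the delicate point is recognizing that strict shallowness of $(P, w)$ silently makes the 4-cycle condition vacuous, and that $w$-boundedness together with constancy of non-$e$ inversive distances is precisely what propagates this vacuity through the entire motion so that every preservation theorem in Section~\ref{sec:motions} applies at every time $t\in[0,t']$.
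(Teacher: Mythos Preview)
Your proposal is correct and follows essentially the same approach as the paper: verify strict shallowness is maintained along the trajectory, apply Theorems~\ref{thm:geodesicpreservation} and~\ref{thm:convexpreservation} to preserve the geodesic and strictly convex structure, invoke Theorem~\ref{thm:katconditionsnocollapse} (with the 4-cycle condition vacuous by strict shallowness) to rule out disk degeneration, and conclude from the constant rate of decrease of $d(e(t))$. If anything, you are slightly more careful than the paper in spelling out why the 4-cycle condition is vacuous via $w$-boundedness and in making the ODE compactness/extension step explicit.
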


In other words, if we have the first condition from the Koebe-Andre'ev-Thurston theorem to our desired weight function $w$, then our flow trajectory can be used to correct any edge not yet matching our desired weight. (The second condition from the Koebe-Andre'ev-Thurston theorem on 4-cycles of edges does not apply in this case because our polyhedra are strictly shallow. For a four-cycle of edges to have overlaps summing to $2\pi$ when all overlap angles are between $0$ and $\pi/2$ requires that all overlap angles be $\pi/2$ and thus not strictly shallow.)
\begin{proof}
Let $[0, T]$ be an interval of the flow trajectory in which $d(e(t)) \geq w(e)$ for all $t\in [0, T]$. We start strictly shallow, and we maintain all inversive distances except along $e$, which itself remains strictly shallow (since it is decreasing and $d(e(t)) > w(e) > 0$). Therefore $(P, {\bf D}(t))$ is strictly shallow for all $[0, T]$. Then by theorems~\ref{thm:geodesicpreservation} \& \ref{thm:convexpreservation}, the trajectory remains geodesic and strictly convex. 

Then, by our extra condition that closed loops of three edges having angle sums of $\geq \pi$ must bound a face, and the fact that the only inversive distance that is changing is along $e$, condition 1 of theorem~\ref{thm:katconditionsnocollapse} is maintained for $t\in[0, T]$, and therefore no disk vanishes throughout the motion. (Condition 2 is maintained vacuously because our polyhedra and desired weights $w$ are strictly shallow.) 

Then $(P, {\bf D}(t))$ is in $S_P^{e-}$ for every finite interval maintaining $d(e(t)) \geq w(e)$. But the $d(e(t))$ is decreasing at a constant rate as $t$ increases. Therefore there must be a finite time $t'$ such that $d(e(t')) = w(e)$.
\end{proof}

We now prove the Koebe-Andre'ev-Thurston theorem for strictly shallow circle polyhedra. In the next section, we extend this to the full Koebe-Andre'ev-Thurston theorem. 

\begin{Theorem}[Strictly shallow Koebe-Andre'ev-Thurston]\label{thm:strictlyshallowkat}
	Let $(P, w)$ be a strictly shallow abstract triangulated polyhedron, not a tetrahedron, and let the following condition hold. If $e_1$, $e_2$, $e_3$ form a closed loop of edges with $\sum_{i=1}^3 \arccos w(e_i) \geq \pi$, then $e_1$, $e_2$, and $e_3$ bound a face of $P$. 
	Then there exists a geodesic circle polyhedron $(P, \mathbf{D})$ realizing $(P, w)$. 
\end{Theorem}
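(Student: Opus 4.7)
The plan is to start from the tangency packing guaranteed by TCPT and then invoke Lemma~\ref{lem:flowdefined} repeatedly, adjusting one inversive distance at a time until every edge realizes its prescribed weight.

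By TCPT applied to $P$, there is a strictly convex geodesic circle polyhedron $(P,{\bf D}_0)$ with $d(e)=1$ for every $e\in E(P)$. It is strictly shallow because all edge inversive distances equal $1$, and it is $w$-bounded because $w(e)\leq 1$ for every edge. Enumerate $E(P)=\{e_1,\dots,e_m\}$ with $m=3n-6$ and proceed by induction. Suppose inductively that $(P,{\bf D}_{i-1})$ is a strictly shallow, strictly convex, geodesic, $w$-bounded circle polyhedron with $d(e_j)=w(e_j)$ for $j<i$ and $d(e_j)=1$ for $j\geq i$. If $w(e_i)=1$, set ${\bf D}_i:={\bf D}_{i-1}$. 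Otherwise $d(e_i)=1>w(e_i)$, and since $P$ is not a tetrahedron it has at least $2n-4\geq 6$ faces, leaving at least four faces of $P$ not bounded by $e_i$; pick one as the pinned face $q_i$. The hypotheses of Lemma~\ref{lem:flowdefined} are then satisfied at $(P,{\bf D}_{i-1})$ with free edge $e_i$ and pinned face $q_i$, so the lemma yields a finite time $t_i'$ at which $d(e_i)$ has decreased along the flow trajectory to $w(e_i)$; set ${\bf D}_i:={\bf D}(t_i')$.

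The inductive invariants carry through. By Lemma~\ref{lem:flowdefined}, $(P,{\bf D}_i)\in S_P^{-e_i}$, hence is strictly convex and geodesic. Since the flow alters only the inversive distance across $e_i$, every other edge distance is preserved, giving $d(e_j)=w(e_j)$ for $j\leq i$ and $d(e_j)=1$ for $j>i$, which in particular preserves $w$-boundedness. For strict shallowness, note that every edge distance in $(P,{\bf D}_i)$ lies in $\{w(e_1),\dots,w(e_i),1\}$, so any edge with $d=0$ has weight $0$; a $4$-cycle of zero-inversive-distance edges would then be a $4$-cycle of zero-weight edges in $(P,w)$, contradicting the hypothesis that $(P,w)$ is strictly shallow. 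After $m$ steps, $(P,{\bf D}_m)$ is a geodesic circle polyhedron realizing $d(e)=w(e)$ for every $e\in E(P)$, proving the theorem.

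The main difficulty in this argument has already been absorbed into Lemma~\ref{lem:flowdefined}, which in turn rests on Theorems~\ref{thm:geodesicpreservation} and \ref{thm:convexpreservation} (preservation of geodesicity and strict convexity along strictly shallow-constrained motions), Theorem~\ref{thm:katconditionsnocollapse} (non-collapse of disk radii under the angle-sum condition), and the non-singularity of the unmarked rigidity matrix via the Cauchy-index machinery of Section~\ref{sec:infrigidity}. Once those tools are in hand, the present theorem is just the clean edge-by-edge synthesis sketched above.
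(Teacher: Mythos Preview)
Your proof is correct and follows essentially the same edge-by-edge flow strategy as the paper's own proof; you have simply made the inductive invariants (strict shallowness, strict convexity, geodesicity, $w$-boundedness) more explicit and added the observation that a pinned face avoiding $e_i$ exists because $P$ is not a tetrahedron. Your separate verification of strict shallowness via the $4$-cycle argument is technically redundant since membership in $S_P^{-e_i}$ already encodes it, but it does no harm and clarifies why the induction closes.
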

\begin{proof}
	Start with the geodesic, strictly convex tangency circle polyhedron $(P, {\bf D})$ with the combinatorics $P$ guaranteed by the tangency circle packing theorem. Since all inversive distances in $(P, {\bf D})$ are 1,  $(P, {\bf D})$ is $w$-bounded.
	 
	We now produce a finite sequence of polyhedra $(P, {\bf D}_1), (P, {\bf D}_2, \dots, (P, {\bf D}_k)$ such that $(P, {\bf D}_1) = (P, {\bf D})$ and $(P, {\bf D}_k)$ realizes $(P, w)$ and each of which agrees with $(P, w)$ on more edge than the previous polyhedron in the sequence. Let $e$ be an edge of $(P, {\bf D}_i)$ such $d(e) > w(e)$. By our hypothesis (which is maintained inductively), by lemma~\ref{lem:flowdefined} we may apply the flow $(P, {\bf D}_i(t))$ to continuously decrease $d(e(t))$ until it equals $w(e)$, thus obtaining the next polyhedron $(P, {\bf D}_{i+1})$ which agrees with $w$ on one more edge (namely $e$). In the worst case, we must apply this operation $|E(P)|$ times to correct one edge at a time until they all match $w$.  
\end{proof}

\paragraph{Remark.} As pointed out in a personal correspondence by Steven Gortler it is likely that the ingredients above may be used to flow all disks from tangency to their desired inversive distances nearly simultaneously. First select a single triangle of $P$ and flow to obtain the desired inversive distances along its edges. Then fix this triangle and flow the remaining disks. We leave the details of this approach to future work. 

\subsection{Obtaining non-strictly shallow polyhedra using flows}

{\em Proof of Koebe-Andre'ev-Thurston.} We now use a limiting argument to obtain the proof of the full Koebe-Andre'ev-Thurston theorem. Let $(P, w)$ be an abstract weighted triangulated polyhedron that is shallow, but not strictly shallow and satisfies the extra conditions of KAT. Then there is some 4-cycle of edges $ij$, $jl$, $lk$, $ki$ bounding faces $ijk$ and $kjl$ such that $w(ij) = w(jl) = w(lk) = w(ki) = 0$. Let $w_1:E(P)\rightarrow\mathbb{R}^+$ be the weight function defined by $w_1(ij) = w_1(ki) = 0$, $w_1(jk)=w(jk)$, and $w_1(e)=1$ for all other edges. Then $(P, w_1)$ is a strictly shallow weight function and by theorem~\ref{thm:strictlyshallowkat} we obtain a strictly shallow strictly convex geodesic realization $(P, {\bf D}_1)$. By construction, this realization now has the correct desired inversive distances along the edges of the face $ijk\in F(P)$. 

Now, for any fixed $\epsilon > 0$ let $w_\epsilon$ denote the function such that $w_\epsilon(ij) = w_\epsilon(ki) = 0$, $w_\epsilon(jk) = 1$ and for all other edges $e$, $w_\epsilon(e) = \max\{\epsilon, w(e)\}$. Consider the sequence of weighted abstract polyhedra $(P, w_1), (P, w_{1/2}), (P, w_{1/4}), \dots$. Starting with $(P, \mathbf{D}_1)$ we produce a sequence of circle polyhedra $(P, {\bf D}_1), (P, {\bf D}_{1/2}), (P, {\bf D}_{1/4}), \dots$ as follows. Each $(P, {\bf D}_{1/2^{i+1}})$ is obtained from the previous $(P, {\bf D}_{1/2^i})$ by pinning the face $ijk$ and flowing across each of the remaining edges that are not yet at their correct inversive distance. Thus, between each consecutive pair of polyhedra in the sequence, we have a continuous motion of disks which is geodesic, strictly shallow, and strictly convex. Throughout the motion each inversive distance is (non-strictly) decreasing. Furthermore, for any edge $e$ such that $w(e) > 0$, after finitely many polyhedra in the sequence the edge $e$ achieves $w(e)$ as its inversive distance which is then held constant throughout the rest of the motion. For the edges where $w(e)=0$ (that are not $ij$ or $ki$) the inversive distances approach $0$ as the motion continues.

By the Bolzano-Weierstrass theorem, there is a convergent subsequence of this motion that converges to a set of disks $D_v'$ for each $v\in V(P)$. If we require that $(P, w)$ satisfies conditions 1 and 2 of the Koebe-Andre'ev-Thurston theorem, then our motion satisfies conditions 1 and 2 of theorem~\ref{thm:katconditionsnocollapse} and thus no disk vanishes. Therefore every disk $D_v'$ is a real disk and for every edge $uv\in E(P)$ we must have that $d(D_u', D_v') = w(uv)$.  

This completes the proof of theorem~\ref{thm:kat}.
\qed
\section{Conclusion}\label{sec:conclusion}

This paper presents a new proof of the general Koebe-Andre'ev-Thurston theorem which extends the ideas of Connelly and Gortler's proof in the tangency case to handle circle packings with overlaps. Still open are the problems of existence (unsolved) and uniqueness (solved only for non-unitary convex circle polyhedra) of circle packings with mixes of overlapping and non-overlapping circles. Unlike the overlap packings of the Koebe-Andre'ev-Thurston theorem, such polyhedra need not be convex nor geodesic. That said, however, the flow trajectory can, in fact, be used to produce packings with a mix of overlaps and non-overlaps, both for convex geodesic circle polyhedra and circle polyhedra that are neither convex nor geodesic. An example is shown in figure~\ref{fig:algorithmrun2}. This is an intriguing experimental observation and we hope that the present work may serve as a further launching off point in the effort to characterize the existence and uniqueness of circle packings on the sphere.

Also, in this paper we have made extensive use of two related notions: geodesy and convexity. The Ma-Schlenker counter-examples to the global rigidity of general inversive distance circle packings give rise to a family of examples (explored in \cite{Bowers2017}) that are non-convex and have realizations that are geodesic, as well as realizations that are non-geodesic. Contrary to such examples, our investigation in the present paper has shown that in some cases geodesic circle polyhedra and convex circle polyhedra are intimately related. In some sense this seems to connect to the all of a convex Euclidean polyhedra are visible to any point on its interior, whereas some points on the interior of a non-convex Euclidean polyhedra may see all of the polyhedra, but there always exist some points on the interior for which some of the polyhedron is obstructed. We leave the reader with one further intriguing question: is it the case that a circle polyhedron is convex if and only if all Möbius transformations of the polyhedron are geodesic? This would align with both the Ma-Schlenker examples and much of the development in this paper, but remains an open question.

\begin{figure}[hbt!]
  \centering
  \begin{minipage}[b]{0.3\textwidth}
  	\includegraphics[width=\textwidth]{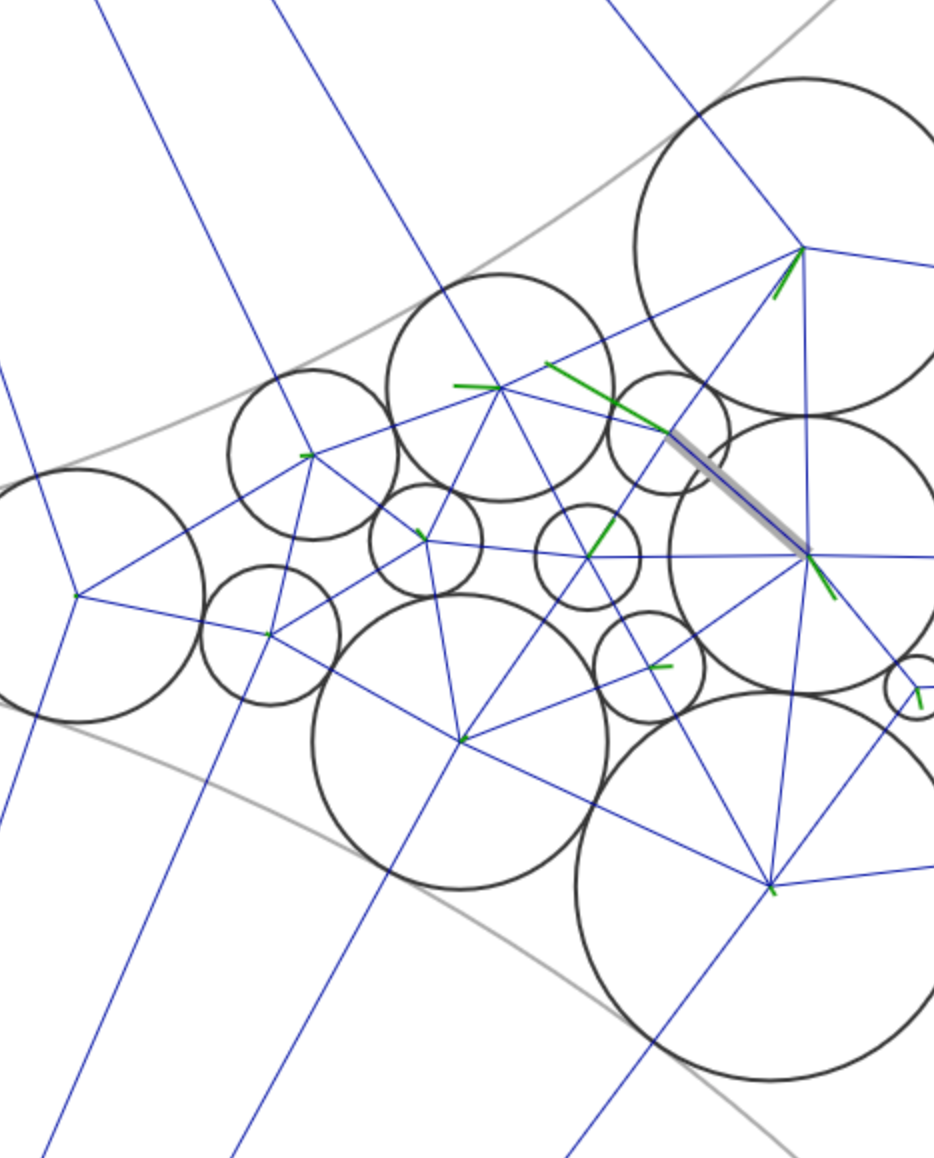}
	\subcaption{}
  \end{minipage}
  \begin{minipage}[b]{0.3\textwidth}
	  \includegraphics[width=\textwidth]{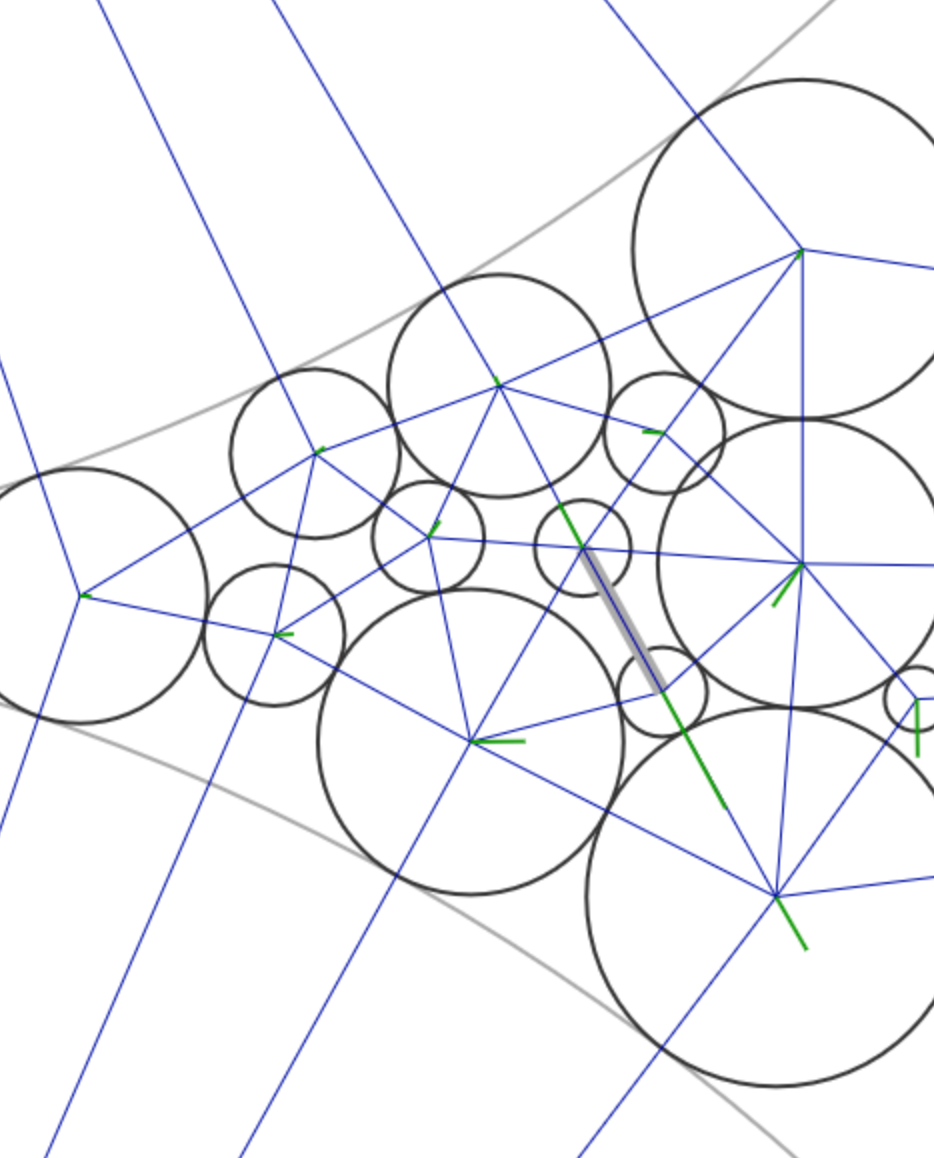}
	  \subcaption{}
  \end{minipage} \\
  \begin{minipage}[b]{0.3\textwidth}
  	\includegraphics[width=\textwidth]{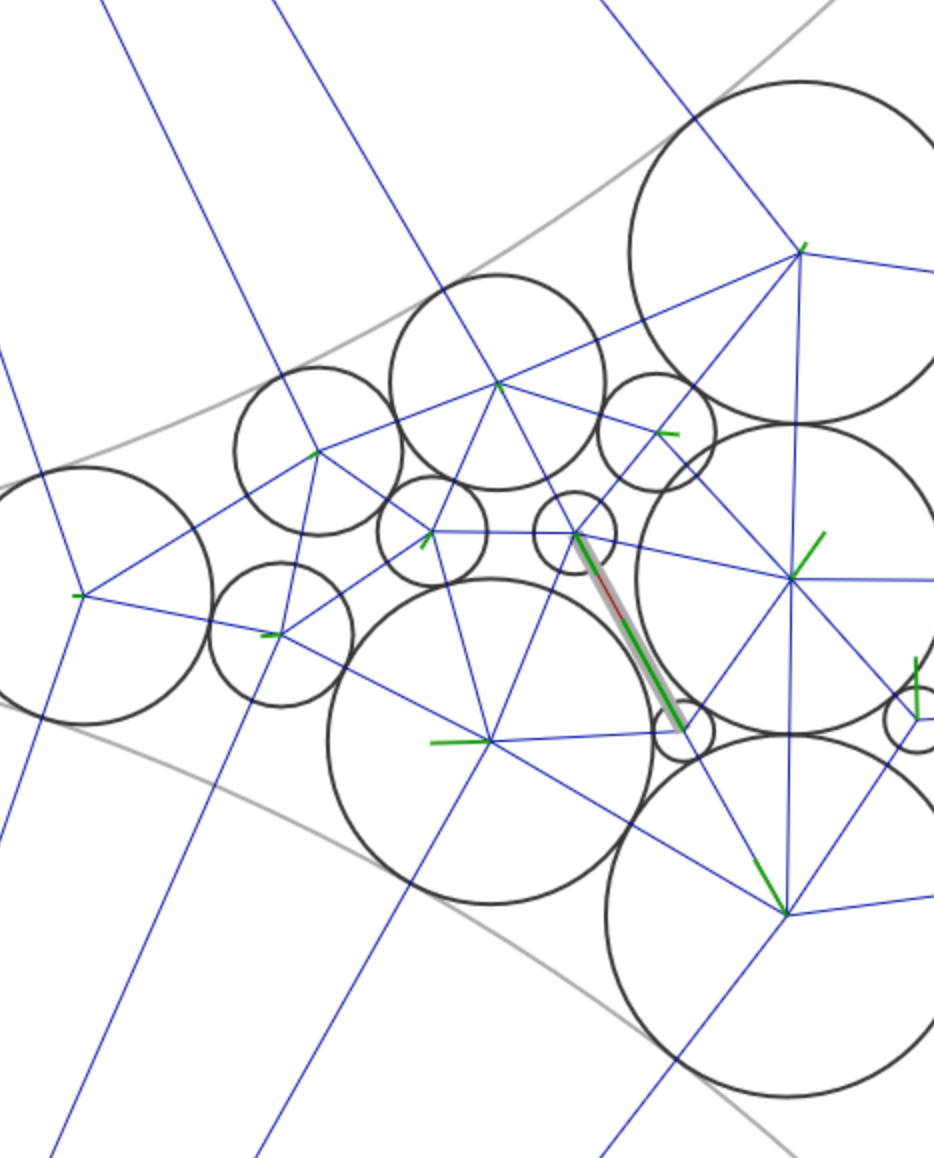}
	\subcaption{}
  \end{minipage}
  \begin{minipage}[b]{0.3\textwidth}
	  \includegraphics[width=\textwidth]{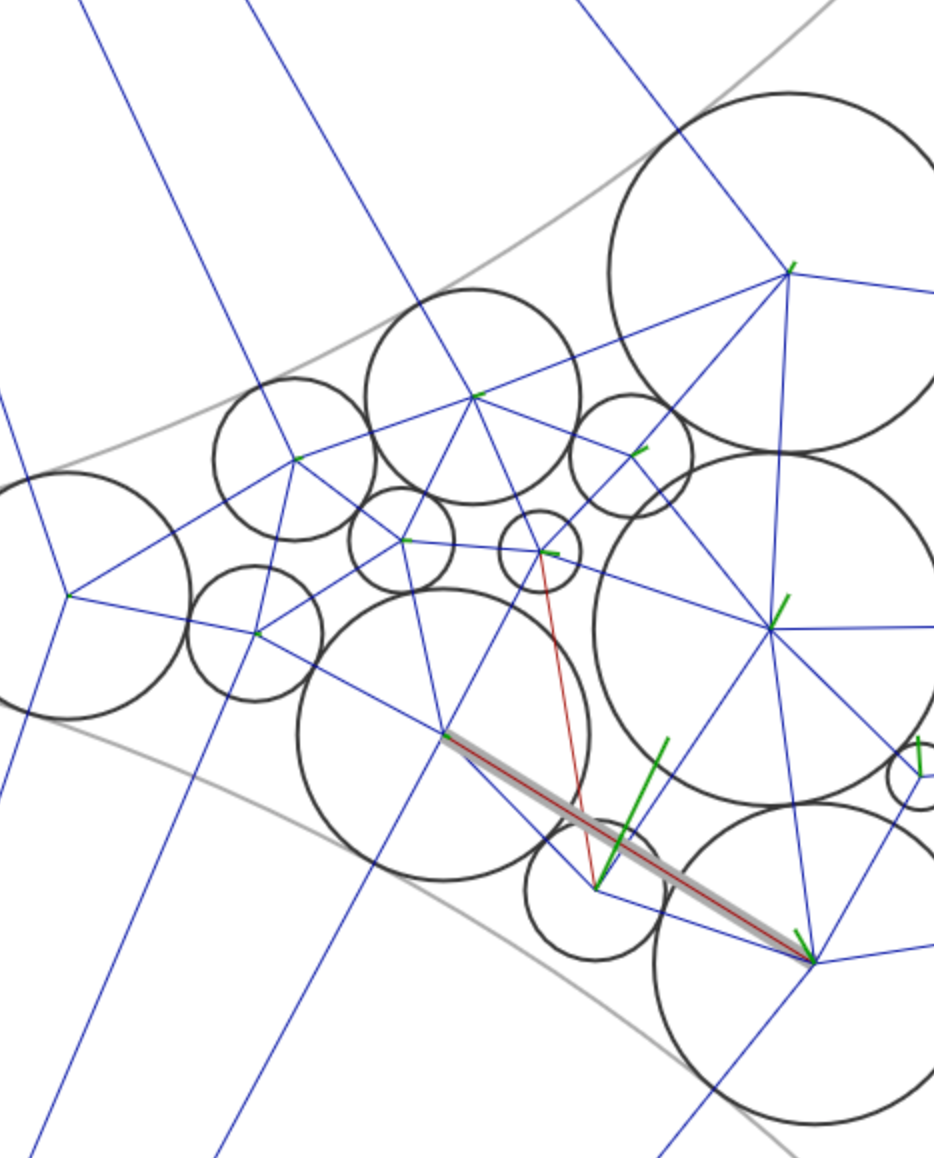}
	  \subcaption{}
  \end{minipage} \\
  
  \caption{Our proof guarantees that we can move along the flow trajectory to obtain any desired shallow packing, but do not rule out using the flow to obtain packings with non-shallow inversive distances. This figure shows the application of a flow to obtain a circle polyhedron with inversive distances that are shallow and inversive distances that are larger than $1$. (a) is a configuration obtained numerically by the the flow. The gray edge represents our free edge and the green vectors from the center of each circle represent the vector of the motion that is maintaining all other inversive distances. In (b), (c), and (d) we continue to apply the flow on the same edge. In (b), the polyhedron is still convex and geodesic. In (c), the polyhedron has ceased being convex (we would need to flip the gray edge at an intermediate point between (b) and (c) to maintain convexity. In (d) the polyhedron is both non-convex and non-geodesic as the center of one circle has moved through the edge of one of its link triangles. }
  \label{fig:algorithmrun2}
\end{figure}

\paragraph{Acknowledgements.} We especially thank Philip Bowers for a great many helpful conversations, challenges, and encouragement during the development of the present work. We also very much thank Bob Connelly for the invitation to Cornell in the fall of 2019 that led the author to begin developing code for experimenting with flip-and-flow and lead eventually to this paper.
We thank Ken Stephenson for pointing out a Bolzano-Weierstrass based proof of the Ring Lemma for tangency packings which informed our analysis of disk motions. We also thank Steven Gortler, Tony Nixon, and Bernd Schulze for helpful conversations and guidance. Finally, we thank both Brown University's Institute for Computational and Experimental Research Mathematics (ICERM) and the American Institute of Mathematics (AIM) and the organizers for hosting meetings on circle packing and rigidity theory that indirectly led to the development of this paper. 

\bibliography{OurBib}{}
\bibliographystyle{plain}

\appendix

\end{document}